\declaretheorem[numberwithin=section]{theorem}
\declaretheorem[sibling=theorem]{corollary}
\declaretheorem[sibling=theorem]{lemma}
\declaretheorem[sibling=theorem]{proposition}
\declaretheorem[sibling=theorem]{claim}
\declaretheorem[style=definition]{definition}
\declaretheorem[style=remark,numberwithin=section]{remark}
\declaretheorem[style=definition,numberwithin=section]{example}
\declaretheorem[style=definition,numberwithin=section]{notation}
\declaretheorem[style=definition,numberwithin=section]{convention}
\declaretheorem[style=definition,numberwithin=section]{question}
\declaretheorem{conjecture}
\newtheorem*{theorem*}{Theorem}
\newtheorem*{lemma*}{Lemma}
\newtheorem*{proposition*}{Proposition}
\newtheorem*{conjecture*}{Conjecture}
\newtheorem*{corollary*}{Corollary}
\newtheorem*{problem*}{Problem}
\renewcommand{\P}{\mathbb P}
\newcommand{\R}{\mathbb R}
\newcommand{\Z}{\mathbb Z}
\newcommand{\bv}{\mathbf v}
\newcommand{\cO}{\mathcal O}
\DeclareMathOperator{\Eff}{Eff}
\DeclareMathOperator{\mult}{mult}
\DeclareMathOperator{\Sec}{Sec}
\DeclareMathOperator{\Effb}{\smash{\overline{\Eff}}\vphantom{\Eff}}
\DeclareMathOperator{\CEffb}{C\!\Effb}
\DeclareMathOperator{\ev}{ev}
\DeclareMathOperator{\Bl}{Bl}
\newcommand{\abs}[1]{\left\lvert #1 \right\rvert}
\newcommand{\set}[1]{\left\{ #1 \right\}}
\begin{document}

\title[Effective cycles on blow-ups of $\mathbb P^n$]{Effective cones of cycles \\ on blow-ups of projective space}

\author[I. Coskun]{Izzet Coskun}
\address{Department of Mathematics, Statistics and CS \\ University of Illinois at Chicago, Chicago, IL 60607}
\email{coskun@math.uic.edu}

\author[J. Lesieutre]{John Lesieutre}
\address{Department of Mathematics, Statistics and CS \\ University of Illinois at Chicago, Chicago, IL 60607}
\email{jdl@uic.edu}

\author[J. C. Ottem]{John Christian Ottem}
\address{Department of Mathematics, University of Oslo, Box 1053, Blindern, 0316 Oslo, Norway}
\email{johnco@math.uio.no}

\thanks{During the preparation of this article the first author was partially supported by the NSF CAREER grant DMS-0950951535 and NSF grant DMS-1500031, the second author was partially supported by an NSF RTG grant, and the third author was supported by RCN grant 250104.}

\subjclass[2010]{Primary: 14C25, 14C99. Secondary: 14E07, 14E30, 14M07, 14N99}
\keywords{Cones of effective cycles, higher codimension cycles, blow-ups of projective space, Mori Dream Space}

\begin{abstract}
In this paper, we study the cones of higher codimension (pseudo)effective cycles on point blow-ups of projective space. We determine bounds on the number of points for which these cones are generated by the classes of linear cycles, and for which these cones are finitely generated.  Surprisingly, we discover that for (very) general points, the higher codimension cones behave better than the cones of divisors. For example, for the  blow-up  $X_r^n$ of $\P^n$, $n>4$, at $r$ very general points, the cone of divisors is not finitely generated as soon as $r> n+3$, whereas the cone of curves is generated by the classes of lines if $r \leq 2^n$. In fact, if $X_r^n$ is a Mori Dream Space then all the effective cones of cycles on $X_r^n$ are finitely generated.
\end{abstract}

\maketitle

\section{Introduction}

In recent years, the theory of cones of cycles of higher codimension has been the subject of increasing attention \cite{ChenCoskun}, \cite{DELV}, \cite{DebarreJiangVoisin}, \cite{FulgerLehmannCone}, 
\cite{FulgerLehmannKernel}.  However, these cones have been computed only for a very small number of examples, mainly because  the current theory is hard to apply in practice. The goal of this paper is to provide some much-needed examples.

Let $\Gamma$ be a set of $r$ distinct points on $\P^n$.  Let $X_{\Gamma}^n$ denote the blow-up of $\P^n$ along  $\Gamma$. When $\Gamma$ is a set of $r$ very general points, we denote $X_{\Gamma}^n$ by $X_r^n$. For a smooth variety $Y$, we write $\Effb^k(Y)$ for the pseudoeffective cone of codimension-$k$ cycles on $Y$, and $\Effb_k(Y)$ for the pseudoeffective cone of dimension-\(k\) cycles.  In this paper, we study the cones $\Effb_k(X_{\Gamma}^n)$ when the points of $\Gamma$ are either in linearly general or very general position.  We also investigate the cones when $\Gamma$ contains points in certain special configurations.

Cones of positive divisors on $X_{\Gamma}^n$ provide an important source of examples in the study of positivity.  These cones are particularly attractive since they have concrete interpretations in terms of subvarieties of projective space, yet still have very complicated structure.  However, even the cones of divisors on blow-ups of \(\P^2\) at \(10\) or more points are far from well-understood, and several basic questions remain open, including the Nagata \cite{Nagata} and Segre--Harbourne--Gimigliano--Hirschowitz (SHGH) conjectures \cite{Gimigliano}, \cite{Harbourne}, \cite{Hirschowitz}.  We expect the cones of higher codimension cycles on $X_{\Gamma}^n$ to be an equally rich source of examples. 

Surprisingly, these cones are simpler than one might expect.  Effective cones of low-dimensional cycles are generated by the classes of linear spaces for \(r\) well into the range for which $X_r^n$ ceases to be a Mori Dream Space. For example,  $\Effb_1(X_r^n)$ is generated by classes of lines for \(r \leq 2^n\) even though $\Effb^1(X_r^n)$ is not finitely generated for $r\geq n+4$ when $n \geq 5$.   We now describe our results in greater detail.

\begin{definition}
We say that $\Effb_k(X_{\Gamma}^n)$ is \emph{linearly generated} if it is the cone spanned by the classes of $k$-dimensional linear spaces in the exceptional divisors and the strict transforms of  $k$-dimensional  linear subspaces of \(\P^n\), possibly passing through the points of \(\Gamma\).  We say  $\Effb_k(X_{\Gamma}^n)$ is \emph{finitely generated} if it is a rational polyhedral cone.
\end{definition}

\begin{theorem*}[\ref{lingen}]
Let $\Gamma$ be a set of $r$ points in $\P^n$ in linearly general position.  If $r \leq \max\left(n+2, n + \frac{n}{k}  \right)$,  then $\Effb_k(X_{\Gamma}^n)$ is linearly generated.
\end{theorem*}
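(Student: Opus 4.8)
We work with the numerical groups $N_k(X_\Gamma^n)$. Let $\pi\colon X_\Gamma^n \to \P^n$ be the blow-up over the points $p_1,\dots,p_r$ of $\Gamma$, let $E_1,\dots,E_r$ be the exceptional divisors, and $H=\pi^*\cO_{\P^n}(1)$. For $1\le k\le n-1$ the group $N_k$ is freely generated by the class $h_k$ of the strict transform of a $k$-plane disjoint from $\Gamma$ together with the classes $e_{i,k}$ of a $k$-plane contained in $E_i\cong\P^{n-1}$; dually $N_{n-k}$ has basis $h_{n-k},e_{1,n-k},\dots,e_{r,n-k}$, and the intersection pairing is diagonal with $h_k\cdot h_{n-k}=1$, $e_{i,k}\cdot e_{i,n-k}=-1$, and all other products zero. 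In this basis the linear generators of the candidate cone $C:=\Effb_k^{\mathrm{lin}}$ are the $e_{i,k}$ and the strict transforms $h_k-\sum_{i\in S}e_{i,k}$ of $k$-planes through subsets $S$ with $|S|\le k+1$; in linearly general position these are exactly the strict transforms of $k$-planes. Since $C\subseteq\Effb_k$ is clear, I would prove the reverse inclusion on the class of an arbitrary irreducible $k$-dimensional $Z$. If $Z\subseteq E_i$ then $[Z]=(\deg Z)\,e_{i,k}\in C$, so I may assume $W:=\pi(Z)$ is $k$-dimensional of degree $d\ge 1$ and write $[Z]=d\,h_k-\sum_i\mu_i e_{i,k}$ with $\mu_i=\mult_{p_i}W=[Z]\cdot e_{i,n-k}\ge 0$.

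Next I would make the facet structure of $C$ explicit. Computing the dual cone one finds that, for a class $d\,h_k-\sum_i\mu_i e_{i,k}$ with $d,\mu_i\ge 0$, membership in $C$ is equivalent to the inequalities $\mu_i\le d$ for all $i$ together with $\sum_{i=1}^r\mu_i\le(k+1)d$; the partial-sum inequalities $\sum_{i\in T}\mu_i\le(k+1)d$ reduce to $\mu_i\le d$ when $|T|=1$ and are otherwise redundant once $\mu_i\ge 0$. (This is the feasibility criterion for distributing total weight $d$ among the $(k+1)$-element subsets so as to cover the demands $\mu_i$, a transportation/scheduling feasibility statement.) Thus it suffices to verify the two inequalities for $Z$. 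The bound $\mu_i\le d$ is immediate: since $H\cdot E_i=0$ and $E_i^{\,j}=(-1)^{j-1}e_{i,n-j}$, one computes $(H-E_i)^k=h_{n-k}-e_{i,n-k}$, and as a product of the nef (indeed basepoint-free) divisors $H-E_i$ this class pairs nonnegatively with every effective cycle, giving $0\le[Z]\cdot(H-E_i)^k=d-\mu_i$.

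The heart of the proof is $\sum_i\mu_i\le(k+1)d$, and this is where the hypothesis $r\le\max\!\bigl(n+2,\,n+\tfrac{n}{k}\bigr)$ enters. The strategy is to produce a movable class $\gamma=(k+1)h_{n-k}-\sum_i c_i e_{i,n-k}$ with all $c_i\ge 1$: for such a class $[Z]\cdot\gamma=(k+1)d-\sum_i c_i\mu_i\ge 0$, and since $c_i\ge 1$ and $\mu_i\ge 0$ this forces $\sum_i\mu_i\le(k+1)d$. Concretely, $\gamma$ should be realized by a family of $(n-k)$-dimensional subvarieties of $\P^n$ of degree $k+1$ passing through all $r$ points and sweeping out $X_\Gamma^n$, so that a general member meets $Z$ properly and the intersection number is a nonnegative count of points. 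Equivalently, and more elementarily, one intersects $W$ with linear spaces of complementary dimension: a general $(n-k)$-plane through a subset $S$ of the points meets $W$ in a length-$d$ scheme, giving $\sum_{i\in S}\mu_i\le d$ whenever $S$ lies on such a plane meeting $W$ properly, and one then sums these estimates over a covering of $\Gamma$ by such subsets.

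The main obstacle is the construction of this covering family of degree-$(k+1)$, $(n-k)$-dimensional subvarieties through all $r$ points, equivalently the bookkeeping that combines the complementary-plane estimates without losing the factor $k+1$. Linearly general position is essential here, guaranteeing that any $\le n-k+1$ of the points span the expected linear space and that the relevant varieties of minimal degree exist. The two terms of $\max\!\bigl(n+2,\,n+\tfrac{n}{k}\bigr)$ reflect two regimes: when $k$ is large relative to $n$ the efficient covering families are cones over rational normal curves (for $k=n-1$ these are the rational normal curves themselves, which pass through $n+3$ general points and sweep out $\P^n$ precisely when $r\le n+2$), while for smaller $k$ one uses balanced rational normal scrolls, or unions of linear subspaces through balanced groups of points, whose existence and covering property persist up to $r\le n+\tfrac{n}{k}$. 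Verifying that these auxiliary families genuinely cover $X_\Gamma^n$ and meet the arbitrary subvariety $Z$ in the expected dimension—in particular excluding the degenerate case in which the auxiliary variety is contained in $W$—is the delicate point, and is exactly where the numerical bound on $r$ is used.
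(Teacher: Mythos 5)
Your reduction of the problem to the two inequalities $\mu_i\le d$ and $\sum_i\mu_i\le(k+1)d$ is correct and matches the paper's setup (Lemma~\ref{maininequality} gives exactly this characterization of the linearly generated cone, and $\mu_i\le d$ follows from intersecting with the base-point-free class $(H-E_i)^k$ as in Lemma~\ref{lem-ab}). But the step you yourself identify as ``the heart of the proof'' --- producing a covering family of $(n-k)$-dimensional, degree-$(k+1)$ subvarieties (scrolls, cones over rational normal curves) through all $r$ points whose general member meets an arbitrary $Y$ properly --- is asserted, not proved, and it is not a routine verification. The paper explicitly isolates this as an open problem (Remark~\ref{remark-scroll} and Question~\ref{ques-scroll}): an affirmative answer would in fact give the \emph{stronger} bound $r\le 2n-k+1$, and the authors only manage to carry it out in the single case $n=4$, $k=2$, $r=7$ (Theorem~\ref{thm-scroll}), at the cost of a long irreducibility analysis of the scroll--secant-variety intersection. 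Your ``more elementary'' fallback via complementary linear sections also fails: $n-k+1$ linearly general points span a \emph{unique} $(n-k)$-plane, so there is no freedom to move it into proper position with respect to $W$, and if $W$ meets that plane in positive dimension the inequality $\sum_{i\in S}\mu_i\le d$ simply does not follow. (A smaller slip: the partial-sum inequality with $\lvert T\rvert=1$ gives $\mu_i\le(k+1)d$, not $\mu_i\le d$; the latter must be imposed separately, as in Lemma~\ref{maininequality}.)

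The paper's actual argument is quite different and avoids constructing any sweeping family. It inducts on $n$ and $k$: order the multiplicities $b_1\ge\cdots\ge b_r$, slice $Y$ with the hyperplane $L$ through the $n$ points of largest multiplicity, apply the toric case $r\le n+1$ (Lemma~\ref{toriccase}) to decompose $[Y\cap L]$ into classes of $(k-1)$-planes, and take cones over those planes through the remaining points to absorb the multiplicities $b_{n+1},\dots,b_r$. The bound $n+\frac{n}{k}$ emerges from the estimate $ak\ge\sum_{i=1}^n b_i\ge nb_n$, hence $b_j\le ak/n$ for $j>n$, so the leftover multiplicity $\sum_{j>n}b_j$ is at most $a$ precisely when $r\le n+\frac{n}{k}$; the case $r=n+2$ is then handled separately using the rational normal curve through $n+3$ points and Lemma~\ref{lem-multiplicityline}. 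None of this machinery appears in your sketch, so as written the proposal has a genuine gap at its central step.
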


There exist configurations of $2n+2-k$ points in linearly general position in $\P^n$ for which $\Effb_k(X_{\Gamma}^n)$ is not linearly generated (see Example \ref{ex-linearSharp}). In particular, Theorem \ref{lingen} is sharp for $1$-cycles.
We expect that this bound can be improved to $r \leq 2n+1-k$, and prove this in the case that  \(\Gamma\) is a very general configuration of points (Theorem \ref{thm-verygenerallingen}). We obtain the following consequence.

\begin{corollary*}[\ref{cor-MDS}]
If $X^n_r$ is a Mori Dream Space, then $\Effb_k(X^n_r)$ is finitely generated.
\end{corollary*}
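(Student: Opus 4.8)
The plan is to reduce the statement to a short, explicit list of pairs $(n,r)$, and then to handle each dimension $k$ by combining three ingredients: the triviality of the extreme cones, the polyhedrality of the divisor cone that is built into the notion of a Mori Dream Space, and the very general linear generation bound of Theorem \ref{thm-verygenerallingen}.

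First I would recall Mukai's classification of the pairs $(n,r)$ for which $X_r^n$ is a Mori Dream Space: beyond the trivial range $r \le n+1$ (where the points may be normalized to coordinate points and $X_r^n$ is toric), this holds exactly when $\tfrac12 + \tfrac1{n+1} + \tfrac1{r-n-1} > 1$. Concretely, a Mori Dream Space requires $r \le n+3$ when $n \ge 5$, $r \le 8$ when $n = 4$, $r \le 7$ when $n = 3$, and $r \le 8$ when $n = 2$. Fix such a pair and let $0 \le k \le n$. The cones $\Effb_0(X_r^n)$ and $\Effb_n(X_r^n)$ are rays, hence rational polyhedral; and $\Effb_{n-1}(X_r^n) = \Effb^1(X_r^n)$ is rational polyhedral because the pseudoeffective cone of divisors on any Mori Dream Space is so.

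It remains to treat $1 \le k \le n-2$, and here I would invoke Theorem \ref{thm-verygenerallingen}: since the $r$ points are very general, $\Effb_k(X_r^n)$ is linearly generated, hence finitely generated, whenever $r \le 2n+1-k$. Checking this inequality against the classification disposes of almost every case: for $n \ge 5$ one has $r \le n+3 \le 2n+1-k$ for all $k \le n-2$, and for $n \le 4$ a direct inspection shows that the only pairs in the Mori Dream range escaping the bound are the two boundary triples $(n,r,k) = (3,7,1)$ and $(4,8,2)$. Notably both of these are \emph{codimension-two} cycles, on the two extremal Mori Dream Spaces $X_7^3$ and $X_8^4$, whose divisor classes correspond to the finite root systems $E_7$ and $E_8$ respectively.

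The main obstacle is thus the direct computation of these two codimension-two cones, precisely the cases in which the lingen bound is missed by one. For these I would exploit the large symmetry: the finite Weyl group acts on the relevant cycle group preserving $\Effb_1(X_7^3)$, respectively $\Effb_2(X_8^4)$, so it suffices to produce finitely many extremal classes whose Weyl orbit spans the cone and to verify that no further extremal rays occur. I expect this to reduce, as in the divisor case, to showing that the cone is the convex hull of the (finite) Weyl orbit of the class of a linear cycle together with the linear cycles contained in the exceptional divisors; establishing that there are no additional generators is the delicate point, and is carried out by an explicit analysis of these two specific varieties. Assembling these cases then yields the Corollary.
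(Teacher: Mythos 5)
Your reduction is the right one and matches the paper's: combine the classification of Mori Dream pairs $(n,r)$ from Theorem~\ref{divisorcase} with the linear-generation bound $r \leq 2n+1-k$ of Theorem~\ref{thm-verygenerallingen}, and observe that only finitely many boundary triples survive. But the way you propose to close the two surviving cases is where the argument breaks down. First, the case $(n,r,k)=(3,7,1)$ does not require any new computation: Proposition~\ref{prop-verygeneralCurve} shows that $\Effb_1(X_r^n)$ is linearly generated for all $r \leq 2^n$, so $\Effb_1(X_7^3)$ is already covered (indeed so is every curve case in the Mori Dream range, since $r \leq n+3 \leq 2^n$ there). You never invoke this proposition, and instead route a case that is already settled into your ``hard'' list.

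The genuine gap is the case $(n,r,k)=(4,8,2)$, and your proposed Weyl-group strategy for it does not work. The paper is explicit on this point in Section~2: the standard Cremona involution has codimension-$2$ indeterminacy on $X_r^n$, so the Coxeter/Weyl group does \emph{not} act on $N^k(X_r^n)$ preserving $\Effb^k$ for any $k>1$ (nor does it preserve extremality of curve classes when $n \geq 3$ --- the strict transform of a line through two points becomes a rational normal curve through $n+3$ points, which is no longer extremal). So ``produce finitely many extremal classes whose Weyl orbit spans the cone'' is not an available move for codimension-$2$ cycles on $X_8^4$. What is actually needed here is the content of Theorem~\ref{thm:8ptsp4}, which the paper proves by a direct geometric argument: one specializes the eight points (e.g.\ onto the secant variety of the rational normal quartic, or onto an intersection of quadric cones), exhibits explicit nef divisor classes or moving curves on the relevant surfaces, and checks that every irreducible surface meets the class $3H_2 - \sum_{i=1}^8 E_{i,2}$ nonnegatively. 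Without that theorem (or an equivalent substitute), your proof of the corollary is incomplete in exactly the one case that is not a formal consequence of the general bounds.
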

In general, Mori Dream Spaces may have effective cones of intermediate dimensional cycles which are not finitely generated; the corollary shows that this does not happen for blow-ups of \(\P^n\). A good example is \cite[Example 6.10]{DELV} attributed to Tschinkel. Let $X_b$ be the blow-up  of $\P^4$ along a smooth quartic K3 surface $Y_b \subset \P^3 \subset  \P^4$. Then $X_b$ is Fano, hence, by \cite{BCHM}, a Mori Dream Space. On the other hand, $\Effb_2(X_b)$ has infinitely many extremal rays when $\Effb_1(Y_b)$ does. Quartic K3 surfaces may have infinitely many $(-2)$-curves or even a round cone of curves. This example also shows that the property of having finitely generated  higher codimension cones can fail countably many times in a family.   

The bounds can be exponentially improved (at least for $1$-cycles) if we assume that $\Gamma$ is a set of very general points.  
\begin{proposition*}[\ref{prop-verygeneralCurve}]
The cone $\Effb_1(X_{r}^n)$ is linearly generated if and only if $r \leq 2^n$.
\end{proposition*}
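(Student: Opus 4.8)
The plan is to dualize the problem and reduce it to the nefness of a single divisor class. Write $H,E_1,\dots,E_r$ for the usual divisor classes on $X_r^n$ and $\ell,e_1,\dots,e_r$ for the dual curve classes (a general line, and a line in $E_i$), and let $L$ denote the cone spanned by the linear classes in the definition. Since $\ell=(\ell-e_i)+e_i$ and $\ell-e_i=(\ell-e_i-e_j)+e_j$, the extremal rays of $L$ are just the $e_i$ and the $\ell-e_i-e_j$. Because $\Effb_1(X_r^n)$ is the Mori cone $\overline{NE}(X_r^n)$ (for $1$-cycles, pseudoeffective is the closure of effective), Kleiman duality gives $\Effb_1(X_r^n)=L$ if and only if $\operatorname{Nef}(X_r^n)=L^{\vee}$; and as $\operatorname{Nef}(X_r^n)\subseteq L^{\vee}$ always, this amounts to showing every extremal ray of $L^{\vee}$ is nef. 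A short computation identifies
\[
L^{\vee}=\bigl\{\,aH-\textstyle\sum_i c_iE_i \ :\ c_i\ge 0,\ c_i+c_j\le a \text{ for } i\neq j\,\bigr\},
\]
with extremal rays $H$, the $H-E_i$, and the ``quadric'' classes $2H-\sum_{i\in S}E_i$ for $\lvert S\rvert\ge 3$. The first two families are basepoint free, and nefness of $2H-\sum_{i\in S}E_i$ for a given $S$ forces it for every subset (the difference is a sum of $E_i$'s, which meet every curve nonnegatively). So the entire statement collapses to: $2H-E_1-\dots-E_r$ is nef on $X_r^n$ if and only if $r\le 2^n$.

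For the direction $r>2^n$ I would simply compute a top self-intersection. Using $H^n=1$, $E_i^n=(-1)^{n-1}$, and the vanishing of all mixed monomials, one finds $(2H-\sum_{i=1}^rE_i)^n=2^n-r$. A nef divisor on an $n$-fold has nonnegative top self-intersection, so $2^n-r<0$ rules out nefness; dualizing back, $\Effb_1(X_r^n)\supsetneq L$, and the cone is not linearly generated. This disposes of $r>2^n$ uniformly, with no input about the position of the points.

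The real content is nefness when $r\le 2^n$, and here the governing number is $2^n=\deg\nu_2(\P^n)$, the number of points cut out by $n$ general quadrics. The key structural fact I would use is that nefness propagates from a special configuration to the very general one: effective curve classes admit flat limits of the same numerical class, so the Mori cone only grows and the nef cone only shrinks under specialization, whence a class nef at one configuration is nef at a very general configuration. It therefore suffices to exhibit a single configuration of $2^n$ points on which $2H-\sum E_i$ is nef. I would take $Z=Q_1\cap\dots\cap Q_n$, a complete intersection of $n$ general quadrics, which by B\'ezout is exactly $2^n$ reduced points. For such $Z$ one has $H^0(\mathcal{I}_Z(2))=\langle Q_1,\dots,Q_n\rangle$, so on $\Bl_Z\P^n$ the system $\lvert 2H-\sum E_i\rvert$ is spanned by the strict transforms of the $Q_i$; their common base locus in $\P^n$ is precisely $Z$, so after blowing up the only possible base points lie on the exceptional divisors. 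Checking these away---the tangent hyperplanes $T_{p_i}Q_1,\dots,T_{p_i}Q_n$ meet only at $p_i$, hence cut out $n$ general hyperplanes with empty common intersection in each exceptional $\P^{n-1}$---shows the system is basepoint free, so $2H-\sum E_i$ is semiample and nef for $Z$. The case $r<2^n$ then follows by adding $2^n-r$ further very general points and descending along the blow-down $X_{2^n}^n\to X_r^n$, under which the class pulls back to itself and nefness descends.

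The step I expect to be the main obstacle is exactly this passage from the complete-intersection configuration to very general points. The basepoint-freeness computation pins down the clean threshold $2^n$ (rather than the naive count $\tfrac12(n^2+n+2)$ of quadrics, which is strictly smaller for $n\ge 3$), but the specialization principle must be made rigorous: one needs the relative Chow scheme over the space of point-configurations, together with the fact that an effective class on the general fiber specializes to an effective class of the same numerical type on the complete-intersection fiber. Verifying that this limiting argument is valid---and that the complete-intersection locus is genuinely a specialization of the very general locus inside an irreducible parameter space---is where the care is required; the two endpoint computations (negative self-intersection for $r>2^n$, basepoint freeness at $Z$ for $r=2^n$) are comparatively routine.
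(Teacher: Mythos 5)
Your proposal follows essentially the same route as the paper: reduce linear generation to nefness of $2H-\sum_{i=1}^r E_i$ (the paper does this via Lemma~\ref{maininequality}), rule out $r>2^n$ by the negative top self-intersection $2^n-r$, and for $r\le 2^n$ specialize to the complete intersection of $n$ general quadrics and invoke semicontinuity of nefness (the paper cites \cite[Prop.\ 1.4.14]{Lazarsfeld} for exactly the step you flag as the main obstacle). The only imprecision is your claim that the $E_i$ ``meet every curve nonnegatively''---false for curves inside $E_i$, since $E_i\cdot e_i=-1$---but the reduction survives because $2H-\sum_{i\in S}E_i$ is nonnegative on curves contained in exceptional divisors anyway.
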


As a consequence of Proposition \ref{prop-verygeneralCurve}, we conclude that $\Effb_k(X_r^n)$ is not linearly generated if $r  \geq 2^{n-k+1} + k$ (Corollary \ref{cor-linear}).  This specializes to the fact that the cone of divisors of $X_r^n$ is not linearly generated as soon as $r > n+2$ (see Theorem \ref{divisorcase}). 

Work of Mukai \cite{Mukai} shows that the cone of divisors of $X_r^n$ is not finitely generated if  \(r \geq n+4\) and $n\geq 5$ (one needs $r\geq 8$ for $n=2,4$ and $r \geq 7$ for $n=3$).  Mukai explicitly constructs  infinitely many extremal divisors on \(\Effb^1(X_{r}^n)\), as the orbit of one of the exceptional divisors under the action of Cremona transformations.  However, in higher codimensions it is more difficult to prove that the corresponding cones  become infinite.

Many questions about cones of higher codimension cycles appear to be intractable, quickly reducing to difficult questions about cones of divisors.  For example, the interesting part of the cone of curves of \(\P^3\) blown up at \(9\) points is given by curves lying on the unique quadric \(Q\) through the \(9\) points.  The blow-up of \(Q\) is isomorphic to the blow-up of \(\P^2\) at \(10\) points, and the curves which are extremal on \(X^3_9\) are certain \(K_Q\)-positive ones contained in \(Q\).  Hence understanding \(\Effb_1(X^3_9)\) requires understanding the \(K_{X^2_{10}}\)-positive part of \(\Effb_1(X^2_{10})\), running immediately into the SHGH conjecture (see Conjecture~\ref{shghstatement}). We are able to show this non-finiteness only for cones of codimension-\(2\) cycles, and then assuming the SHGH conjecture on the cone of curves of \(\P^2\) blown up at \(10\) points.    
\begin{corollary*}[\ref{cor-codim2infinite}]
Assume the SHGH conjecture holds for blow-ups of \(\P^2\) at \(10\) points.  Then \(\Effb^2(X_{r}^n)\) is not finitely generated if $r \geq n+6$ and $n \geq 3$.
\end{corollary*}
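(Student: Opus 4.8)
The plan is to bottom out at the case $n=3$, $r=9$ — where codimension-$2$ cycles are curves, so $\Effb^2 = \Effb_1$ — and to reduce every larger $(n,r)$ to it through two elementary steps. First, if $r > n+6$ I would add points one at a time: the blow-down $\beta\colon X_r^n \to X_{r-1}^n$ at the last point is a morphism, and I would show that $\beta^\ast$ sends extremal rays of $\Effb^2(X_{r-1}^n)$ to extremal rays of $\Effb^2(X_r^n)$, using that $\beta^\ast$ preserves codimension and that $\beta_\ast\beta^\ast = \mathrm{id}$ recovers the original class from any effective decomposition. Second, if $n>3$ I would lower the dimension by projecting from one of the points: the map $\pi\colon \P^n \rat \P^{n-1}$ centered at $p_r$ resolves on the blow-up to a morphism which is generically a $\P^1$-bundle (the blow-up of $\P^n$ at a point is $\P(\cO \oplus \cO(1))$ over $\P^{n-1}$), and the remaining points map to $r-1$ very general points of $\P^{n-1}$. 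Taking preimages — equivalently, coning cycles with the center — carries a codimension-$2$ class on $X_{r-1}^{n-1}$ to a codimension-$2$ class on $X_r^n$. Since $r \geq n+6$ forces $r-1 \geq (n-1)+6$, both operations respect the hypotheses, and iterating reaches $(n,r)=(3,9)$; the bound $r \geq n+6$ is exactly what is needed to leave at least $9$ residual points in $\P^3$.

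For the base case I would exploit the unique quadric. Nine very general points of $\P^3$ lie on a unique smooth quadric $Q \cong \P^1\times\P^1$, and its strict transform $\tilde Q \subset X_9^3$ is the blow-up of $\P^1\times\P^1$ at the nine points, hence isomorphic to $X_{10}^2$. I would then pin down exactly which curves $C \subset \tilde Q$ span extremal rays of $\Effb_1(X_9^3)$: by adjunction $K_{\tilde Q} = (K_{X_9^3}+\tilde Q)|_{\tilde Q}$, and a normal-bundle computation shows that the classes escaping the linearly generated subcone are precisely the $K_{\tilde Q}$-positive ones. Under the SHGH conjecture for $X_{10}^2$ (Conjecture~\ref{shghstatement}), the $K_{\tilde Q}$-positive part of $\Effb_1(X_{10}^2)$ carries an infinite family of extremal curve classes, and I would verify that each such ray remains extremal after being pushed into $\Effb_1(X_9^3)$, using that $Q$ is the only quadric through the points, so that $\tilde Q$ is rigid and no effective decomposition can move the cycle off of it.

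The main obstacle is the preservation of extremality, both in the base case and along the two reductions. Producing infinitely many effective codimension-$2$ classes is routine — one simply cones the infinitely many base-case curves — but showing that these classes are genuine extremal rays of $\Effb^2(X_r^n)$, rather than interior points expressible as positive sums of other effective cycles, is the substantive content. In the base case this is exactly where SHGH and the $K_{\tilde Q}$-positivity are used. Along the reductions I expect to need, for each candidate ray, a separating positive dual class on $X_r^n$ — a nef divisor in the surface-embedded setting, and more generally a suitable nef class — that is pulled back from the lower model, vanishes on the given cycle, and stays nonnegative on every effective codimension-$2$ cycle, thereby forcing the extremal ray to persist upward.
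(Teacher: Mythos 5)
Your overall architecture---bottoming out at \(\Effb_1(X_9^3)\) via the unique quadric \(\tilde Q\cong X_{10}^2\) and the SHGH/de Fernex description of the \(K\)-positive part of its cone of curves, then coning to raise \((n,r)\) together and adding points to raise \(r\) alone---is essentially the paper's (Theorem \ref{extremalpushfwd}, the explicit classes \(D_\delta\), and Lemma \ref{lem-cones}), and your computation that the classes escaping the linear span are exactly the \(K_{\tilde Q}\)-positive curves on \(\tilde Q\) is correct. But the two mechanisms you propose for the step you rightly flag as the crux, preservation of extremality, both have genuine problems.

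In the base case, ``\(\tilde Q\) is rigid, so no effective decomposition can move the cycle off of it'' does not address the real difficulty: \(i_*\colon N_1(\tilde Q)\to N_1(X_9^3)\) has a one-dimensional kernel spanned by the difference \(r_1-r_2\) of the two rulings, and a decomposition of \(i_*D\) may also involve the line classes \(i_*[\ell_{ij}]\), which satisfy \(\tilde Q\cdot\ell_{ij}=0\) and are not effective on \(\tilde Q\). So \(i_*D=\alpha+\beta\) only yields \(D=\alpha_Q+\beta_Q+\sum c_{ij}\ell_{ij}+f(r_1-r_2)\) upstairs. The paper's Theorem \ref{extremalpushfwd} handles this by imposing \(D\cdot r_1=D\cdot r_2\), \(D\cdot\ell_{ij}>0\), and \(D\cdot K_{\tilde Q}>0\) (the last kills the \(\ell_{ij}\) contributions since \(K_{\tilde Q}\cdot\ell_{ij}=0\)), and one must then actually exhibit infinitely many nef, \(K\)-positive, self-intersection-zero classes meeting these symmetry conditions---the paper writes them down explicitly. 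Your ``each such ray remains extremal'' is an overstatement; only the suitably symmetric rays are shown to survive, though that suffices.

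Second, the claim that \(\beta^*\) carries extremal rays to extremal rays for the blow-down \(\beta\colon X_r^n\to X_{r-1}^n\) is false in general: the line class \(H_1\) is extremal on \(\Effb_1(\P^2)\), but its pullback to \(\Bl_p\P^2\) decomposes as \((H_1-E_{1,1})+E_{1,1}\). Your argument via \(\beta_*\beta^*=\mathrm{id}\) only forces \(\alpha_i=c_i\beta^*\gamma+d_iE_{r,k}\) with \(d_1+d_2=0\), and both summands can be pseudoeffective with \(d_i\neq 0\). Fortunately the corollary does not need this statement: \(\beta_*\) maps \(\Effb^2(X_r^n)\) onto \(\Effb^2(X_{r-1}^n)\) (pushforward preserves pseudoeffectivity and \(\beta^*\) is a section), so a rational polyhedral cone upstairs would have rational polyhedral image; hence non-finite-generation propagates upward. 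Replacing your extremality claim by this surjectivity argument repairs that step.
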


Finally, in the last section, we compute $\Effb_k(X_{\Gamma}^n)$ when $\Gamma$ is a set of points in certain special positions. Using these computations, we show that linear and finite generation of $\Effb_k(X_{\Gamma}^n)$ are neither open nor closed in families (see Corollary \ref{cor-closed} and Corollary \ref{cor-open}). This generalizes analogous jumping behavior exhibited for divisors and Mori Dream Spaces to all codimensions.

\subsection*{The organization of the paper} In \S \ref{sec-prelim}, we collect basic facts concerning the cohomology of $X_{\Gamma}^n$, cones of divisors, the action of Cremona transformations, and some preliminary lemmas. In \S \ref{sec-lingen}, we prove Theorem \ref{lingen} and study the linear generation of the cones $\Effb_k(X_{\Gamma}^n)$ when $\Gamma$ is a linearly general set of points. In \S \ref{sec-nonlingen}, we study the linear generation of the cones $\Effb_k(X_{r}^n)$. In \S \ref{sec-nonfingen}, we prove that $\Effb^2(X_{r}^n)$ is not finitely generated for $r \geq n+6$ assuming the SHGH conjecture. In \S \ref{sec-special}, we discuss the cones $X_{\Gamma}^n$ when $\Gamma$ contains points in certain special configurations and study the variation of $\Effb_k(X_{\Gamma}^n)$ in families. 

\subsection*{Acknowledgements} We would like to thank Dawei Chen, Lawrence Ein, Mihai Fulger, Joe Harris, Brian Lehmann, Kristian Ranestad, and Kevin Tucker for helpful discussions on cones of higher codimension cycles.

\section{Preliminaries}\label{sec-prelim}
In this section, we recall basic facts about the cohomology of $X_{\Gamma}^n$ and cones of codimension-$1$ cycles.  We will work over the complex numbers $\mathbb{C}$. 

\subsection*{The cohomology of $X_{\Gamma}^n$}
Let $\Gamma$ be a set of $r$ points $p_1, \dots, p_r$ in $\P^n$, and let $$\pi: X_{\Gamma}^n  = \Bl_{\Gamma} \P^n \rightarrow \P^n $$ denote the blow-up of $\P^n$ along $\Gamma$. Let $H$ denote the pullback of the hyperplane class and let $E_i$ denote the class of the exceptional divisor over $p_i$.  The exceptional divisor $E_i$ is isomorphic to $\P^{n-1}$ and $\cO_{E_i}(E_i) \cong \cO_{\P^{n-1}}(-1)$. Consequently, we have the following intersection formulas:
$$H^n = (-1)^{n-1} E_i^n=1, \quad H \cdot E_i = 0, \quad E_i \cdot E_j =0,\ \  i\not=j.$$
\begin{notation}
In order to simplify notation, we make the convention that $H_k$ is the class of a $k$-dimensional linear space in $\P^n$ and  $E_{i,k}$ is the class of a $k$-dimensional linear space contained in the exceptional divisor $E_i$. We then have the relations $$H^{n-k} = H_k, \quad (-1)^{n-k+1} E_i^{n-k} = E_{i,k}, \quad E_i \cdot E_{i,k} = - E_{i, k-1}.$$
\end{notation}

On $X_{\Gamma}^n$ homological, numerical and rational equivalence coincide.
For $0<k<n$, we write \(N_k(X_{\Gamma}^n)\) for the \(\R\)-vector space of \(k\)-dimensional cycles on \(X_\Gamma^n\), modulo numerical equivalence. Dually, \(N^k(X_{\Gamma}^n)\) denotes the space of codimension-\(k\) cycles modulo numerical equivalence. They are both $(r+1)$-dimensional vector spaces.  

A class in \(N_k(X_\Gamma^n)\) is said to be pseudoeffective if it is the limit of classes of effective cycles. We write \(\Effb_k(X_{\Gamma}^n)\) for the closed convex cone in \(N_k(X_{\Gamma}^n)\) containing pseudoeffective classes.  If \(V\) is an (irreducible) \(k\)-dimensional subvariety of \(X_{\Gamma}^n\), we write \([V]\) for the class of \(V\) in \(N_k(X_{\Gamma}^n)\), although when confusion seems unlikely we omit the brackets.

A set of points in \(\P^n\) is said to be \emph{linearly general} if no \(k+2\) points are contained in a linear subspace \(\P^k \subset \P^n\) for \(1 \leq k \leq n-1\).  A claim holds for a \emph{very general} configuration of points if it holds for all points in the complement of a countable union of proper configurations of points.

\begin{convention}\label{conv-terrible}
It is occasionally useful to compare the cones $\Effb_k(X_{\Gamma}^n)$ and $\Effb_k(X_{\Delta}^m)$, where $X_{\Gamma}^n$ and $X_{\Delta}^m$ are the blow-ups of $\P^n$ and  $\P^m$ along sets of points $\Gamma$ and $\Delta$, respectively. If $n>k$, we can identify $N_k(X_{\Gamma}^n)$ with the  abstract vector space spanned by $H_k$ and $E_{i,k}$ for $1 \leq i \leq r$, irrespective of $n$ and $\Gamma$ provided that $\Gamma$ has cardinality $r$.  We can thus view the cones $\Effb_k(X_{\Gamma}^n)$ as cones in the same abstract vector space and compare the effective cones of different blow-ups after this identification. In the rest of the paper, we will do so without further comment.
\end{convention}

We will often use the following easy lemma implicitly.
\begin{lemma}\label{lem-ab}
Let $Y \subset X_{\Gamma}^n$ be a $k$-dimensional subvariety.
\begin{enumerate}
\item If $Y \subset E_i$ for some $1 \leq i \leq r$, then $[Y] = b_i E_{i,k}$ for  \(b_i>0\).
\item Otherwise, $[Y] = aH_k - \sum_{i=1}^r b_i E_{i,k}$ with $a \geq b_i \geq 0$.  The coefficient \(b_i\) is equal to the multiplicity of \(\pi(Y)\) in \(\P^n\) at the point \(p_i\).
\end{enumerate}
\end{lemma}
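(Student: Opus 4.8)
The plan is to handle the two cases separately, reading off the coefficients of $[Y]$ from the intersection numbers recorded above. For part (1), note that $E_i \cong \P^{n-1}$, so the numerical group of $k$-cycles on $E_i$ is generated by the class of a $k$-plane, and the class of $Y$ computed inside $E_i$ is $(\deg Y)$ times this class, where $\deg Y>0$ is the degree of $Y$ in $E_i \cong \P^{n-1}$. Pushing forward under the inclusion $E_i \hookrightarrow X_\Gamma^n$ carries the class of a $k$-plane to $E_{i,k}$, giving $[Y] = b_i E_{i,k}$ with $b_i = \deg Y > 0$. For part (2), since $Y$ is irreducible and not contained in any $E_i$, the restriction $\pi|_Y$ is birational onto its image $Z := \pi(Y)$, which is then a $k$-dimensional subvariety of $\P^n$, and $Y$ is the strict transform of $Z$. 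I will write $[Y] = a H_k - \sum_i b_i E_{i,k}$ and pin down the coefficients by pairing against the basis $\{H^k = H_{n-k},\, E_{j,n-k}\}$ of $N^k(X_\Gamma^n)$.

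First I would record the pairings. Using the relations in the Notation together with $H \cdot E_i = 0$, $E_i \cdot E_j = 0$ for $i \ne j$, and $(-1)^{n-1}E_i^n = 1$, a short computation gives $H^k \cdot H_k = 1$, $H_k \cdot E_{j,n-k} = 0$, and $E_{i,k}\cdot E_{j,n-k} = -\delta_{ij}$, while $H^k \cdot E_{i,k} = 0$ because $E_{i,k}$ maps to the point $p_i$; in particular the pairing matrix is diagonal and hence nondegenerate. Consequently $a = H^k \cdot [Y]$ and $b_j = [Y]\cdot E_{j,n-k}$. Writing $H = \pi^* h$ and applying the projection formula yields $a = H^k\cdot[Y] = h^k \cdot \pi_*[Y] = h^k \cdot [Z] = \deg Z \ge 0$, since $\pi|_Y$ has degree one.

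It remains to interpret $b_j$ geometrically and to prove the inequalities. Representing $E_{j,n-k}$ by a general $(n-k)$-plane $L \subset E_j \cong \P^{n-1}$, the number $[Y]\cdot E_{j,n-k}$ equals the number of points of $Y \cap L$. Because $Y$ is the strict transform of $Z$, its intersection with $E_j$ is the projectivized tangent cone of $Z$ at $p_j$, a $(k-1)$-dimensional cycle of degree $\mult_{p_j}Z$; a general $(n-k)$-plane in $E_j$ meets it in exactly $\mult_{p_j}Z$ reduced points, so $b_j = \mult_{p_j}Z \ge 0$. The bound $a \ge b_j$ is the classical inequality $\mult_{p_j}Z \le \deg Z$: a general $(n-k)$-dimensional linear space through $p_j$ meets $Z$ with local multiplicity $\mult_{p_j}Z$ at $p_j$ and nonnegative multiplicity elsewhere, the total being $\deg Z$.

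I expect the one genuinely delicate point to be the identification $b_j = \mult_{p_j}Z$. This relies on the standard fact that the strict transform meets the exceptional divisor in the projectivized tangent cone, whose degree is the multiplicity, together with a moving-lemma/Kleiman transversality argument guaranteeing that a general plane $L \subset E_j$ meets $Y$ transversally in $X_\Gamma^n$ in the expected number of reduced points. The sign bookkeeping of the second paragraph and the degree--multiplicity inequality are routine.
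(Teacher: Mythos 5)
Your proposal is correct and takes essentially the same approach as the paper's proof: part (1) via $E_i \cong \P^{n-1}$, the bound $b_i \geq 0$ from the effective cycle $Y \cap E_i$, the bound $a \geq b_i$ from general linear spaces through $p_i$ (the paper phrases this upstairs via the base-point-free system $\left| H - E_i \right|$ rather than downstairs in $\P^n$, but it is the same geometric argument), and the identification $b_i = \mult_{p_i}\pi(Y)$ via the projectivized tangent cone, which the paper simply cites from Fulton. The only quibble is that the tangent cone need not be reduced, so one should speak of a length-$\mult_{p_i}Z$ intersection rather than that many reduced points; the degree count is unaffected.
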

\begin{proof}
If $Y \subset E_i$, then $Y$ is a subvariety of $E_i \cong \P^{n-1}$. Hence, its class is a positive multiple of the class of a $k$-dimensional linear space. 
The linear system $H-E_i$ defines the projection from the point $p_i$ and is a base-point-free linear system. Hence,  the intersection of $k$ general members of $H-E_i$ with $Y$ is either empty or finitely many points. Therefore, $(H-E_i)^k\cdot [Y] = a - b_i \geq 0$. Similarly, the intersection $Y \cap E_i$ is a (possibly empty) effective cycle of dimension $k-1$ contained in $E_i$. Hence, by the first part of the lemma, $b_i \geq 0$.  That \(b_i\) in fact coincides with the multiplicity is~\cite[Cor.\ 6.7.1]{Fulton}.
\end{proof}

The cones $\Effb_k(X_{\Gamma}^n)$ satisfy a basic semicontinuity property under specialization.

\begin{lemma}
\label{multincrease}
Suppose that \(V \subset \P^n \times T\) is a closed subvariety, flat over \(T\), and let \(p : T \to \P^n\) be a section.  Then \(\mult_{p(t)}(V_t)\) is an upper semicontinuous function of \(T\).
\end{lemma}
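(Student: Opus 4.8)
The plan is to interpret $\mult_{p(t)}(V_t)$ as a Hilbert--Samuel multiplicity and to deduce semicontinuity from the semicontinuity of fiber dimensions of a coherent sheaf. Since upper semicontinuity is local on $T$, I would first shrink $T$ so that $p(T)$ lies in a single affine chart $\mathbb{A}^n \subset \P^n$, and then apply the fiberwise translation $(x,t) \mapsto (x - p(t),\, t)$, an automorphism of $\mathbb{A}^n \times T$ over $T$ that preserves flatness and carries $p$ to the constant section $0$. Thus I may assume $p(t) \equiv 0$ and must show that $\mult_0(V_t)$ is upper semicontinuous. Recall that $\mult_0(V_t)$ equals $d!$ times the leading coefficient of the Hilbert--Samuel polynomial $m \mapsto \dim_{\kappa(t)} \cO_{V_t,0}/\mathfrak m_0^{\,m}$, where $\mathfrak m_0$ is the maximal ideal and $d = \dim_0 V_t$; by flatness of $V \to T$ this relative dimension is independent of $t$ (on the locus where $0 \in V_t$, the only place the multiplicity is nonzero).

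Next I would realize the Hilbert functions as fibers of a coherent sheaf on $T$. Let $\Sigma \subset V$ be the image of the constant section and $\mathcal I \subset \cO_V$ its ideal. Restriction to a fiber is a surjection of rings carrying $\mathcal I$ onto the ideal $\mathfrak m_0$ of the reduced point $0$ in $V_t$, so the image of $\mathcal I^{\,m}$ is $\mathfrak m_0^{\,m}$ and the base-change identity
\[
  \bigl(\cO_V / \mathcal I^{\,m}\bigr) \otimes_{\cO_T} \kappa(t) \;\cong\; \cO_{V_t,0} / \mathfrak m_0^{\,m}
\]
holds for every $m$ and $t$. The left-hand sheaf is supported on $\Sigma$, which is isomorphic (in particular finite) over $T$, so its pushforward to $T$ is coherent with fiber of dimension $H_t(m) := \dim_{\kappa(t)} \cO_{V_t,0}/\mathfrak m_0^{\,m}$. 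By the standard semicontinuity of fiber dimensions of a coherent sheaf, each function $t \mapsto H_t(m)$ is upper semicontinuous.

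It remains to pass from the individual Hilbert functions to the leading coefficient, and I would do this along specializations: if $t_0 \in \overline{\{t\}}$, then for each $m$ the point $t_0$ lies in the closed set $\{\,H(m) \ge H_t(m)\,\}$, so $H_{t_0}(m) \ge H_t(m)$ for \emph{all} $m$ at once. Comparing the two Hilbert--Samuel polynomials, both of degree $d$, for $m \gg 0$ then yields $\mult_0(V_{t_0}) \ge \mult_0(V_t)$; hence the multiplicity never decreases under specialization. To upgrade this to genuine upper semicontinuity I would use that $t \mapsto \mult_0(V_t)$ is a constructible function --- which follows by stratifying $T$ into finitely many locally closed pieces on which finitely many of the $H_t(m)$ are constant --- so that each superlevel set $\{\,t : \mult_0(V_t) \ge c\,\}$ is constructible and stable under specialization, and therefore closed. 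The main obstacle is precisely this final step: the pointwise semicontinuity of the $H_t(m)$ does not by itself control the alternating combination defining the leading coefficient, and one must supply the constructibility (equivalently, a bound over $T$ for where the Hilbert function becomes polynomial, uniform on each stratum) in order to convert specialization-stability into closedness.
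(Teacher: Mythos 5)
Your argument is essentially correct, but it takes a genuinely different route from the paper. The paper's proof is geometric: it reduces to a one-parameter base, blows up \(\P^n\times T\) along \(p(T)\), and uses conservation of number for flat families (Fulton, Prop.\ 10.2.1) to see that the intersection of the strict transform \(\tilde V\) with powers of the exceptional divisor is constant in \(t\); the special fiber of \(\tilde V\) may acquire extra components inside the exceptional \(\P^{n-1}\), and since \((-1)^kE_0^k\) restricts to a hyperplane class there, those components contribute non-negatively, so the multiplicity of the honest special fiber can only increase. You instead identify \(\mult_{p(t)}(V_t)\) with the Hilbert--Samuel multiplicity of \(\cO_{V_t,0}\) and extract the specialization inequality from upper semicontinuity of the fiber dimensions of the coherent sheaves \(\cO_V/\mathcal I^{\,m}\). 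The paper's version is shorter and stays at the level of cycle classes, which is all that Corollary 2.4 needs; yours avoids the strict transform and conservation of number entirely, and it makes explicit where irreducibility and flatness of \(V\) enter (namely, in forcing all fibers to be equidimensional of the same dimension \(d\), so that the two Hilbert--Samuel polynomials being compared have the same degree).

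The step you leave open --- constructibility of \(t\mapsto\mult_0(V_t)\), needed to convert ``non-decreasing under specialization'' into genuine upper semicontinuity --- is real but standard, and you have correctly identified what must be supplied: a uniform \(m_0\) on each stratum beyond which \(H_t(m)\) agrees with a polynomial. This follows by Noetherian induction on \(T\) together with generic flatness applied to the associated graded sheaf \(\bigoplus_m \mathcal I^{\,m}/\mathcal I^{\,m+1}\), a finitely generated graded \(\cO_T\)-algebra whose graded pieces are finite over \(T\): over a dense open subset of each irreducible component of \(T\) all graded pieces are simultaneously flat, so the Hilbert function, hence the Hilbert--Samuel polynomial and its leading coefficient, are constant there. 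It is worth noting that the paper's opening reduction ``it suffices to treat \(\dim T=1\)'' implicitly relies on the same constructibility to pass from curves back to a general base, so neither proof escapes this point; for the application in Corollary 2.4 only the one-parameter statement is used, where specialization-stability is the entire content.
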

\begin{proof}
It suffices to prove this in the case that \(T\) has dimension \(1\).  Let \(\pi : Y \to \P^n \times T\) be the blow-up along \(p(T)\), with exceptional divisor \(E\), and let \(\tilde{V}\) be the strict transform of \(V\) on \(Y\).  Since  \(\tilde{V}\) is irreducible and dominates \(T\), this family is flat.  The intersection of a flat family of cycles with a Cartier divisor is constant in \(t\)~\cite[Prop.\ 10.2.1]{Fulton}, and so \((-1)^{k} E^k \cdot V_t\) is independent of \(t\).

The general fiber \(\tilde{V}_t\) is irreducible, but a special fiber \(\tilde{V}_0\) may have additional components in the exceptional divisor \(E_0\).  Write \(\tilde{V}_0 = V^0 \bigcup \cup_i W_i\), where the \(W_i\) are contained in \(E_0\).  Then \((-1)^{k} E^k \cdot V^0 = \mult_{p(0)} V\).  The class \((-1)^{k} E_0^k\) is a hyperplane in \(E_0\), and so \((-1)^{k} E_0^k \cdot W_i \geq 0\).  This shows that \(\mult_{p(0)} V_0 \geq (-1)^{k} E^k \cdot \tilde{V}_t = \mult_{p(t)} V_t \), and so the multiplicity is upper semicontinuous.
\end{proof}

\begin{corollary}
\label{cor-semicont}
Let \(\Gamma\) be a configuration of \(r\) distinct points on \(\P^n\).  Then \(\Effb_k(X^n_r) \subseteq \Effb_k(X_\Gamma^n)\).
\end{corollary}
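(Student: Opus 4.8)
The plan is to show that every effective class on $X_r^n$ is already effective on $X_\Gamma^n$, the passage from the very general configuration to $\Gamma$ only increasing multiplicities at the marked points. Using the identification of the spaces $N_k$ from Convention~\ref{conv-terrible}, and the fact that $\Effb_k(X_\Gamma^n)$ is closed, it suffices to place the class of every irreducible effective $k$-cycle $V$ on $X_r^n$ inside $\Effb_k(X_\Gamma^n)$. By Lemma~\ref{lem-ab} there are two cases. If $[V] = b\,E_{i,k}$ with $b>0$, the class is manifestly effective on every blow-up $X_\Gamma^n$, so nothing is to prove. Otherwise $[V] = aH_k - \sum_i b_i E_{i,k}$ with $a \geq b_i \geq 0$, where $b_i$ is the multiplicity of $W := \pi(V) \subseteq \P^n$ at the very general point $p_i$; this is the case requiring work.

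The engine of the argument is that effectivity of a fixed class is stable under specialization of the configuration, and this is exactly what Lemma~\ref{multincrease} provides. Concretely, I would produce a flat family $\mathcal W \to B$ of $k$-dimensional subvarieties of $\P^n$ over an irreducible base $B$, equipped with $r$ marked sections, whose general member is the pair consisting of $W$ together with its very general marked points $p_1,\dots,p_r$, and which has a special member lying over the configuration $\Gamma$. Each general fiber then has degree $a$ and multiplicity $b_i$ along the $i$-th section, so the flat limit $\mathcal W_0$ over $\Gamma$ is an effective cycle of degree $a$, and applying Lemma~\ref{multincrease} to each section gives $b_i' := \mult_{q_i}(\mathcal W_0) \geq b_i$, where $q_i$ are the points of $\Gamma$.

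To conclude I compare classes on $X_\Gamma^n$. By Lemma~\ref{lem-ab} the strict transform $\widetilde{\mathcal W}_0$ of $\mathcal W_0$ has class $aH_k - \sum_i b_i' E_{i,k}$, and therefore
\[
 \alpha \;=\; [\widetilde{\mathcal W}_0] \;+\; \sum_i (b_i' - b_i)\, E_{i,k}.
\]
The right-hand side is a nonnegative combination of the effective class $[\widetilde{\mathcal W}_0]$ and the effective exceptional classes $E_{i,k}$, so $\alpha \in \Effb_k(X_\Gamma^n)$, as required. I should also note that $\mathcal W_0$ may be reducible or may acquire components inside the exceptional divisors; but any such component contributes only a further nonnegative multiple of some $E_{i,k}$ and so does no harm.

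The main obstacle is the spreading-out step: manufacturing the family $\mathcal W \to B$ in which the very general configuration appears as a \emph{general} member (so that the specialization runs toward $\Gamma$ and the multiplicities move upward, not downward). This rests on the principle that the locus of configurations for which the fixed class $\alpha$ is effective is closed — which can be seen either from properness of the relative Chow scheme of cycles of class $\alpha$, or, more in the spirit of the present section, from the flat-limit argument above combined with Lemma~\ref{multincrease} — together with the observation that a closed condition holding at a very general (equivalently, the generic) configuration must hold at every configuration, in particular at $\Gamma$. Once this foundational point is in place, the remaining bookkeeping with multiplicities is routine.
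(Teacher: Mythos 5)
Your argument is correct and follows essentially the same route as the paper: degenerate the very general configuration to $\Gamma$, spread the cycle out in a flat family via a Hilbert-scheme/Chow-variety argument, and apply Lemma~\ref{multincrease} to see that multiplicities at the marked points can only increase in the limit, so the class is the strict transform of the limit plus nonnegative multiples of the $E_{i,k}$. You merely make explicit the reduction to irreducible cycles and the final bookkeeping, which the paper leaves implicit.
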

\begin{proof}
Let \(\Gamma_t\) be a very general one-parameter family of configurations of points in \(\P^n\) with  \(\Gamma_0 = \Gamma\).  If a \(k\)-cycle class \(W\) is effective for very general \(T\), then by a Hilbert scheme argument there exists a flat family \(V_t \subset \Bl_{\Gamma_t} \P^n\) over \(T\) with \([V_t]  = W\) for general \(T\).  Since the multiplicity of \(W_t\) can only increase at \(t=0\) by Lemma~\ref{multincrease}, the class \(W\) is also effective on \(X_\Gamma\).
\end{proof}

\subsection*{Cones} Taking cones will be a useful method to generate interesting cycles. Let $\Gamma'$ be a very general configuration of $r+1$ points $p_0', \dots, p_r'$ in $\P^{n+1}$. The projection of the points $p_1', \dots, p_r'$ from \(p_0^\prime\) is then a set $\Gamma$ of $r$ very general points $p_1, \dots, p_r$ in $\P^n$.  Suppose that \(V\) is a \(k\)-cycle on \(X^n_{\Gamma}\), with class \(aH_k - \sum_{i=1}^r b_i E_{i,k}\).  The image of \(V\) in \(\P^n\) has degree \(a\) and multiplicity \(b_i\) at the points of \(\Gamma\).  We may form the cone \(CV\) over \(V\) inside \(\P^{n+1}\) with vertex at $p_0'$.  This is a \((k+1)\)-dimensional variety, of degree \(a\).  It has multiplicity \(a\) at the cone point, and multiplicity \(b_i\) along the lines  spanned by $p_i$ and  $p_i'$ for $1 \leq i \leq r$.  In particular, the cycle \(CV\) has degree \(a\), and multiplicities \(a,b_1,\ldots,b_r\) at the points of \(\Gamma'\). Its proper transform has class $ a H_{k+1} -a E_{0,k+1} - \sum_{i=1}^r b_i E_{i, k+1}.$

We define a map \(C : N_k(X^n_r) \to N_{k+1}(X^{n+1}_{r+1})\) by \(C(H_k) = H_{k+1} - E_{0,k+1}\) and \(C(E_{i,k}) = E_{i,k+1}\).  With this definition, \(C([V])\) is the class of the cone over \(V\) with vertex $p_0'$, and so \(C(\Effb_k(X^n_r)) \subseteq \Effb_{k+1}(X^{n+1}_{r+1})\) with respect to the identification discussed in Convention \ref{conv-terrible}.

The following simple computation of a dual cone will be useful on a number of occasions.
\begin{lemma}
\label{maininequality}
Suppose that \(\bv = (a,b_1,\ldots,b_r) \in \Z^{r+1}\) is a vector satisfying
\begin{enumerate}
\item \(a, b_i \geq 0\);
\item \(a \geq b_i\) for every \(i\);
\item \(na \geq \sum_{i=1}^r b_i\).
\end{enumerate}
Then \(\bv\) is a positive linear combination of the vectors \(e_i\) (\(1 \leq i \leq r\)) and \(h_I = e_0 - \sum_{i \in I} e_i\) with \(\abs{I} = n\).
\end{lemma}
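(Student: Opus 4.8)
The plan is to convert the statement into an explicit feasibility problem for the coefficients of a positive combination, and then solve that problem using the combinatorics of the hypersimplex. Write a candidate decomposition as $\bv = \sum_{\abs{I}=n}\lambda_I h_I + \sum_{i=1}^r \mu_i e_i$ with all $\lambda_I,\mu_i \ge 0$. Comparing the $e_0$-coefficient forces $\sum_I \lambda_I = a$, while comparing the $e_i$-coefficient forces $\mu_i = \bigl(\sum_{I\ni i}\lambda_I\bigr) - b_i$. Since each $\mu_i$ is free to absorb any nonnegative slack, the existence of the desired decomposition is equivalent to finding weights $\lambda_I \ge 0$ with $\sum_I \lambda_I = a$ and $\sum_{I\ni i}\lambda_I \ge b_i$ for every $i$. (Throughout I assume $r \ge n$, which is forced once $a>0$ since otherwise no $h_I$ exists; the case $a=0$ is trivial, as then all $b_i=0$ by (2).) A useful first sanity check is that hypotheses (2) and (3) are exactly necessary here: $\sum_{I\ni i}\lambda_I \le \sum_I \lambda_I = a$ recovers $a \ge b_i$, and summing the $r$ covering inequalities together with the identity $\sum_i \sum_{I\ni i}\lambda_I = n\sum_I \lambda_I = na$ recovers $na \ge \sum_i b_i$. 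The content of the lemma is that these necessary conditions are also sufficient.

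To prove sufficiency I would first normalize. Using $r \ge n$, the total headroom $\sum_i(a-b_i) = ra - \sum_i b_i \ge (r-n)a \ge 0$ is at least the surplus $na - \sum_i b_i$, so I can raise the $b_i$ to values $b_i' \in [b_i,a]$ with $\sum_i b_i' = na$; solving the covering problem for the larger demands $b_i'$ solves it for the original $b_i$. Once $\sum_i b_i' = na$, the covering inequalities are forced to be equalities, so it suffices to hit the marginals exactly, i.e.\ to find $\lambda_I \ge 0$ with $\sum_I \lambda_I = a$ and $\sum_{I\ni i}\lambda_I = b_i'$ for all $i$.

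This is where the hypersimplex enters. Set $x = (b_1'/a,\ldots,b_r'/a)$; then $x \in [0,1]^r$ and $\sum_i x_i = n$, so $x$ lies in the hypersimplex $\Delta_{n,r} = \set{x \in [0,1]^r : \sum_i x_i = n}$, which is precisely the convex hull of the indicator vectors $\mathbf 1_I$ of the $n$-element subsets $I \subseteq \set{1,\ldots,r}$. Writing $x = \sum_I \alpha_I \mathbf 1_I$ as a convex combination and putting $\lambda_I = a\alpha_I$ gives $\sum_I \lambda_I = a$ and $\sum_{I\ni i}\lambda_I = a\,x_i = b_i'$, exactly as needed. The only input that genuinely requires proof is the description of $\Delta_{n,r}$, and this follows from a short induction on the number of non-integral coordinates of a point $x$: since $\sum_i x_i = n$ is an integer, any non-vertex has at least two fractional coordinates, and increasing one while decreasing another (keeping the sum fixed and staying in $[0,1]^r$) writes $x$ on a segment between two points of $\Delta_{n,r}$ with strictly fewer fractional coordinates, whose ultimate endpoints are the $0/1$ vectors with coordinate sum $n$, namely the $\mathbf 1_I$.

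The main obstacle is this last step, namely establishing that the set of achievable marginals is exactly the hypersimplex, equivalently that the numerical conditions are not merely necessary but sufficient; the coordinate bookkeeping of the first paragraph and the normalization of the second are routine. As an alternative to the convex-hull description one could argue the feasibility of $\set{\lambda \ge 0,\ \sum_I\lambda_I=a,\ \sum_{I\ni i}\lambda_I \ge b_i}$ directly by induction on $r$ (base case $I=\set{1,\ldots,n}$, $\lambda_I=a$) or by a max-flow/Hall-type argument, but the hypersimplex route seems the most economical.
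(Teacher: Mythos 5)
Your proof is correct, and it takes a genuinely different route from the paper's. The paper proceeds by induction on \(a\): it subtracts a single generator \(h_J\), where \(J\) is a set of \(\min(n, \#\set{i : b_i > 0})\) indices of nonzero \(b_i\) (to preserve hypothesis (2) one should take the indices of the \emph{largest} \(b_i\), a point the paper glosses over), checks that the three inequalities survive, and repeats; this yields an explicit decomposition with integer coefficients in at most \(a\) steps. You instead reduce the statement to a marginal-covering problem --- find \(\lambda_I \geq 0\) with \(\sum_I \lambda_I = a\) and \(\sum_{I \ni i} \lambda_I \geq b_i\) --- normalize so that these constraints become equalities, and quote the vertex description of the hypersimplex \(\Delta_{n,r}\), for which your induction on the number of fractional coordinates is a complete and standard argument. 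Your reduction is right: you correctly adopt the sign convention under which \(\mu_i = \sum_{I \ni i}\lambda_I - b_i\), i.e.\ the decomposition of \(aH_k - \sum_i b_i E_{i,k}\) into the classes \(H_k - \sum_{i \in I} E_{i,k}\) and \(E_{i,k}\), which is the intended (and only non-trivial) reading of the lemma and the one consistent with the paper's own proof; and your treatment of the edge cases \(a = 0\) and \(r \geq n\) is more careful than the paper's, which tacitly assumes \(r \geq n\) in its opening sentence. What your approach buys is conceptual clarity: it identifies the cone generated by the \(e_i\) and \(h_I\) as the cone over a hypersimplex, so that conditions (1)--(3) are visibly its exact defining inequalities and the necessity direction comes for free. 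What the paper's approach buys is brevity and integrality of the coefficients (yours are a priori only rational, which is all the lemma requires).
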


\begin{proof}
Note first that the vectors \(h_I = e_0 - \sum_{i \in I} e_i\) with \(\abs{I} < n\) are positive linear combinations of the given vectors.
We now proceed by induction on \(a\).  The case \(a=1\) is clear: since by (2) \(a \geq b_i\) for each \(i\), each \(b_i\) is either \(0\) or \(1\).  By (3) there are at most \(n\) nonzero \(b_i\), and the vector is of the form claimed.

Suppose that \(a > 1\).  Let \(j\) be the minimum of \(n\) and the number of nonzero \(b_i\), and let \(J\) the the set of  indices of $j$ nonzero $b_i$.  Then the vector $h_J$ is a nonnegative linear combination of the given vectors.  Set $v'= (a', b_i') = \bv-h_J$. If $j \geq n$, $v'$ still satisfies all of the inequalities in question since $a' = a -1$ and $\sum b_i' = \sum b_i - n$. If $j < n$, then in view of inequality (2), the inequality (3) can be improved to $j a \geq \sum_{i=1}^r b_i$. Then $v'$ satisfies these improved inequalities. This completes the proof by induction on $a$.    
\end{proof}

Lemma~\ref{maininequality} implies that the cone \(\Effb_k(X^n_r)\) is linearly generated if and only if the class \((k+1)H_{n-k} - \sum_{i=1}^r E_{i,n-k}\) is nef.

\subsection*{The codimension-$1$ cones and Cremona actions} 
We are primarily interested in the question of when the cones of cycles on \(X_{\Gamma}^n\) are linearly or finitely generated.  For cones of divisors, the answers to these questions were worked out by Castravet--Tevelev and Mukai~\cite{CastravetTevelev}, \cite{Mukai}.

\begin{theorem}[\cite{CastravetTevelev}, \cite{Mukai}]
\label{divisorcase}
Let $\Gamma$ be a set of $r$ very general points in $\P^n$. 
The cone \(\Effb^1(X_{\Gamma}^n)\) is linearly generated if and only if \(r \leq n+2\), and finitely generated if and only if
\begin{enumerate}
\item \(n=2\) and \(r \leq 8\),
\item \(n=3\) and \(r \leq 7\),
\item \(n=4\) and \(r \leq 8\),
\item \(n \geq 5\) and \(r \leq n+3\)
\end{enumerate}
\end{theorem}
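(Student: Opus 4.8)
The statement splits into a linear-generation claim and a finite-generation claim, and I would attack these by quite different methods.

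For the linear-generation equivalence, the plan is to first invoke the criterion recorded just after Lemma~\ref{maininequality}: taking $k=n-1$, the cone $\Effb^1(X_\Gamma^n)=\Effb_{n-1}(X_\Gamma^n)$ is linearly generated if and only if the $1$-cycle class $W:=nH_1-\sum_{i=1}^r E_{i,1}$ is nef, i.e. $W\cdot D\ge 0$ for every effective divisor $D=aH-\sum b_iE_i$; by Lemma~\ref{lem-ab} this reads $na\ge\sum_{i=1}^r b_i$. For $r\le n+2$ I would prove nefness by exhibiting $W$ as the class of a \emph{covering family of curves}: through the $r\le n+2$ very general points together with one further general point $q$ there passes a rational normal curve of degree $n$ (unique once the total is $n+3$ points), whose strict transform has class exactly $nH_1-\sum_{i=1}^r E_{i,1}$. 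As $q$ varies these curves sweep out $X_\Gamma^n$, so a general member avoids the support of any given effective $D$; hence $W\cdot D\ge 0$ and $W$ is nef. For the converse I would, for $r=n+3$, produce an explicit violating divisor: starting from the hyperplane through the $n$ points $p_1,\dots,p_{n-2},p_{n+2},p_{n+3}$ and applying the standard Cremona transformation based at $p_1,\dots,p_{n+1}$ (a pseudo-automorphism of $X_\Gamma^n$, hence effectivity-preserving), one obtains the effective quadric class $2H-2\sum_{i=1}^{n-2}E_i-\sum_{i=n-1}^{n+1}E_i-E_{n+2}-E_{n+3}$, for which $\sum b_i=2n+1>2n=na$. (For $n=2$ this is simply the conic through $5$ points.) Since this divisor stays effective and $W$-negative after adjoining further points, $W$ fails to be nef for all $r\ge n+3$, giving linear generation exactly for $r\le n+2$.

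For finite generation I would pass to the action of the Weyl group. The group $\mathcal{W}$ generated by the permutations of the $p_i$ together with the standard Cremona involution acts on $N^1(X_\Gamma^n)$ preserving $K_{X_\Gamma^n}$ and the associated symmetric form, with Coxeter diagram $T_{2,\,n+1,\,r-n-1}$; because the Cremona transformations are pseudo-automorphisms, $\Effb^1(X_\Gamma^n)$ is $\mathcal{W}$-invariant. The dichotomy is governed by finiteness of $\mathcal{W}$, which by the classical classification of Coxeter groups holds if and only if $\tfrac12+\tfrac{1}{n+1}+\tfrac{1}{r-n-1}>1$; a direct check of this single inequality reproduces exactly the four listed cases $n=2,\,r\le 8$; $n=3,\,r\le 7$; $n=4,\,r\le 8$; and $n\ge 5,\,r\le n+3$.

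It then remains to match finiteness of $\mathcal{W}$ with finite generation of the cone. When $\mathcal{W}$ is finite, $X_\Gamma^n$ is a Mori Dream Space (Mukai, Castravet–Tevelev), so $\Effb^1$ is rational polyhedral. When $\mathcal{W}$ is infinite, I would follow Mukai and consider the orbit $\mathcal{W}\cdot E_r$ of an exceptional class: each $w\cdot E_r$ is the class of an irreducible rigid effective divisor spanning an extremal ray of $\Effb^1$, and since the stabilizer of $E_r$ is finite the orbit, hence the set of extremal rays, is infinite. Thus $\Effb^1$ fails to be finitely generated precisely when $\mathcal{W}$ is infinite.

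\textbf{Main obstacle.} The covering-curve and Cremona computations are routine; the real difficulty lies in the finite-generation direction. Proving that $X_\Gamma^n$ is a Mori Dream Space when $\mathcal{W}$ is finite rests on finite generation of its Cox ring, a deep input drawn from the classification underlying Mukai's and Castravet–Tevelev's work (via Gale duality and the relation to $\overline{M}_{0,r}$). Equally delicate is showing, when $\mathcal{W}$ is infinite, that the orbit classes $w\cdot E_r$ are genuinely \emph{effective, irreducible, and extremal}, rather than merely numerical solutions of the relevant quadratic relation; establishing rigidity and extremality of these Weyl divisors is the technical heart of that half of the argument.
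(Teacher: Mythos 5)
This theorem is not proved in the paper: it is quoted from Castravet--Tevelev and Mukai, with the paper only sketching the mechanism (the Coxeter group of type $T_{2,n+1,r-n-1}$ acting on $N^1$, the finiteness criterion $\tfrac12+\tfrac1{n+1}+\tfrac1{r-n-1}>1$, and the infinite orbit of an exceptional class), and separately recovering the linear-generation part via rational normal curves through $n+3$ points inside the proof of Theorem~\ref{lingen}. Your outline follows exactly this route --- your nefness criterion for $nH_1-\sum E_{i,1}$, the covering rational normal curves, the Cremona-image quadric with $\sum b_i=2n+1$ at $r=n+3$, and the Weyl-group case check are all correct --- and you rightly identify that the genuinely hard steps (finite generation of the Cox ring when $\mathcal W$ is finite, and effectivity/extremality of the orbit classes when it is infinite) are precisely the content of the cited references rather than something to be reproved here.
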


The characterization of cases when the effective cone of divisors is finitely generated is based on the study of the action of Cremona transformations on the pseudoeffective cone.  The Coxeter group \(W\) corresponding to a \(T\)-shaped Dynkin diagram of type \(T_{2,n+1,r-n-1}\) acts on \(N^1(X_{\Gamma}^n)\)  and preserves the pseudoeffective cone \(\Effb^1(X_{\Gamma}^n)\). This is an infinite group if \(\frac{1}{2} + \frac{1}{n+1} + \frac{1}{r-n-1} < 1\), which happens as soon as $n\geq 5$ and $r \geq n+4$.  (When $n=2$ or $4$, we need $r \geq 9$, while when $n=3$, we require $r \geq 8$). The orbit of a single exceptional divisor class gives an infinite set divisors spanning other extremal rays. For details on this group action, we refer to~\cite{Dolgachev} (see also~\cite{Coble}).

Unfortunately, there does not seem to be a simple way to use the Cremona action to understand cones of cycles of higher codimension.  The standard Cremona involution acts on \(X^n_r\) by a map with codimension \(2\) indeterminacy, so it does not define an action preserving the cone \(\Effb^k(X^n_r)\) for any \(k > 1\).  For example, suppose that \(L\) is a line through two blown up points. The class of \(L\) defines an extremal ray on \(\Effb_1(X^n_r)\). The strict transform of \(L\) under a Cremona transformation centered at \(n+1\) other points is a rational normal curve in \(\P^n\) passing through \(n+3\) points.  If \(n \geq 3\), this is no longer an extremal ray on \(\Effb_1(X^n_r)\), since it is in the interior of the subcone generated by classes of lines through \(2\) of the \(n+3\) points.

One might attempt to construct interesting codimension-\(2\) cycles on \(X_n^r\)  by taking the intersections of a fixed divisor with an infinite sequence of \((-1)\)-divisors (i.e.\ divisors in the orbit of \(E_i\) under the action of \(W\)) of increasing degree.  However, the next lemma shows that the intersection of a \((-1)\)-divisor with any other effective divisor on \(X^n_r\) is in the span of the classes of codimension-\(2\) linear cycles.

\begin{lemma}\label{lem-cremona}
Suppose that \(D_1\) is a \((-1)\)-divisor and that \(D_2\) is an irreducible effective divisor distinct from \(D_1\).  Then \([D_1 \cap D_2]\) is in the span of linear codimension-\(2\) cycles.
\end{lemma}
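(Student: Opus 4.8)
The plan is to compute the class of $D_1 \cap D_2$ explicitly and then characterize, via Lemma~\ref{maininequality}, exactly when such a class lies in the cone of linear codimension-$2$ cycles. Write $D_1 = d_1 H - \sum_i m_i E_i$ and $D_2 = d_2 H - \sum_i n_i E_i$. If $d_1 = 0$ then $D_1 = E_j$ for some $j$, and since $D_2$ is irreducible and distinct from $E_j$ the two meet properly, with $E_j \cdot D_2 = \mult_{p_j}(D_2)\, E_{j,n-2}$, a nonnegative multiple of a linear class; the symmetric argument handles $d_2 = 0$. So I may assume $d_1, d_2 \geq 1$, in which case $m_i \leq d_1$ and $n_i \leq d_2$ by Lemma~\ref{lem-ab}. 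Using $H \cdot E_i = 0$, $E_i \cdot E_j = 0$ for $i \neq j$, and $E_i^2 = -E_{i,n-2}$, all cross terms drop out and
\[
[D_1 \cap D_2] = d_1 d_2\, H_{n-2} - \sum_{i=1}^r m_i n_i\, E_{i,n-2}.
\]
Since a linear $(n-2)$-plane passes through at most $n-1$ of the points, the linear codimension-$2$ cone is generated by the classes $E_{i,n-2}$ and $H_{n-2} - \sum_{i \in I} E_{i,n-2}$ with $\abs{I} = n-1$. By Lemma~\ref{maininequality}, applied with $n-1$ in place of $n$, the class above lies in this cone precisely when
\[
m_i n_i \leq d_1 d_2 \ \text{ for all } i, \qquad\text{and}\qquad \sum_{i=1}^r m_i n_i \leq (n-1)\, d_1 d_2 .
\]

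The first inequality is immediate, either from $m_i \leq d_1$ and $n_i \leq d_2$, or, more robustly, by pairing the effective class $[D_1 \cap D_2]$ with the nef class $(H - E_i)^{n-2}$, which yields $0 \leq (H - E_i)^{n-2} \cdot [D_1 \cap D_2] = d_1 d_2 - m_i n_i$.

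The substance of the lemma is the second inequality. The key observation I would use is that $(n-1) d_1 d_2 - \sum_i m_i n_i = \langle D_1, D_2 \rangle$ is the value of the $W$-invariant symmetric bilinear form on $N^1(X_r^n)$, normalized by $\langle E_i, E_i \rangle = -1$; invariance under a standard Cremona forces $\langle H, H \rangle = n-1$ and $\langle H, E_i \rangle = 0$, as one checks from $\langle \psi^\ast E_i, \psi^\ast E_i \rangle = \langle E_i, E_i \rangle$. Thus the second inequality is equivalent to $\langle D_1, D_2 \rangle \geq 0$, the higher-dimensional analogue of the statement that a $(-1)$-curve meets every other irreducible curve nonnegatively. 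I would prove this by induction on $d_1 = \deg D_1$, using that a $(-1)$-divisor is by definition in the $W$-orbit of some $E_j$. When $d_1 = 0$ we have $D_1 = E_j$ and $\langle E_j, D_2 \rangle = \mult_{p_j}(D_2) \geq 0$. When $d_1 \geq 1$ (which forces $r \geq n+1$, since otherwise $W$ contains no Cremona reflection and the orbit of $E_j$ consists only of exceptional classes), the orbit structure provides a standard Cremona transformation $\psi$ centered at $n+1$ of the points with $\deg \psi_\ast D_1 < d_1$; as $\psi$ preserves the form, $\langle D_1, D_2 \rangle = \langle \psi_\ast D_1, \psi_\ast D_2 \rangle$, and $\psi_\ast D_2$ is again effective — equal to some $E_i$ in the case that $D_2$ is the contracted hyperplane — so the inductive hypothesis applies.

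The main obstacle is the bookkeeping in this last induction. One must verify that some Cremona strictly decreases $d_1$, which holds because $\deg$ is bounded below on the finite set of orbit elements of each degree and is minimized only on the exceptional classes; and one must check that effectivity of the second factor is preserved at every stage, including when it is contracted to an exceptional divisor, so that the base-case multiplicity $\mult_{p_j}$ is genuinely nonnegative and the two factors remain distinct. A secondary point requiring care is pinning down the normalization $\langle H, H \rangle = n-1$ of the $W$-invariant form and confirming that every $(-1)$-divisor satisfies $\langle D_1, D_1 \rangle = -1$, matching $E_j$.
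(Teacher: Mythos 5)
Your proposal is correct and follows essentially the same route as the paper: both reduce the statement, via Lemma~\ref{maininequality}, to the nonnegativity of the $W$-invariant pairing with $(H,H)=n-1$, $(E_i,E_i)=-1$, and both establish $(D_1,D_2)\geq 0$ by using the $W$-action to move $D_1$ to an exceptional class, where the pairing computes a multiplicity. Your degree-decreasing induction is just an unpacking of the paper's one-line "apply a suitable element of $W$ and assume $D_1=E_1$," and your explicit verification of the first inequality $m_in_i\leq d_1d_2$ is a detail the paper leaves implicit via Lemma~\ref{lem-ab}.
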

\begin{proof}
Consider the pairing on \(N^1(X^n_r)\) defined by \((H, H) = n-1\), \((H , E_i) = 0\), \((E_i , E_i) = -1\), and \((E_i , E_j) = 0\) if \(i \neq j\).  This pairing is invariant under the action of \(W\) on \(N^1(X^n_r)\)~\cite{Mukai}, \cite{Dolgachev}.

We first show that \((D_1 , D_2) \geq 0\).  Since the pairing (,) is invariant under the action of \(W\) on \(N^1(X^n_r)\), we may apply a suitable element of \(W\) and assume that \(D_1 = E_1\) is an exceptional divisor.
If \(D_2 = E_j\) is an exceptional divisor different from \(E_1\), then \((D_1, D_2) = 0\).  Otherwise, \([D_2] = aH - \sum_{i=1}^r b_i E_i\), with \(b_i \geq 0\), in which case \((D_1 , D_2) = b_i \geq 0\).

For the second part, write \(D_1 = aH - \sum_{i=1}^r b_i E_i\) and \(D_2 = cH - \sum_{i=1}^r d_i E_i\).  That \((D_1, D_2) \geq 0\) yields
\[
(n-1) ac \geq \sum_{i=1}^r b_i d_i.
\]
By Lemma \ref{maininequality}, this means that the codimension-\(2\) cycle \([D_1 \cap D_2] = ac H - \sum_{i=1}^r b_i d_i\) is contained in the span of linear cycles.
\end{proof}

\subsection*{Easy Lemmas} Here we collect a couple of geometric lemmas that we will use repeatedly.

\begin{lemma}\label{lem-intersectnef}
Suppose that \(E\) is an effective divisor and that \(P\) is a nef divisor.  If \(Y\) is an irreducible, effective variety of dimension \(k\) which is not contained in \(E\), then \(P^{k-1} \cdot E \cdot Y \geq 0\).
\end{lemma}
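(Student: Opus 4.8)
The plan is to reduce the statement to the standard fact that the intersection of an effective variety with a nef class, when the variety is not contained in the base locus of that intersection, gives a nonnegative number. The key geometric input is that $Y \not\subset E$, so the restriction of $E$ to $Y$ is an honest effective Cartier divisor (or at least a well-defined effective cycle) on $Y$, rather than all of $Y$.

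First I would pass to the subvariety $Y$ itself and consider the intersection product $P^{k-1} \cdot E \cdot Y$ as computed on $Y$. Since $Y$ has dimension $k$, this is a number, and I would interpret it as $(P|_Y)^{k-1} \cdot (E|_Y)$, the intersection of the restricted classes on the $k$-dimensional variety $Y$. The point is that $P|_Y$ is nef on $Y$ (nefness restricts to subvarieties), and $E|_Y$ is an effective divisor class on $Y$: this is exactly where the hypothesis $Y \not\subset E$ is used, since it guarantees $E \cap Y$ is a proper closed subscheme of $Y$, hence an effective Cartier divisor whose class is $E|_Y$. Writing $E|_Y = \sum_j a_j Z_j$ as a sum of its irreducible components with $a_j > 0$, each $Z_j$ is an effective $(k-1)$-cycle on $Y$.

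The main step is then to observe that $P^{k-1} \cdot Z_j \geq 0$ for each component $Z_j$, because $P$ is nef and $Z_j$ is an effective $(k-1)$-cycle: the intersection of $k-1$ nef classes with an effective cycle of complementary dimension is nonnegative, which is one of the defining properties of nef classes (Kleiman's criterion, or simply the definition of nef as nonnegative on all effective curves extended multiplicatively). Summing with the positive coefficients $a_j$ gives
\[
P^{k-1} \cdot E \cdot Y = \sum_j a_j \, (P^{k-1} \cdot Z_j) \geq 0,
\]
which is the claim.

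I do not expect any serious obstacle here; this is a genuinely easy lemma as the paper's heading indicates. The only point requiring care is making sure the intersection-theoretic manipulations are legitimate when $Y$ is possibly singular: rather than appealing to a smooth intersection theory, I would work with the refined intersection of the Cartier divisor $E$ with the cycle $Y$, using that $Y \not\subset E$ so that $E \cdot Y$ is represented by an effective cycle supported on $E \cap Y$, and then pair with the nef class $P^{k-1}$ via the projection formula. The hypothesis that $P$ is nef is used precisely to ensure nonnegativity against the resulting effective cycle, so no positivity of $E$ itself is needed beyond effectivity.
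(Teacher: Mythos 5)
Your proposal is correct and is essentially the paper's own argument: the hypothesis $Y \not\subset E$ makes $E \cdot Y$ a (possibly empty) effective cycle of dimension $k-1$, and nefness of $P$ then gives $P^{k-1} \cdot E \cdot Y \geq 0$. Your write-up simply spells out the same two steps in more detail.
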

\begin{proof}
The intersection \(E \cdot Y\) is a (possibly empty) cycle of dimension $k-1$ by assumption. Since \(P\) is nef, it follows that \(P^{k-1} \cdot E \cdot Y \geq 0\).
\end{proof}

\begin{lemma}\label{lem-multiplicityline}
Let $Y \subset X^n_\Gamma$ be an irreducible variety of dimension $k$, not contained in any exceptional divisor $E_i$, with class $aH_k - \sum_{i=1}^e b_i E_{i,k}$. If $b_i + b_j > a$ for two indices $i \not= j$, then $Y$ contains the line through $p_i$ and $p_j$ with multiplicity at least $b_i + b_j -a$. 
\end{lemma}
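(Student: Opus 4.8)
The plan is to reduce the statement to the case of a plane curve, where it follows from Bézout's theorem. Throughout, set $Z = \pi(Y) \subset \P^n$; by Lemma~\ref{lem-ab} this is an irreducible variety of degree $a$ with $\mult_{p_i} Z = b_i$ and $\mult_{p_j} Z = b_j$, and ``$Y$ contains the line $\ell = \overline{p_i p_j}$ with multiplicity $m$'' will mean that $\ell \subseteq Z$ and the multiplicity of $Z$ at the generic point of $\ell$ is at least $m$ (equivalently, the coefficient of $\ell$ in a general curve section is at least $m$). The inequality $b_i + b_j > a$ already forces $\ell \subseteq Z$: if not, then $Z \cap \ell$ is finite of length at most $a$ by Bézout, yet the local intersection multiplicity at $p_i$ (resp.\ $p_j$) is at least $\mult_{p_i} Z = b_i$ (resp.\ $b_j$), so the length is at least $b_i + b_j > a$, a contradiction. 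The content of the lemma is the quantitative bound on the multiplicity, and this is what the reduction is designed to capture.

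First I would cut $Z$ down to a curve while retaining all the relevant numerical data. Choose a general linear subspace $\Lambda \cong \P^{n-k+1}$ containing $\ell$, obtained as the intersection of $k-1$ general hyperplanes through $\ell$, and set $C = Z \cap \Lambda$. Then $C$ is an effective $1$-cycle of degree $a$ in $\Lambda$ containing $\ell$. The key points are that, for general such $\Lambda$, the section preserves multiplicities: $\mult_{p_i} C = b_i$, $\mult_{p_j} C = b_j$, and the coefficient of $\ell$ in $C$ equals the multiplicity of $Z$ along $\ell$. Each of these is a tangent-cone computation: the projectivized tangent cone $\P C_{p} Z \subseteq \P^{n-1}$ has dimension $k-1$ and degree $\mult_p Z$, and its intersection with the general codimension-$(k-1)$ space of directions cut out by $\Lambda$ (which is general subject only to passing through the fixed point $[\ell]$) again has degree $\mult_p Z$.

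Next I would project $C$ to a plane. Let $\rho : \Lambda \dashrightarrow \P^2$ be the projection from a general center $\P^{n-k-2}$ disjoint from $\ell$, $p_i$, and $p_j$. Then $\bar C = \rho_* C$ is a plane curve of degree $a$; the line $\ell$ maps isomorphically to a line $\bar\ell$, so the coefficient of $\bar\ell$ in $\bar C$ equals that of $\ell$ in $C$; and for a general center $\rho$ is birational near $p_i, p_j$ with $\rho^{-1}(\bar p_i) \cap C = \{p_i\}$, so that $\mult_{\bar p_i} \bar C = b_i$ and $\mult_{\bar p_j} \bar C = b_j$ (general lines through $\bar p_i$ pull back to general enough hyperplane sections through $p_i$). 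Writing $m$ for the common coefficient of $\ell$ in $C$ and of $\bar\ell$ in $\bar C$, and decomposing $\bar C = m \bar\ell + \bar C'$ with $\bar C'$ not containing $\bar\ell$, Bézout in $\P^2$ gives $\bar C' \cdot \bar\ell = \deg \bar C' = a - m$. Since $\mult_{\bar p_i} \bar C' = b_i - m$ and $\mult_{\bar p_j} \bar C' = b_j - m$, and the local intersection multiplicity of $\bar C'$ with $\bar\ell$ at each of these points is at least the multiplicity of $\bar C'$ there, we obtain $a - m \geq (b_i - m) + (b_j - m)$, i.e.\ $m \geq b_i + b_j - a$, as desired.

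The main obstacle is the bookkeeping in the two reduction steps, namely verifying that a general linear section through $\ell$ and a general projection both preserve the degree, the two point-multiplicities, and the multiplicity along $\ell$. These are standard facts about general position and tangent cones, but they must be applied with care because in each case the construction is constrained: the section hyperplanes all contain $\ell$, hence pass through the fixed point $[\ell]$ in the space of directions at $p_i$ and $p_j$, and the projection is required to fix $\ell$. The point in each case is that this single extra incidence condition does not drop the relevant degree count, so all of the numerical invariants survive the reduction to the plane.
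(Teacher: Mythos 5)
Your argument is correct, and it shares the paper's opening move: slicing with $k-1$ general hyperplanes through the line $l_{i,j}$ to reduce to a $1$-cycle. Where you genuinely diverge is in how the inequality is then extracted. The paper never leaves the blow-up: the base locus of the linear system $|H-E_i-E_j|$ is the proper transform of $l_{i,j}$, so $(H-E_i-E_j)^{k-1}\cdot Y$ is represented by an effective $1$-cycle $Z=\alpha\, l_{i,j}+u$ with $u$ not containing $l_{i,j}$; since $(H-E_i-E_j)\cdot l_{i,j}=-1$ while $(H-E_i-E_j)\cdot u\ge 0$, the single numerical computation $(H-E_i-E_j)^{k}\cdot[Y]=a-b_i-b_j<0$ immediately forces $\alpha\ge b_i+b_j-a$. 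Essentially all of the tangent-cone and projection bookkeeping that you identify as the main obstacle --- checking that general sections through $l_{i,j}$ and a further general projection to $\P^2$ preserve the degree, the multiplicities at $p_i$ and $p_j$, and the coefficient of the line --- is absorbed into that one intersection number, because the class $aH_k-\sum_i b_iE_{i,k}$ already encodes exactly the numerical data you are carrying through your reduction by hand. Your route via classical B\'ezout in the plane is more elementary in the tools it invokes and is sound once the genericity claims are verified (they are standard), but the extra projection step buys nothing here; the only genericity either proof really needs at the end is the common step identifying the coefficient of $l_{i,j}$ in a general linear section with the multiplicity of $Y$ along $l_{i,j}$.
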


\begin{proof}
The  base locus of the linear system $|H-E_i -E_j|$ is the line $l_{i,j}$ spanned by $p_i$ and $p_j$. Consequently, the intersection $(H - E_i -E_j)^{k-1} \cdot Y$ is an effective $1$-cycle $Z$. Express $$Z = \alpha l_{i,j} + u,$$ where $u$ is a $1$-cycle not containing $l_{i,j}$. Since $$-\alpha  \leq (H - E_i -E_j) \cdot Z =  a - b_i -b_j < 0,$$ we conclude that $\alpha \geq b_i + b_j -a$. Hence, $Y$ must have multiplicity at least $b_i + b_j -a$ at every point of $l_{i,j}$.
\end{proof}

\section{Points in linearly general position}\label{sec-lingen}
In this section, we study $\Effb_k(X_{\Gamma}^n)$ when the cardinality of $\Gamma$ is small and the points of $\Gamma$ are in linearly general  position. Our main theorem is the following.
\begin{theorem}\label{lingen}
Let $\Gamma$ be a set of $r$ points in $\P^n$ in linearly general  position. If $$r \leq \max\left(n+2, n + \frac{n}{k} \right),$$  then $\Effb_k(X_{\Gamma}^n)$ is linearly generated.
\end{theorem}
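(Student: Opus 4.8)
\emph{Reduction to a multiplicity bound.} By Lemma~\ref{lem-ab}, every irreducible effective $k$-cycle is either contained in some $E_i$, in which case its class is a positive multiple of the generator $E_{i,k}$, or has class $aH_k - \sum_{i=1}^r b_iE_{i,k}$ with $a \geq b_i \geq 0$ and $b_i = \mult_{p_i}\pi(Y)$. For the second type, the combinatorial Lemma~\ref{maininequality}, applied with $k+1$ in place of $n$ (its proof uses that integer only as the bound on $\abs{I}$ and in the third inequality, so the same induction applies verbatim), shows that the class lies in the cone spanned by the $E_{i,k}$ together with the classes $H_k - \sum_{i\in I}E_{i,k}$ with $\abs{I} \leq k+1$ --- that is, in the linearly generated cone --- precisely when $\sum_{i=1}^r b_i \leq (k+1)a$. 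Equivalently, as recorded after Lemma~\ref{maininequality}, it suffices to show that $(k+1)H_{n-k} - \sum_{i=1}^r E_{i,n-k}$ is nef. Thus the whole theorem reduces to the estimate
\[
\sum_{i=1}^r b_i \leq (k+1)\,a
\]
for every irreducible $k$-dimensional $Y \subset \P^n$ of degree $a$ with multiplicities $b_i = \mult_{p_i}Y$ at the points of $\Gamma$.

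\emph{Reduction to nondegenerate $Y$.} I would induct on $n$. If $Y$ spans a proper subspace $\P^m \subsetneq \P^n$, then every point with $b_i>0$ lies on this $\P^m$, so linear general position forces at most $m+1$ such points. Regarding $Y$ as a cycle on the blow-up of $\P^m$ (Convention~\ref{conv-terrible}) and using $m+1 \leq \max(m+2, m+\tfrac{m}{k})$, the inequality follows from the theorem in dimension $m<n$; the base case $m=k$ is a linear space, where $a=1$ and $\sum b_i \leq k+1$ by linear general position. Hence I may assume $Y$ is nondegenerate, so that the classical lower bound $a \geq n-k+1$ on the degree of a nondegenerate variety is available.

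\emph{The core estimate.} For a subset $I$ of $n-k+1$ of the points, their span is a plane $L_I \cong \P^{n-k}$ of complementary dimension; if the intersection $Y \cap L_I$ is proper, then it is finite of degree $a$ and contains each $p_i$ ($i\in I$) with multiplicity at least $b_i$, giving the partial bound $\sum_{i\in I}b_i \leq a$. A short computation shows $\max(n+2, n+\tfrac{n}{k}) \leq (k+1)(n-k+1)$ for all $1\leq k\leq n-1$, so in the stated range one can cover all $r$ points by at most $k+1$ such $(n-k+1)$-subsets; summing the partial bounds yields $\sum_{i=1}^r b_i \leq (k+1)a$.

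\emph{The main obstacle.} The difficulty lies entirely in the properness hypothesis. The section $Y \cap L_I$ may acquire a positive-dimensional component, which then lies inside $L_I$ and inside $Y$, so that $Y$ contains a positive-dimensional variety spanned by $n-k+1$ of the points. Lemma~\ref{lem-multiplicityline} detects the simplest such components --- lines forced when $b_i+b_j>a$ --- and in general these excess components must be split off from $Y$ and handled by a separate induction. Since the purely combinatorial covering would reach the larger bound $(k+1)(n-k+1)$, it is exactly the analysis of when enough subsets can be chosen with proper sections (together with the excess bookkeeping) that forces the more restrictive $\max(n+2, n+\tfrac{n}{k})$, consistent with the authors' expectation that the sharp bound is $2n+1-k$.
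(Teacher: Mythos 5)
Your opening reduction to the inequality $\sum_{i=1}^r b_i \leq (k+1)a$ via Lemma~\ref{lem-ab} and Lemma~\ref{maininequality} is correct and is exactly how the paper begins, and the reduction to nondegenerate $Y$ is fine. But the core of the argument is not a proof: the partial bounds $\sum_{i\in I}b_i \leq a$ are derived only under the hypothesis that $Y$ meets the complementary linear space $L_I\cong\P^{n-k}$ spanned by the $n-k+1$ points indexed by $I$ in a finite set, and you concede in your last paragraph that establishing this properness, or doing the "excess bookkeeping" when it fails, is the main obstacle and you leave it undone. This is not a loose end that routine care would fix. If properness held for enough subsets, your covering argument would prove linear generation for all $r\leq(k+1)(n-k+1)$; since $(k+1)(n-k+1)-(2n-k+1)=(k-1)(n-k)$, this strictly exceeds $2n-k+1$ whenever $k\geq 2$, and Proposition~\ref{prop-lingenSharp} exhibits linearly general configurations with $r>2n-k+1$ (points on a cone over a rational normal curve) for which linear generation fails. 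Those are precisely examples in which $Y$ contains positive-dimensional subvarieties spanned by points of $\Gamma$, i.e., in which your sections $Y\cap L_I$ are improper. So the entire content of the theorem sits in the step you have skipped, and the proposal does not establish the statement.

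For contrast, the paper sidesteps this issue by slicing with a divisor rather than a complementary-dimensional plane: it takes $L\cong\P^{n-1}$ through the $n$ points of largest multiplicity, so that either $Y\subseteq L$ (handled by the toric case, Lemma~\ref{toriccase}) or $Y\cap L$ is automatically a proper effective $(k-1)$-cycle. Linear generation of $[Y\cap L]$ on the blow-up of $L$ at $n$ points (Lemma~\ref{toriccase} again) gives $ka\geq\sum_{i=1}^n b_i$, hence $b_j\leq ka/n$ for $j>n$; combined with an explicit expression of $[Y\cap L]$ as a sum of $(k-1)$-planes and a cone construction lifting them to $k$-planes through the remaining points, this yields the $n+\frac{n}{k}$ bound. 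The residual case $r=n+2$ is then handled by a separate induction on $n$ using rational normal curves for divisors and Lemma~\ref{lem-multiplicityline} to force the line $l_{n+1,n+2}$ into $Y$ and transfer its multiplicity to the hyperplane slice. To salvage your approach you would need an analogous dichotomy for the planes $L_I$, which is exactly what is missing.
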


The proof will be by induction on $k$ and $n$. We first single out the case $k=1$.

\begin{lemma}\label{lem-baseCurve}
Let $\Gamma$ be a set of $r \leq 2n$ points in $\P^n$ in linearly  general position. Then $\Effb_1(X_{\Gamma}^n)$ is linearly generated.
\end{lemma}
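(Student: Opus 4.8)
The plan is to reduce the statement to a single numerical inequality about curves in \(\P^n\) and then to prove that inequality by a Bézout argument carried out inside the linear span of the curve.

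First I would reduce to a nefness statement. By the discussion following Lemma~\ref{maininequality}, the cone \(\Effb_1(X_\Gamma^n)\) is linearly generated as soon as the divisor \(D = 2H - \sum_{i=1}^r E_i\) is nef; that implication is combinatorial and uses only Lemma~\ref{lem-ab}, so it applies to an arbitrary configuration \(\Gamma\), not only to very general points. Concretely, let \(C\) be an irreducible curve on \(X_\Gamma^n\). If \(C\) lies in some exceptional divisor \(E_i\), then \([C] = b_i E_{i,1}\) is already a linear generator and \(D \cdot C = b_i > 0\). Otherwise Lemma~\ref{lem-ab} gives \([C] = a H_1 - \sum_{i=1}^r b_i E_{i,1}\) with \(a \geq b_i \geq 0\), where \(a = \deg \bar C\) and \(b_i = \mult_{p_i} \bar C\) for \(\bar C := \pi(C)\); here \(D \cdot C = 2a - \sum_i b_i\). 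Everything thus comes down to the inequality
\[
\sum_{i=1}^r b_i \leq 2a,
\]
for then Lemma~\ref{maininequality}, applied with the pair-of-points generators \(H_1 - E_{i,1} - E_{j,1}\) (that is, with parameter \(2\) in its third hypothesis), writes \([C]\) as a nonnegative combination of the classes \(E_{i,1}\) and \(H_1 - E_{i,1} - E_{j,1}\).

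Second, I would prove \(\sum_i b_i \leq 2a\) by cutting \(\bar C\) with hyperplanes through the points. Let \(\Lambda \cong \P^m\) be the linear span of \(\bar C\), so that \(\bar C\) is nondegenerate in \(\Lambda\) and every \(p_i\) with \(b_i > 0\) lies in \(\Lambda\). The number \(N\) of such points satisfies \(N \leq 2m\): if \(m = n\) this is just \(N \leq r \leq 2n\), while if \(m \leq n-1\) then linear general position forbids \(m+2\) points in a \(\P^m\), so \(N \leq m+1 \leq 2m\) (using \(m \geq 1\)). Hence these points can be partitioned into two subsets \(S_1, S_2\), each of size at most \(m\). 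A set of at most \(m\) points in linearly general position spans a subspace of dimension at most \(m-1\) inside \(\Lambda\), so it lies in some hyperplane \(\Pi_j \cong \P^{m-1}\) of \(\Lambda\), and \(\bar C \not\subseteq \Pi_j\) because \(\bar C\) spans \(\Lambda\). Bézout inside \(\Lambda\) then gives a zero-cycle \(\bar C \cdot \Pi_j\) of degree \(a\), and since the local intersection multiplicity at each \(p_i \in S_j\) is at least \(\mult_{p_i} \bar C = b_i\), we obtain \(\sum_{i \in S_j} b_i \leq a\). Summing the two inequalities yields \(\sum_i b_i \leq 2a\), as required.

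The reduction to nefness and the appeal to Lemma~\ref{maininequality} are formal, so the one place a hypothesis is used essentially is the bound \(N \leq 2m\); I expect the interplay between the degenerate and nondegenerate cases to be the only real obstacle. When \(\bar C\) is nondegenerate, \(N \leq 2n\) is exactly the hypothesis \(r \leq 2n\) and the two cutting hyperplanes are spanned by \(n\)-point subsets; when \(\bar C\) is degenerate one must instead use linear general position to control how many \(p_i\) lie on the proper subspace \(\Lambda\) and then run the same argument one dimension lower inside \(\Lambda\). The careful point is to guarantee, in every case, that the relevant points fit into two hyperplanes of \(\Lambda\) not containing \(\bar C\); once that bookkeeping is in place the inequality follows immediately.
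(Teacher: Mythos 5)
Your proof is correct, and it reaches the key inequality \(2a \geq \sum_i b_i\) by a different mechanism than the paper. Both arguments share the same skeleton: reduce via Lemma~\ref{lem-ab} to an irreducible curve not contained in an exceptional divisor, establish \(2a \geq \sum_{i=1}^r b_i\), and conclude by Lemma~\ref{maininequality}. The paper gets the inequality by quoting the fact that any \(r \leq 2n\) points in linearly general position are cut out by quadrics, so that some quadric through \(\Gamma\) avoids the given curve and intersecting with its proper transform (of class \(2H - \sum_i E_i\)) gives the bound in one line. You instead build, for each curve \(\bar C\), a reducible quadric adapted to it: the union of two hyperplanes of the span \(\Lambda\) of \(\bar C\), each containing at most half of the points where \(\bar C\) has positive multiplicity, followed by B\'ezout in \(\Lambda\) and the standard fact that the local intersection number of a hyperplane with a curve at a point dominates the multiplicity of the curve there. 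Your route is more self-contained (it avoids the base-locus input from Harris) and it makes the degenerate case explicit, using linear general position to bound the number of points on the proper subspace \(\Lambda \cong \P^m\) by \(m+1 \leq 2m\); the paper's route is shorter but outsources exactly this kind of bookkeeping to the quoted result. The one step you should state explicitly if you write this up is the multiplicity inequality \(i(p_i; \bar C \cap \Pi_j) \geq \mult_{p_i} \bar C\), but that is standard and the rest of your case analysis is airtight.
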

\begin{proof}
Let $B$ be an irreducible curve. By Lemma \ref{lem-ab}, we may assume that $B$ is not contained in any of the exceptional divisors and has class $aH_1 - \sum _{i=1}^r b_i E_{i,1}$ with $a \geq b_i \geq 0$. Any $r \leq 2n$ points in  linearly general position are cut out by quadrics \cite[Lecture 1]{Harris}. Consequently, there is a quadric whose proper transform has class $[Q]= 2H - \sum_{i=1}^{r} E_i$ in $X_{\Gamma}^n$ and does not contain $B$. Hence, $B$  has nonnegative intersection with $Q$ and satisfies \(2a \geq \sum_{i=1}^{r} b_i\). By Lemma \ref{maininequality},  the class of $B$ is spanned by the classes of lines.
\end{proof}

Next, we  study the case when $r \leq n+1$. In this case, $X_{\Gamma}^n$ is toric and the effective cones are generated by torus-invariant cycles (see e.g.\ \cite[Prop.\ 3.1]{Li}). For the reader's convenience we will give a simple independent proof. 
\begin{lemma}
\label{toriccase}
Let $\Gamma$ be a set of $r \leq n+1$ linearly general points in $\P^n$. The cone $\Effb_k(X_{\Gamma}^n)$ is linearly generated for any $k$.
\end{lemma}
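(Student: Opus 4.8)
The plan is to reduce everything to the nef criterion established right after Lemma~\ref{maininequality}: the cone $\Effb_k(X_\Gamma^n)$ is linearly generated if and only if the class $(k+1)H_{n-k} - \sum_{i=1}^r E_{i,n-k}$ is nef. So my goal is to show that when $r \leq n+1$ and the points are linearly general, this class intersects every irreducible $k$-dimensional subvariety $Y$ nonnegatively. By Lemma~\ref{lem-ab}, I may assume $Y$ is not contained in any exceptional divisor, so its class is $aH_k - \sum_i b_i E_{i,k}$ with $a \geq b_i \geq 0$, and the intersection number I must bound is $(k+1)a - \sum_{i=1}^r b_i$. Thus the whole problem becomes the purely numerical inequality
\[
(k+1)\,a \geq \sum_{i=1}^r b_i,
\]
which by Lemma~\ref{maininequality} (with $n$ there replaced by $k+1$) is exactly what guarantees membership in the linearly generated cone.

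First I would verify the inequality geometrically. The key point is that $r \leq n+1$ linearly general points span at most a $\P^n$ but are projectively equivalent to coordinate points, so through any $k+1$ of them there passes a hyperplane, and more usefully, for any point $p_i$ the divisor $H - E_i$ is base-point-free (it is the projection from $p_i$). I would intersect $Y$ with base-point-free divisors of the form $H - E_i$ to control the multiplicities. Concretely, since $Y$ has dimension $k$, I would pick $k$ of the points, say $p_{i_0},\dots,p_{i_{k-1}}$, and consider the intersection of $Y$ with the classes $H - E_{i_0}, \dots, H - E_{i_{k-1}}$; because each such class is nef (indeed base-point-free), the product against $Y$ is nonnegative, yielding linear constraints relating $a$ and the $b_i$. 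The cleanest route, though, exploits that $r \leq n+1$ linearly general points lie on a web of quadrics and indeed are cut out by hyperplanes through complementary subsets: there is a hyperplane passing through any prescribed $k+1$ of the points but avoiding the rest if we choose it generically among those containing a fixed $\P^{k}$ through them.

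The cleanest argument I would actually write down uses a partition of the points. Since $r \leq n+1$, I can group the indices $\{1,\dots,r\}$ into $k+1$ disjoint blocks $S_0,\dots,S_k$, each of size at most $\lceil r/(k+1)\rceil \leq \lceil (n+1)/(k+1)\rceil \leq n$ when $k \geq 1$ (here the linearly general hypothesis is what lets me find, for each block $S_j$ of size at most $n$, a hyperplane $H_{(j)}$ through exactly those points, so that its proper transform has class $H - \sum_{i \in S_j} E_i$ and is effective and does not contain $Y$ for a generic such choice). Intersecting $Y$ with a codimension-$k$ linear section built from these $k+1$ hyperplanes, or rather summing the $k+1$ inequalities $(H - \sum_{i \in S_j} E_i) \cdot (\text{nef})^{k-1} \cdot Y \geq 0$ coming from Lemma~\ref{lem-intersectnef}, gives $(k+1)a - \sum_{i=1}^r b_i \geq 0$ after the blocks cover every index exactly once. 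Then Lemma~\ref{maininequality} finishes the job.

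The main obstacle I anticipate is producing the required effective divisors $H - \sum_{i \in S_j}E_i$ and ensuring they do not contain $Y$, together with the subtle point that to invoke Lemma~\ref{lem-intersectnef} I need the other factors to be genuinely nef. The divisor $H - \sum_{i \in S_j}E_i$ need not be nef (it is only effective), so I cannot naively take products of such classes; the correct move is to intersect $Y$ with $k-1$ copies of a single base-point-free class such as $H$ or $H-E_i$ and then one copy of $H - \sum_{i\in S_j}E_i$, summing over $j$. Handling the case where $Y$ happens to lie inside one of these linear divisors requires the usual fallback: either $Y$ already has small class and the inequality is immediate, or one reduces to a blow-up of a smaller projective space via Convention~\ref{conv-terrible} and the cone construction, proceeding by induction on $n$. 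Verifying that the blocks can always be chosen so every hyperplane avoids $Y$ for a generic member of the relevant linear system, and that linearly general position supplies enough hyperplanes, is the delicate bookkeeping step, but it is forced by the constraint $r \leq n+1$ and poses no conceptual difficulty.
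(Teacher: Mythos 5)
Your reduction of the lemma to the single inequality \((k+1)a \geq \sum_{i=1}^r b_i\) (via Lemma~\ref{lem-ab} and Lemma~\ref{maininequality}) is correct, but the intersection-theoretic argument you propose for that inequality does not work. On \(X_\Gamma^n\) one has \(H \cdot E_i = 0\) and \(E_i \cdot E_j = 0\) for \(i \neq j\), so in a product of \(k\) divisor classes each of the form \(H - \sum_{i \in S} E_i\), the class \(E_{i,n-k}\) survives only when \(p_i\) lies on \emph{every} factor: the product equals \(H_{n-k} - \sum_{i \in \bigcap_m S_m} E_{i,n-k}\). Concretely, with \(P = H\) your proposed sum of inequalities \(\sum_j (H - \sum_{i\in S_j}E_i)\cdot P^{k-1}\cdot Y \geq 0\) reads \((k+1)a \geq 0\); with \(P = H - E_{i_0}\) it reads \((k+1)a \geq b_{i_0}\). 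Neither is anywhere near \((k+1)a \geq \sum_{i=1}^{r} b_i\); a multiplicity \(b_i\) is detected only if every divisor in the product passes through \(p_i\), and no choice of nef factors fixes this (for instance the genuinely nef class \(2H - \sum_i E_i\) raised to the \(k\)-th power only yields \(2^k a \geq \sum_i b_i\), which is strictly weaker than what is needed for \(k \geq 2\)). So the ``cleanest argument'' collapses at the bookkeeping step you flagged as routine.

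The paper's proof gets the sharp inequality by a mechanism you mention only as a fallback: induction on the ambient dimension via restriction to a subvariety, not intersection numbers on \(X\). After reducing to \(r = n+1\) and ordering \(b_1 \geq \cdots \geq b_{n+1}\), one takes \(L\) to be the proper transform of the hyperplane through \(p_1,\dots,p_n\). If \(Y \subset L\) one is done by induction on \(n\); otherwise \(Y \cap L\) is an effective \((k-1)\)-cycle on \(L \cong X_{\Gamma'}^{n-1}\) whose class \(aH_{L,k-1} - \sum_{i=1}^n b_i E_{L,i,k-1}\) retains \emph{all} \(n\) multiplicities. Knowing by induction (with base cases Theorem~\ref{divisorcase} for divisors and Lemma~\ref{lem-baseCurve} for curves) that the effective cone of the smaller blow-up is linearly generated forces \(ka \geq \sum_{i=1}^n b_i\), and then \(a \geq b_{n+1}\) gives \((k+1)a \geq \sum_{i=1}^{n+1} b_i\). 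The essential point is that restricting the cycle to \(L\) and invoking the inductively known structure of \(\Effb_{k-1}(L)\) preserves the multiplicity data that pure intersection products on \(X\) destroy; to repair your write-up you would need to make this restriction-and-induct step the main argument rather than a contingency.
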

\begin{proof}
Let $\Gamma' \subset \Gamma$ be two sets with cardinality $r$ and $n+1$, respectively. Then the proper transform of any effective cycle in $X_{\Gamma'}^n$ is an effective cycle in $X_{\Gamma}^n$. Consequently, if $\Effb_k(X_{\Gamma}^n)$ is linearly generated, then $\Effb_k(X_{\Gamma'}^n)$ is also linearly generated. Hence, without loss of generality, we may assume that $r=n+1$. Let $Y$ be an irreducible $k$-dimensional variety in $X_{\Gamma}^n$ with class $$[Y]=aH_k -\sum_{i=1}^{n+1} b_i E_{i,k}.$$  By Lemma \ref{lem-ab}, we may assume that $Y$ is not contained in an exceptional divisor and that $a \geq b_i \geq 0$. We proceed by induction on $k$ and $n$. Up to reordering the points, we may assume $b_1 \geq b_2 \geq \cdots \geq b_{n+1}$. Let $L$ be the proper transform of the $\P^{n-1}$ spanned by the first $n$ points.  First, suppose $Y$ is contained in $L$. Since $L$ is isomorphic to the blow-up of $\P^{n-1}$ in $n$ points, by induction on $n$ with base case Theorem \ref{divisorcase}, we conclude that the class of $Y$ is in the span of linear spaces. Otherwise, $Y \cap L$ is an effective cycle of dimension $k-1$ in $L$. Write $H_{L, k-1}$ and $E_{L, i,k-1}$ for the restriction of $H_k$ to $L$ and the $(k-1)$-dimensional linear space in the exceptional divisor $E_{L,i}$ of the blow-up of $L$ at $p_i$. Then we have $$[Y \cap L] = aH_{L, k-1} -\sum_{i=1}^{n} b_i E_{L, i, k-1}.$$ By induction on $n$ with base case Lemma \ref{lem-baseCurve}, $Y \cap L$ is in the span of linear spaces. In particular, $ka \geq \sum_{i=1}^n b_i.$ Hence, $(k+1)a \geq \sum_{i=1}^{n+1} b_i.$ By Lemma \ref{maininequality}, the class of $Y$ is in the span of linear spaces. This concludes the proof. 
\end{proof}

We can now complete the proof of Theorem \ref{lingen}.
\begin{proof}[Proof of Theorem \ref{lingen}]
We preserve the notation from the proof of Lemma \ref{toriccase} and argue similarly. Suppose that $Y$ is an irreducible $k$-dimensional variety on $X_{\Gamma}^n$ with class
$$[Y] = a H_k - \sum_{i=1}^r b_i E_{i,k}.$$ We may assume that $Y$ is not contained in an exceptional divisor and, by reordering the points, we have that $$a \geq b_1 \geq \cdots \geq b_r \geq 0.$$  Let $L$ be the $\P^{n-1}$ passing through the points $p_1, \ldots, p_n$. If $Y$ is contained in $L$, then its class is linearly generated by Lemma \ref{toriccase}. Otherwise, $Y \cap L$ is an effective cycle of dimension $k-1$ with class
\[
[Y\cap L] = aH_{L,k-1} - \sum_{i=1}^n b_i E_{L,i, k-1}
\]
which is also linearly generated by Lemma \ref{toriccase}. Therefore, the class \([Y \cap L]\) can be written as a combination of linear classes \(H_{L, k-1} - \sum_{\abs{I} = k} E_{L,i, k-1}\) and \(E_{L,i,k-1}\)
\[
[Y \cap L] = \sum_{j=1}^{a} \alpha_j \left(H_{L,k-1} -  \sum_{\abs{I} = k} E_{L,i, k-1}\right) + 
 \sum_{j=1}^{n} \beta_j E_{L, j, k-1}.
\]

Each of the classes in this sum is effective, with those on the left the classes of \(\P^{k-1}\) through \(k\) of the points in \(L\).  By taking cones over these classes, we obtain a \(\P^{k}\) on \(X\), passing through an additional one of the points \(p_i\) with \(i > n\).
Since there are  \(a\) planes available, if \(\sum_{i=n+1}^r b_i \leq a\), the class \(Y\) can be expressed as a sum of linear cycles.

Observe that $$a k \geq \sum_{i=1}^n b_i \geq n b_n, \ \mbox{and so} \ b_j \leq b_n \leq  \frac{a k}{n} \ \mbox{for} \ j \geq n.$$  This implies that if \((r-n)\frac{k}{n} \leq 1\) or equivalently if $r \leq n + \frac{n}{k}$, the classes of all effective cycles are in the span of the classes of linear spaces.

If $k \leq \frac{n}{2}$, then $n+2 \leq n + \frac{n}{k}$ and the theorem is proved. If $k > \frac{n}{2}$, then $n+1 < n + \frac{n}{k} < n+2$ and we need to settle the case  $r=n+2$. There is a rational normal curve through any $n+3$ points in  linearly general  position in $\P^n$ \cite[Lecture 1]{Harris}. Consequently, given an effective divisor $D$, there exists a  rational normal curve $C$ containing the points but not contained in $D$. Hence, $C \cdot D \geq 0$ and all effective divisors satisfy $na \geq \sum_{i=1}^{n+2} b_i$. We recover the linear generation result of Theorem \ref{divisorcase}. By Lemma \ref{lem-baseCurve}, the curve classes are also linearly generated.  By induction assume that for all $m< n$ and all $k < m$, the effective cone of $k$ cycles of the blow-up of $\P^m$ in $m+2$ linearly general points is linearly generated. We carry out the inductive step for $\P^n$. Let $Y, L$ be as above. By Lemma \ref{toriccase}, we may assume that $Y$ is not contained in $L$. If $b_{n+1} + b_{n+2} \leq a$, then we already proved that  the class of $Y$ is linearly generated. If $b_{n+1} + b_{n+2} > a$, then, by Lemma \ref{lem-multiplicityline}, $Y$ contains the line $l_{n+1, n+2}$ spanned by $p_{n+1}$ and $p_{n+2}$ with multiplicity at least $b_{n+1} + b_{n+2} - a$. Let $p_0$ denote the point of intersection $L \cap l_{n+1, n+2}$. Then the proper transform of $L \cap Y$ is an effective cycle in the blow-up of $L$ in $p_0, p_1, \dots, p_n$ with class $$a H_{L, k-1} - (b_{n+1} + b_{n+2} -a) E_{L,0,k-1} - \sum_{i=1}^n b_i E_{L, i, k-1}.$$ By induction on $n$, this class is linearly generated. Hence, $$ka \geq  b_{n+1} + b_{n+2} -a + \sum_{i=1}^n b_i, \quad \mbox{therefore} \quad (k+1) a \geq \sum_{i=1}^{n+2} b_i.$$ By Lemma \ref{maininequality}, the class of $Y$ is linearly generated. This concludes the proof. 
\end{proof}

\begin{example}\label{ex-linearSharp}
Lemma \ref{lem-baseCurve} is sharp in the sense that there exist sets $\Gamma$ of $r > 2n$ points in general linear position such that $\Effb_1(X_{\Gamma}^n)$ is not linearly generated. For example, let $\Gamma$ be  $r > 2n$ points on a rational normal curve $C$ in $\P^n$. Points on a rational normal curve are in general linear position \cite{Harris}. Then the class of the proper transform of $C$ is $$nH_1 - \sum_{i=1}^r E_{i,1}.$$ Since $r>2n$, this class  cannot be in the span of the classes of lines. In the next section, we will see that we can improve the bounds for linear generation exponentially if instead of assuming that $\Gamma$ is linearly general, we assume $\Gamma$ is a set of very general points in $\P^n$. 

More generally, let $Y$ be the cone over a rational normal curve of degree $n-k+1$ with vertex $V$ a $\P^{k-2}$. Let $\Gamma$ be the union of a set of $k-1$ general points $p_1, \dots, p_{k-1}$ in $V$ and a set of $r-k+1$ general points $p_k, \dots, p_r$ on $Y$. Then $\Gamma$ is in general linear position. The class of the proper transform of $Y$ is 
$$(n-k+1)H_k -\sum_{i=1}^{k-1} (n-k+1)E_{i,k} - \sum_{i=k}^r E_{i,k},$$ which cannot be in the span of linear spaces if $r > 2n-k+1$.  Consequently, we conclude the following.
\begin{proposition}\label{prop-lingenSharp}
There exists sets $\Gamma$ of $r>2n-k+1$ points in general linear position in $\P^n$ such that $\Effb_k(X_{\Gamma}^n)$ is not linearly generated. 
\end{proposition}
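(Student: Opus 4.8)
The construction is the cone already described in Example~\ref{ex-linearSharp}, and the plan is to confirm that it defines an irreducible effective $k$-cycle of the stated class and then to separate that class from the linear cone by a single linear functional. Fix a rational normal curve $C$ of degree $n-k+1$ spanning $\langle C\rangle\cong\P^{n-k+1}$, together with a disjoint complementary linear space $V\cong\P^{k-2}$, so that $\langle C\rangle$ and $V$ jointly span $\P^n$ (since $(n-k+1)+(k-2)+1=n$). Let $Y$ be the cone over $C$ with vertex $V$; it is irreducible of dimension $1+(k-2)+1=k$, and it is nondegenerate in $\P^n$. Take $\Gamma$ to consist of $k-1$ points general in $V$ and $r-k+1$ points general on $Y$ (for $k=1$ the vertex is empty and $Y=C$, recovering the rational normal curve case). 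I would first record the class $[\tilde Y]$ of the proper transform. The degree is $\deg Y=\deg C=n-k+1$, so $a=n-k+1$, and by Lemma~\ref{lem-ab}(2) the coefficient of $E_{i,k}$ equals $\mult_{p_i}Y$. A general point of $Y$ is smooth, giving $b_i=1$ for $i\ge k$, while the multiplicity of a cone at a general point of its vertex is the degree of its base, giving $b_i=n-k+1$ for $i\le k-1$; this yields exactly the class displayed in Example~\ref{ex-linearSharp}.

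Next I would verify that $\Gamma$ is in linearly general position, so that the proposition indeed concerns that case. It is enough to show that every subset of at most $n+1$ of the points is linearly independent. The vertex points are general in $V\cong\P^{k-2}$, hence independent; and when one builds up an arbitrary subset by adjoining the chosen points of $Y$ one at a time, each new point falls off the span of its predecessors, because $Y$ is nondegenerate while that span is a proper subspace of $\P^n$. Since only finitely many subsets occur, a general choice of points meets all of these open conditions simultaneously.

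For the separation, let $P=(k+1)H_{n-k}-\sum_{i=1}^r E_{i,n-k}$, the class whose nefness characterizes linear generation by the observation following Lemma~\ref{maininequality}, and set $\ell(\alpha)=P\cdot\alpha$ for $\alpha\in N_k(X_\Gamma^n)$. A direct computation with the intersection formulas gives $\ell(aH_k-\sum_i b_iE_{i,k})=(k+1)a-\sum_i b_i$, and every linear generator has $\ell\ge 0$: indeed $\ell(E_{i,k})=1$, while a $\P^k$ through $j\le k+1$ of the points has $\ell(H_k-\sum_{i\in I}E_{i,k})=(k+1)-j\ge 0$. For the cone, however,
\[
\ell([\tilde Y])=(k+1)(n-k+1)-(k-1)(n-k+1)-(r-k+1)=(2n-k+1)-r,
\]
which is negative precisely when $r>2n-k+1$. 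Thus for such $r$ the effective class $[\tilde Y]$ is not a nonnegative combination of linear classes, and $\Effb_k(X_\Gamma^n)$ fails to be linearly generated. I expect the main obstacle to be geometric rather than combinatorial: it lies in pinning down the multiplicity of the cone along its vertex and in checking linearly general position, after which the separating functional settles the matter in one line.
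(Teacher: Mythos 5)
Your construction is exactly the one the paper uses in Example~\ref{ex-linearSharp} --- the cone over a rational normal curve of degree $n-k+1$ with vertex a $\P^{k-2}$, with $k-1$ of the points of $\Gamma$ general in the vertex and the remaining $r-k+1$ general on the cone --- and your class computation and conclusion agree with the paper's. The argument is correct; you have simply made explicit the details the paper leaves implicit (the multiplicity $n-k+1$ along the vertex, the verification of linearly general position, and the separation via the functional given by pairing with $(k+1)H_{n-k}-\sum_{i=1}^r E_{i,n-k}$).
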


In view of Proposition \ref{prop-lingenSharp}, it is natural to ask whether the bound in Theorem \ref{lingen} can be improved to $r \leq 2n-k+1$.
\begin{question}\label{ques-linear}
Assume that $\Gamma$ is a set of $r$ linearly general points in $\P^n$ such that $$\max\left(n+2, n + \frac{n}{k}\right) <  r \leq 2n-k+1.$$ Is $\Effb_k(X_{\Gamma}^n)$ linearly generated?
\end{question}
The answer is affirmative for curves and divisors. We will shortly check that for $2$-cycles in $\P^4$ the answer is also affirmative. In Theorem \ref{thm-verygenerallingen} we will see that that the answer is also affirmative if the points are very general. In view of this evidence, we expect the answer to Question \ref{ques-linear} to be affirmative.
\end{example}

\begin{remark}\label{remark-scroll}
The dimension of the space $\mathcal{S}_{n-k, k+1} (\P^n)$ of scrolls of dimension $n-k$ and degree $k+1$ in $\P^n$ is $$2n + 2nk - k^2 -2$$ \cite[Lemma 2.4]{CoskunGW}. There are scrolls in $\mathcal{S}_{n-k, k+1}(\P^n)$  passing through $2n-k+2$ points (see \cite{CoskunScrolls} for the surface case). Hence, the family of scrolls passing through $2n-k+1$ points covers $\P^n$. By Lemma \ref{maininequality}, an affirmative answer to Question \ref{ques-linear} is equivalent to the statement that every effective $k$-dimensional cycle intersects the proper transform of a scroll passing through the $2n-k+1$ points non-negatively. 
\end{remark}

\begin{question}\label{ques-scroll}
Let $\Gamma$ be $2n-k+1$ linearly general points in $\P^n$. For every effective $k$-cycle $Y$ in $X_{\Gamma}^n$, does there exist a scroll  $S$ of dimension $n-k$ and degree $k+1$ such that the proper transform $S$ in $X_{\Gamma}^n$ intersects $Y$ in finitely many points?
\end{question}
By Remark \ref{remark-scroll}, an affirmative answer to Question \ref{ques-scroll} implies an affirmative answer to Question \ref{ques-linear}. 

\subsection*{Effective $2$-cycles on the blow-up of $\P^4$ at $7$ points} We now verify that the answer to Question \ref{ques-linear} is affirmative for two-cycles in $\P^4$. The argument is subtle because we need to verify linear generation for {\em every} configuration of $7$ points in linear general position, rather than just very general configurations of points.

\begin{theorem}\label{thm-7inp4}
Let $\Gamma$ be $7$ linearly general points on $\P^4$. Then the cone $\Effb_2(X_{\Gamma}^4)$  is linearly generated.
\end{theorem}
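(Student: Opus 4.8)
The plan is to convert linear generation into a single numerical inequality and then certify it with a cubic scroll, as suggested by Remark~\ref{remark-scroll}. By Lemma~\ref{lem-ab} every irreducible effective $2$-cycle not contained in an exceptional divisor has class $aH_2 - \sum_{i=1}^7 b_i E_{i,2}$ with $a \geq b_i \geq 0$, and since any $3$ linearly general points span a plane, the linear generators of $\Effb_2(X_\Gamma^4)$ are the $E_{i,2}$ together with the planes $H_2 - \sum_{i\in I}E_{i,2}$, $\abs{I}\leq 3$. Applying Lemma~\ref{maininequality} (with $k+1=3$ in the role of its parameter $n$), the cone $\Effb_2(X_\Gamma^4)$ is linearly generated if and only if every such class satisfies $3a \geq \sum_{i=1}^7 b_i$. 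Thus the theorem reduces to proving this one inequality for every irreducible surface $Y \subset X_\Gamma^4$.

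First I would dispose of the degenerate case in which $Y$ lies in a hyperplane $L \cong \P^3$. Such an $L$ contains at most $4$ of the points, the multiplicities at the remaining points vanish, and $Y$ becomes an effective divisor on the blow-up of $\P^3$ at $\leq 4$ points. By Theorem~\ref{divisorcase} this cone is linearly generated, and the dual statement (a twisted cubic of class $3H_1 - \sum E_{i,1}$ through the $\leq 4$ points playing the role of the scroll) gives exactly $3a \geq \sum_{p_i\in L} b_i = \sum_{i=1}^7 b_i$.

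The heart of the argument is the nondegenerate case, where I would produce a cubic scroll realizing the bound. A rational normal cubic scroll $S \subset \P^4$ (dimension $2$, degree $3$) that is smooth at each $p_i$ has class $3H_2 - \sum_{i=1}^7 E_{i,2}$, and if $S$ meets $Y$ properly then $[S]\cdot[Y] = 3a - \sum b_i \geq 0$, as desired. The space of cubic scrolls has dimension $18$ by Remark~\ref{remark-scroll}, so the scrolls through the $7$ points form a family of dimension at least $18 - 2\cdot 7 = 4$; the associated incidence variety then has dimension $6$ and dominates $\P^4$, so these scrolls sweep out all of $\P^4$. Choosing a general point $x$ of $Y$ and a general scroll $S$ through $x$ and the $7$ points, one expects $S$ to be smooth at each $p_i$ (so $b_i=1$) and to meet $Y$ in dimension $0$, which would finish the proof.

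The main obstacle—and the source of the subtlety flagged in the statement—is that this genericity must be verified for \emph{every} linearly general configuration, not merely a very general one. Two things can go wrong for special configurations: the family of cubic scrolls through the $7$ points may consist entirely of cones (whose multiplicity $3$ at the vertex yields a strictly weaker bound), or the family may acquire a positive-dimensional base locus, a fixed curve which could also lie on $Y$ and destroy the properness of $S\cap Y$. I would therefore classify the special configurations for which the generic scroll argument fails, and in each such case fall back on hyperplane sections together with the line-multiplicity lemma (Lemma~\ref{lem-multiplicityline}): whenever $b_i + b_j > a$ the surface $Y$ contains the line $\ell_{ij}$, and splitting off these linear cycles reduces $Y$ to a residual cycle to which the hyperplane-section bound in the $\P^3$ case and induction on the number of points apply. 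Carrying out this finite case analysis uniformly over all linearly general $7$-point configurations is the technical crux; everything else is the dimension count and the intersection computation above.
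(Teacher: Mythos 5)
Your reduction to the single inequality $3a \geq \sum_{i=1}^7 b_i$ via Lemma~\ref{maininequality} is correct and matches the paper, and cubic scrolls through the seven points are indeed the right certifying objects (this is exactly Question~\ref{ques-scroll}). But the proposal stops where the actual work begins: you never prove that some scroll through $\Gamma$ meets a given surface $Y$ properly. You write that ``one expects'' a general scroll through a general point of $Y$ to do so, and defer the verification to an unexecuted ``finite case analysis.'' That verification is the entire content of the theorem.

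Moreover, the difficulty is misdiagnosed. It is not that the scroll family degenerates for special $7$-point configurations: for \emph{every} linearly general $\Gamma$ there is a unique rational normal quartic $R$ through $\Gamma$, its secant variety $\Sec(R)$ is a cubic threefold double along $R$ with proper transform of class $3H - \sum_{i=1}^7 2E_i$, and surfaces $Y \subset \overline{\Sec}(R)$ exist for every configuration. These are precisely the surfaces for which a naive ``general scroll through a general point of $Y$'' argument is in danger of producing only improper intersections. The paper's proof splits accordingly. If $Y \not\subset \overline{\Sec}(R)$, then intersecting with the nef quadric class and with $\overline{\Sec}(R)$ (Lemma~\ref{lem-intersectnef}) gives $6a - 2\sum_{i=1}^7 b_i \geq 0$ directly, with no scroll needed. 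If $Y \subset \overline{\Sec}(R)$, the paper proves (Theorem~\ref{thm-scroll}, via Claims~\ref{claim-irreducible}--\ref{claim-irreduciblecurve}) that a general scroll $\overline{S}$ through $\Gamma$ cuts $\overline{\Sec}(R)$ in an \emph{irreducible} degree-$9$ curve $B$ which can be arranged to pass through a general point $p \in \overline{\Sec}(R)$ not on $Y$; then $\overline{S} \cap Y \subseteq B$ and $B \not\subset Y$, so the intersection is finite. Neither the $\Sec(R)$ dichotomy nor the irreducibility of $B$ appears in your proposal, and your fallback via Lemma~\ref{lem-multiplicityline} cannot substitute for them: a class violating $3a \geq \sum_{i=1}^7 b_i$ need not satisfy $b_i + b_j > a$ for any pair, so no line is forced onto $Y$ and the reduction you describe does not get started.
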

\begin{proof}
There is a unique rational normal quartic curve $R$ containing $7$ linearly general points in $\P^4$ \cite{Harris}. The secant variety $\Sec(R)$ to $R$ is a cubic hypersurface which is double along $R$. Hence, its proper transform $\overline{\Sec}(R)$ in $X_{\Gamma}^4$ has class $3H - \sum_{i=1}^7 2 E_i$. 

Let $Y$ be an irreducible surface in $X_{\Gamma}^4$. Without loss of generality, we may assume that $Y$ is not contained in an exceptional divisor and has class $aH_2 - \sum_{i=1}^7 b_i E_{i,2}$ with $a \geq b_i \geq 0$.
First, suppose that $Y$ is not contained in $\overline{\Sec}(R)$. The class of a quadric \([Q] = H - \sum_{i=1}^7 E_{i}\) is nef, and so by Lemma~\ref{lem-intersectnef} we have $$Y \cdot Q \cdot \overline{\Sec}(R) = 6a - \sum_{i=1}^7 2b_i \geq 0.$$   Lemma \ref{maininequality} implies that $[Y]$ is in the span of the classes of planes.

We are reduced to showing that if $Y \subset \overline{\Sec}(R)$, then $[Y]$ is in the span of the classes of planes. Let $\mathcal{S}_3$ denote the space of cubic surface scrolls containing the points of $\Gamma$.  We will show the following.
\begin{theorem}\label{thm-scroll}
The proper transform $\overline{S}$ of a general member $S \in \mathcal{S}_3$ intersects $\overline{\Sec}(R)$ in an irreducible curve $B$ whose projection to $\Sec(R)$ is a degree $9$ curve double at the points of $\Gamma$. Furthermore, the curve $B$ can be made to pass through a general point of $\overline{\Sec}(R)$.
\end{theorem} 

Assume Theorem \ref{thm-scroll}.  Let $p \in \overline{\Sec}(R)$ be a general point not contained in $Y$. Hence, an irreducible curve $B$ passing through $p$ intersects $Y$ in finitely many points. Let $\overline{S}$ be the proper transform of a scroll $S \in \mathcal{S}_3$ containing $p$ and intersecting $\overline{\Sec}(R)$ in an irreducible curve. We conclude that $\overline{S}$ and $Y$ intersect in finitely many points, hence their intersection number is positive. Therefore, $$[\overline{S}] \cdot [Y] = (3 H_2 - \sum_{i=1}^7 E_{i,2}) \cdot (aH_2 - \sum_{i=1}^7 b_i E_{i,2}) = 3a - \sum_{i=1}^7 b_i \geq 0.$$ By Lemma \ref{maininequality}, we conclude that $[Y]$ is in the span of the classes of planes.

There remains to prove Theorem \ref{thm-scroll}, which we will do via a series of claims. We first set some notation.
\begin{notation}
Let $l_{i,j}$ denote the line spanned by $p_i, p_j \in \Gamma$ and let $\Pi_{i_1, \dots, i_l}$ denote the linear space spanned by $p_{i_1},\dots, p_{i_l} \in \Gamma$. Let $\Gamma_{i_1, \dots, i_l}$ denote the set of points $p_{i_1}, \dots, p_{i_l}$. Let $l$ be  the line of intersection $\Pi_{1,2,3,4} \cap \Pi_{5,6,7}$ and, for $5 \leq i < j \leq 7$, let $z_{i,j}$ denote the point of intersection $\Pi_{1,2,3,4} \cap l_{i,j}$. Since the points are in linearly general position, the line $l$  does not intersect the lines $l_{i,j}$ for $1 \leq i <j \leq 4$ and intersects the planes $\Pi_{i,j,k}$, for $1\leq i<j<k \leq 4$, in a unique point different from $z_{i,j}$. 
\end{notation}
 
Next, we recall a compactification $\overline{\mathcal{S}}_3$ of $\mathcal{S}_3$. Every irreducible cubic scroll induces a degree $3$ rational curve in the Grassmannian $\mathbb{G}(1,4)$ of lines in $\P^4$. We can compactify the space of degree $3$ rational curves in $\mathbb{G}(1,4)$ via the Kontsevich moduli space. Hence, we can take the closure of $\mathcal{S}_3$ in the Kontsevich moduli space (see \cite[\S 3]{CoskunScrolls} for details). More precisely, let $\overline{\mathcal{M}}_{0,7}(\mathbb{G}(1,4),3)$ denote the Kontsevich moduli space of $7$-pointed genus-$0$ maps of degree $3$ to $\mathbb{G}(1,4)$.  It is equipped with $7$ evaluation morphisms $\ev_i: \overline{\mathcal{M}}_{0,7}(\mathbb{G}(1,4),3) \rightarrow \mathbb{G}(1,4)$, $1 \leq i \leq 7$. Define $$\overline{\mathcal{S}}_3 = \bigcap_{i=1}^7 \ev_i^{-1} (\Sigma_3(p_i)),$$ where $\Sigma_3(p_i)$ denotes the Schubert variety of lines containing $p_i$.

\begin{claim}\label{claim-irreducible}
The space $\overline{\mathcal{S}}_3$ is irreducible of dimension $4$.
\end{claim}
\begin{proof}
The locus $\overline{\mathcal{T}}= \cap_{i=1}^4 \ev_i^{-1}(p_i)$ in the Kontsevich moduli space $\overline{\mathcal{M}}_{0,4}(\P^3, 3)$ of $4$-pointed genus $0$ maps of degree $3$ to $\P^3$ provides a compactification of the space of twisted cubic curves in $\Pi_{1,2,3,4}$ containing $\Gamma_{1,2,3,4}$. Since $\overline{\mathcal{M}}_{0,4}(\P^3, 3)$ is irreducible of dimension $16$, every component of $\overline{\mathcal{T}}$ has dimension at least $4$. 

If a twisted cubic $T$ is irreducible, then any finite set of points on $T$ is linearly general. Furthermore, given $6$ linearly general points in $\P^3$, there is a unique twisted cubic curve containing them. Consider the incidence correspondence $I= \{(T, q_1, q_2)| q_1, q_2 \in T\}$, where $T$ is a twisted cubic curve containing the set of points $\Gamma_{1,2,3,4}$ and $q_1, q_2$ are points such that $\Gamma_{1,2,3,4} \cup \{ q_1, q_2\}$ are in linearly general position. The incidence correspondence $I$ is irreducible of dimension $6$ since it is isomorphic to an open  subset of $\P^3 \times \P^3$. It dominates the space of twisted cubic curves containing $p_1, \dots, p_4$ via the first projection. Since the fibers of the first projection are two-dimensional, we conclude that the space of irreducible twisted cubics containing $\Gamma_{1,2,3,4}$ is irreducible of dimension $4$. 

Since there are no connected curves of degree two or one containing $4$ points in linearly general position in $\P^3$,  any map in $\overline{\mathcal{T}}$ is birational to its image. If there is a reducible curve of degree $3$ containing $\Gamma_{1,2,3,4}$, either a degree two curve must contain $3$ of the points or a line must contain two of the points. In either case, it is easy to see that there is a $3$-dimensional family of reducible cubics containing $\Gamma_{1,2,3,4}$. Hence, these cannot form a component of $\overline{\mathcal{T}}$ and $\overline{\mathcal{T}}$ is irreducible. 

Furthermore, $2$ additional points $q_1, q_2$ impose independent conditions on twisted cubics unless they are coplanar with three of the points in $\Gamma_{1,2,3,4}$  or one of the points is collinear with two of the points in $\Gamma_{1,2,3,4}$. If $q_1$ is collinear with $p_1, p_2$, then there is a $1$-parameter family of reducible cubics containing the line $l_{1,2}$. Similarly, if $q_1$ and $q_2$ are in $\Pi_{1,2,3}$ but no $4$ of the points are collinear, then there is a $1$-parameter family of reducible cubics containing the conic through $\Gamma_{1,2,3} \cup \{ q_1, q_2\}$. If $q_1, q_2$ are collinear with $p_1$ and $p_2$, there is a three-parameter family of reducible cubics containing $l_{1,2}$.  Recall that  $l= \Pi_{1,2,3,4} \cap \Pi_{5,6,7}$.  In particular, the subset of $\overline{\mathcal{T}}$ that parameterizes twisted cubics  incident (respectively, secant) to $l$ has dimension $3$ (respectively, $2$) since any pair of distinct points impose independent conditions on twisted cubics. Similarly,  the locus of twisted cubics in $\overline{\mathcal{T}}$ passing through $z_{5,6}$ has dimension $2$.

Since $\overline{\mathcal{M}}_{0,7}(\mathbb{G}(1,4),3)$ is irreducible of dimension $25$ \cite[\S 2]{CoskunScrolls}, every irreducible component of $\overline{\mathcal{S}}$ has dimension at least $4$. Let $T$ be a twisted cubic curve containing $\Gamma_{1,2,3,4}$ and not secant to the line $l$ and not containing the points $z_{5,6}$, $z_{5,7}$ and $z_{6,7}$. Then there is a unique cubic scroll $S$ containing  $T$ and passing through $p_5, p_6, p_7$ \cite[Example A1]{CoskunScrolls}. Briefly, take a general $\P^3$ containing $\Pi_{5,6,7}$. This $\P^3$ intersects $T$ in $3$ points $r_1, r_2, r_3$. There is a unique twisted cubic curve $T'$ containing $r_1, r_2, r_3$ and $\Gamma_{5,6,7}$. The curves $T$ and $T'$ are both isomorphic to $\P^1$ and there is a unique isomorphism $\phi$ taking $r_i \in T$ to $r_i \in T'$. Then the surface $S_{T, T'}$ swept out by lines joining the points that correspond under $\phi$ is the unique cubic scroll containing $T$ and $\Gamma_{5,6,7}$. If $T$ contains the point $z_{5,6}$ or is secant to the line $l$, then there is a $1$-parameter family of choices for $T'$. Once we fix $T$ and $T'$, the scroll is uniquely determined by a similar construction. Since the locus of $T$ containing $z_{5,6}$ or secant to $l$ has codimension $2$, this locus cannot form a component of $\overline{\mathcal{S}}_3$. Finally, reducible cubic surfaces containing $\Gamma$ must contain a plane through $3$ of the points and a quadric surface through the remaining $4$ points. There is a $2$-dimensional family of such surfaces and they do not give rise to a component in $\overline{\mathcal{S}}_3$ (see \cite{CoskunScrolls}). We conclude that $\overline{\mathcal{S}}_3$ is irreducible of dimension $4$.
\end{proof}

\begin{claim}\label{claim-notcontained}
There exists a dense open set $U \subset \overline{\mathcal{S}}_3$ such that $S \not\subset \Sec(R)$ for   $S \in U$. Furthermore, we may assume that $S$ contains a general point of $\Sec(R)$.
\end{claim}
\begin{proof}
It suffices to exhibit one $S \in \overline{\mathcal{S}}_3$ such that $S \not\subset \Sec(R)$. Given $7$ points in general linear position and $2$ general additional points, \cite[Example A1]{CoskunScrolls} shows that there are $2$ cubic scrolls containing these nine points. In particular, if we take one of the two additional points outside $\Sec(R)$, we obtain a scroll not contained in $\Sec(R)$. Furthermore, a general twisted cubic in $\Pi_{1,2,3,4}$ containing $\Gamma_{1,2,3,4}$ intersects $\Sec(R)$ in a another point $q$. Consequently, the construction in the proof of Claim \ref{claim-irreducible} exhibits a cubic scroll containing $q$ and not contained in $\Sec(R)$. Since the space $\overline{\mathcal{S}}_3$ is irreducible, the general scroll containing a general point of $\Sec(R)$ and $\Gamma$ will not be contained in $\Sec(R)$.
\end{proof}

\begin{claim}\label{claim-curves}
There exists a dense open set  $U \subset \overline{\mathcal{S}}_3$ such that for $S \in U$ the following hold:
\begin{enumerate}
\item The intersection $\overline{S} \cap \overline{\Sec}(R) \cap E_i$ is a finite set of points in $X_{\Gamma}^4$ for every $1 \leq i \leq 7$.
\item The scroll $S$ does not contain any lines $l_{i,j}$ for $1 \leq i < j \leq 7$.
\item The scroll $S$ does not contain any conics through $3$ of the points in $\Gamma$.
\item The scroll $S$ does not contain a twisted cubic curve through $5$ of the points of $\Gamma$.
\item The scroll $S$ does not contain the rational normal quartic $R$.
\item The scroll $S$ does not contain a quintic curve double at one of the points of $\Gamma$ and passing through the others.
\item The directrix of the scroll does not contain any of the points in $\Gamma$.
\end{enumerate}
\end{claim}
\begin{proof}
Since each of these conditions are closed conditions and $\overline{\mathcal{S}}_3$ is irreducible, it suffices to exhibit one element $S \in \overline{\mathcal{S}}_3$ satisfying each condition. For (1), there exists a twisted cubic containing $\Gamma_{1,2,3,4}$ with any tangent line at $p_1$ (for example, the reducible twisted cubic consisting of any line through $p_1$ and a conic through $\Gamma_{2,3,4}$). Hence, the tangent spaces to the scrolls at $p_1$ sweep out $E_1$ and there exists $S$ such that $\overline{S} \cap E_1\not\subset \overline{\Sec}(R)$. By permuting indices, we conclude (1). 

For (2) and (3),  take the scroll $S_{T, T'}$ constructed in the proof of Claim \ref{claim-irreducible}. Since  $\Pi_{1,2,3,4} \cap S_{T, T'} = T$, this scroll does not contain any of the linear $l_{i,j}$ with $1 \leq i < j \leq 4$ or any conic passing through any of the three points in $\Gamma_{1,2,3,4}$. By permuting indices, we conclude (2) and (3). 

Since a twisted cubic curve spans a $\P^3$ and the points are in linearly general position (4) is clear. For (5), (6) and (7), it is more convenient to exhibit a reducible scroll satisfying these properties. Let $S$ be the union of the plane $\Pi_{5,6,7}$ and a general quadric surface $Q$ containing $l$ and $\Gamma_{1,2,3,4}$. After choosing a point of $l$, this surface determines a point $p$ of $\overline{\mathcal{S}}_3$ \cite{CoskunScrolls}. The directrix line is then the unique line on the quadric $Q$ intersecting $l$ at $p$. Hence, (7) holds. Since $R$ is irreducible and nondegenerate, it cannot be contained in this surface. Suppose there is a quintic curve $F$ in $S$ containing $\Gamma$ and double at $p_1$. Since $p_5, p_6, p_7$ are not collinear, $F$ must intersect $\Pi_{5,6,7}$ in a curve of degree at least $2$. Hence, $F$ intersects $Q$ in a curve of degree $3$ containing $\Gamma_{2,3,4}$ and double at $p_1$. Any cubic double at $p_1$ must contain the line of ruling through $p_1$. Since $\Gamma_{1,2,3,4}$ are linearly general there cannot be a degree 2 curve through these points on $Q$. After permuting indices, we conclude (6) holds.
\end{proof}

\begin{claim}\label{claim-irreduciblecurve}
There exists a dense open set $U \subset \overline{\mathcal{S}}_3$ such that for $S \in U$ the intersection $S \cap \Sec(R)$ is an irreducible degree $9$ curve double along $\Gamma$. 
\end{claim}
\begin{proof}
By Claim \ref{claim-notcontained} and Claim \ref{claim-curves}, we can find a scroll $S \not\subset \Sec(R)$ and satisfying the conclusions of Claim \ref{claim-curves}. The intersection $S \cap \Sec(R)$ is a curve $B$ of degree $9$  double along $\Gamma$. We need to show that $B$ is irreducible. Recall that a smooth cubic scroll is isomorphic to the blow-up of $\P^2$ at a point. Its Picard group is generated by the directrix $e$  (the curve of self-intersection $-1$) and the class of a fiber line $f$. The intersection numbers are $$e^2=-1, \quad e\cdot f=1,  \quad f^2=0.$$ The effective cone is spanned by $e$ and $f$. The canonical class is $-2e-3f$ and the class of $B$ is $3e+6f$. The degree of a curve $ke+mf$ is $k+m$. If $k>m$, then any representative contains $e$ with multiplicity $k-m$. By adjunction, the arithmetic genus of a curve in the classes $e+ mf$, $2e+mf$ and $3e+mf$ are $0$, $m-2$ and $2m-5$, respectively. 

It is now straightforward, but somewhat tedious to check that $B$ cannot be reducible. Suppose $B$ is reducible. Unfortunately, $B$ can have many components. Write $B= B_1 \cup B_2$, where the class of $B_1$ is $ke + mf$ with $2 \leq k \leq 3$ and assume that $B_1$ does not contain any fibers as components. Furthermore, if $k=3$, we may assume that $B_1$ is irreducible. Otherwise, we can regroup a component with class $e+m'f$ with $B_2$.  Then the class of $B_2$ is $(3-k)e + (6-m)f$ and every fiber component of $B$ is included in $B_2$. By Claim \ref{claim-curves} (2), a curve with class $mf$ can be double at most in $0 \leq d \leq \frac{m}{2}$ points of $\Gamma$ in which case it can contain at most $m-2d$ of the remaining points of $\Gamma$. We tabulate the possibilities for curves with class $e+mf$.
\bigskip

\begin{tabular}{| l | l | l | l |} 
\hline 
class & $\#$ double points of $\Gamma$ & $\#$ remaining points of $\Gamma$ resp. & Reason \\
\hline 
$e$ & $0$ & $0$ & Claim \ref{claim-curves} (7)  \\
\hline
$e+f$ & $0$ & $2$ & Claim \ref{claim-curves} (3,2) \\
\hline
$e+2f$ & $0$ or $1$ & $4$ or $1$ & Claim \ref{claim-curves} (4,3,2) \\
\hline
$e+3f$ & 0, 1 or 2& 6, 3 or 0 & Claim \ref{claim-curves} (7,5,4,3,2) \\
\hline
$e+4f$ & 0, 1 or 2 & 7, 5 or  2 & Claim \ref{claim-curves} (7,5,4,3,2) \\
\hline
\end{tabular}
\bigskip

First, suppose $B_1$ has class $3e+mf$. By assumption, it is irreducible and by arithmetic genus considerations can have at most $2m-5$ nodes. On the other hand, $B_2$ can pass through at most $6-m$ of the points. We have $2m-5 + 6 -m = m+1 < 7$ if $m < 6$. Hence, such a curve cannot be double at all the points of $\Gamma$.

We may therefore assume that the class of $B_1$ is $2e + mf$ and the class of $B_2$ is $e + (6-m)f$. If $B_1$ is reducible, then it can have at most $2$ components with classes $e+m_1f$ and $e+m_2 f$. An inspection of the above table shows that it is not possible to make $B$ double at all points of $\Gamma$. If $B_1$ is irreducible, then $m\geq 2$ and its arithmetic genus is $m-2$. Hence, the maximal number of double points on $B_1$ is $m-2$. If $m=2$, $B_1$ can contain at most $6$ of the points of $\Gamma$ by Claim \ref{claim-curves} (5) and it is smooth at those points. Hence, $B$ cannot be made double at all points of $\Gamma$ by the last line of the table. If $m=3$ and $B_1$ has a double point, then by Claim \ref{claim-curves} (6) $B_1$ contains at most $5$ other points of $\Gamma$. By the second to last row of the table, $B$ cannot be double at all points of $\Gamma$. If $m \geq 4$, an easy inspection of the first three rows of the table show that $B$ can have at most $6$ double points. We conclude that $B$ is irreducible.
\end{proof}
This concludes the proof of Theorem \ref{thm-scroll} and consequently of Theorem \ref{thm-7inp4}.

\end{proof}

\section{Non-linearly generated cones}\label{sec-nonlingen}
Recall that $X_r^n$ denotes the blow-up of $\P^n$ in $r$ very general points. In this section, we study the cones of effective cycles on $X_{r}^n$. Our first result completely characterizes when the cone of curves is linearly generated.

\begin{proposition}\label{prop-verygeneralCurve}
The cone $\Effb_1(X_r^n)$ is linearly generated if and only if $r \leq 2^n$.
\end{proposition}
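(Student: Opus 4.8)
The plan is to reduce the statement to a single nefness question and then dispatch the two directions by a top--self--intersection computation and a specialization argument. Throughout write $D = 2H - \sum_{i=1}^r E_i$. Taking $k=1$ in the reformulation recorded just after Lemma~\ref{maininequality}, the class controlling linear generation is $(k+1)H_{n-k}-\sum_i E_{i,n-k}= 2H-\sum_i E_i = D$, so $\Effb_1(X_r^n)$ is linearly generated if and only if $D$ is nef. (By Lemma~\ref{lem-ab} every irreducible curve has class $aH_1-\sum b_iE_{i,1}$ with $a\geq b_i\geq 0$, or lies in an $E_i$; applying Lemma~\ref{maininequality} with parameter $2$ shows this equivalence holds for any configuration, not just the very general one.) Using $H\cdot E_i=0$, $E_iE_j=0$ for $i\neq j$, $H^n=1$ and $E_i^n=(-1)^{n-1}$, I would first record the one computation that drives everything:
\[
D^n = (2H)^n + \sum_{i=1}^r(-E_i)^n = 2^n + (-1)^n\sum_{i=1}^r E_i^n = 2^n - r.
\]

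For the ``only if'' direction I would argue purely numerically. If $r > 2^n$ then $D^n = 2^n - r < 0$. Since the top self-intersection of a nef divisor on an $n$-fold is non-negative, $D$ cannot be nef, and hence $\Effb_1(X_r^n)$ is not linearly generated. Note that this needs nothing about the position of the points: $D^n=2^n-r$ is a cohomological identity valid for every configuration, so $D$ fails to be nef on \emph{any} blow-up once $r>2^n$.

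For the ``if'' direction, suppose $r\leq 2^n$. By Corollary~\ref{cor-semicont} we have $\Effb_1(X_r^n)\subseteq \Effb_1(X_{\Gamma_0}^n)$ for any configuration $\Gamma_0$ of $r$ distinct points, so it suffices to produce one configuration on which $D$ is nef; nefness of $D$ on $X_{\Gamma_0}^n$ then forces nefness of $D$ on $X_r^n$ and hence linear generation. The natural choice is to take $n$ general quadrics $Q_1,\dots,Q_n$, whose complete intersection is a set of $2^n$ reduced points, and let $\Gamma_0$ be any $r$ of them. Each proper transform $\overline{Q_j}$ meets $\Gamma_0$ with multiplicity one, so $[\overline{Q_j}]=D$; thus $D$ is represented by $n$ distinct effective divisors. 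Given an irreducible curve $C\subset X_{\Gamma_0}^n$: if $C\subset E_i$ then $D\cdot C>0$ because $D\cdot E_{i,1}=1$; otherwise $\pi(C)$ is a curve in $\P^n$, and since $\bigcap_j Q_j$ is finite we must have $C\not\subset \overline{Q_j}$ for at least one $j$, whence $D\cdot C=\overline{Q_j}\cdot C\geq 0$ as the intersection of $C$ with an effective divisor not containing it. Therefore $D$ is nef on $X_{\Gamma_0}^n$, completing the argument.

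The crux is the ``if'' direction, and the explanation for the exponent $2^n$ lives entirely there: $2^n$ is precisely the largest number of points cut out as a \emph{finite} complete intersection of quadrics, which is exactly what allows $D$ to be written as $n$ effective divisors sharing no common curve. The ``only if'' direction, by contrast, turns out to be essentially free once one computes $D^n$; I would expect the main risk in writing this up to be purely expository, namely making clear that the equivalence ``linearly generated $\iff D$ nef'' and the inclusion of Corollary~\ref{cor-semicont} are being used for the special configuration $\Gamma_0$ and not only for very general points.
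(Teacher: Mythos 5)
Your proposal is correct and follows essentially the same route as the paper: the same top self-intersection computation $D^n = 2^n - r$ combined with Kleiman's criterion for the ``only if'' direction, and the same complete intersection of $n$ general quadrics to establish nefness of $D = 2H - \sum_{i=1}^r E_i$ on a special configuration for the ``if'' direction. The only cosmetic difference is that you pass from the special configuration to very general points via Corollary~\ref{cor-semicont}, where the paper instead cites the standard fact that nefness holds for very general members of a family.
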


\begin{proof}
We first observe  that the linear system of quadrics through $2^n$ very general points is nef.    Choose $n$ general quadrics $Q_1, \dots, Q_n$ in $\P^n$.  By Bertini's theorem, the intersection of these quadrics is a set of $2^n$ points in $\P^n$.  Let $X_0$ be the blow-up of $\P^n$ at these points.  We claim that $D= 2H - \sum_{i=1}^{2^n} E_i$ is nef on $X_0$. Note that the proper transforms of $Q_i$ have class $D$ and $D$ has positive degree on curves contained in exceptional divisors $E_i$. Since the intersection $Q_1 \cap \cdots \cap Q_n$ is finite, if $B$ is a curve on $X_0$ not contained in an exceptional divisor, there is a quadric $Q_i$ whose proper transform does not contain $B$. Consequently,  $D \cdot B \geq 0$ and $D$ is nef.
 By \cite[Prop. 1.4.14]{Lazarsfeld}, \(2H - \sum_{i=1}^{2^n} E_i\) is nef for very general configurations of \(2^n\) points as well. We conclude that if $r \leq 2^n$, an effective curve class in $X_{r}^n$ satisfies the inequalities in the assumptions of Lemma~\ref{maininequality}, and so every curve class is a linear combination of classes of lines.

The top self-intersection of the class $Q= 2H -\sum_{i=1}^{r} E_i$  on $X_{r}^n$ is given by $2^n - r$. Hence, if  $r> 2^n$, then the top self-intersection of $Q$ is negative and $Q$ cannot be nef by Kleiman's Theorem \cite[Theorem 1.4.9]{Lazarsfeld}.  Suppose the class of every effective curve is in the span of the classes of lines. The cone generated by the classes of lines is a closed cone. Hence, the effective and the pseudoeffective cones coincide. Since every line has nonnegative intersection with $Q$, we conclude that $Q$ is nef. This contradiction shows that there must exist effective curves whose classes are not spanned by the classes of lines.
\end{proof}

We obtain the following consequence.

\begin{corollary}\label{cor-linear}
If $r \geq 2^{n-k+1} +k$, then $\Effb_k(X_r^n)$ is not linearly generated.
\end{corollary}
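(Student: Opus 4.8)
The plan is to deduce Corollary~\ref{cor-linear} from Proposition~\ref{prop-verygeneralCurve} by the cone construction described in the ``Cones'' subsection, iterated an appropriate number of times. The key observation is that linear generation fails as soon as we can produce a single effective $k$-cycle whose class is not a nonnegative combination of the linear classes, and that non-linearly generated curve classes can be ``lifted'' to higher-dimensional cycles by repeatedly coning.

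First I would recall that by Proposition~\ref{prop-verygeneralCurve}, on $X_{2^n+1}^{n}$ there exists an effective curve $B$ whose class $aH_1 - \sum_{i=1}^{2^n+1} b_i E_{i,1}$ violates inequality~(3) of Lemma~\ref{maininequality}, i.e.\ $na < \sum_i b_i$; indeed, the proof of that proposition shows that the quadric class $2H - \sum E_i$ fails to be nef precisely because some effective curve pairs negatively with it. Next I would apply the cone map $C : N_k(X^n_r) \to N_{k+1}(X^{n+1}_{r+1})$, under which $C(H_k) = H_{k+1} - E_{0,k+1}$ and $C(E_{i,k}) = E_{i,k+1}$, and which sends effective classes to effective classes. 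Iterating the cone construction $m := k-1$ times starting from a non-linearly generated curve class on $X^{n-k+1}_{2^{\,n-k+1}+1}$ produces an effective $k$-cycle on $X^{n}_{\,2^{\,n-k+1}+1 + (k-1)} = X^{n}_{\,2^{\,n-k+1}+k}$. The point is that each coning step raises the dimension of the ambient space by one, the dimension of the cycle by one, and the number of points by one (the new vertex), so after $k-1$ steps we land exactly in the range $r = 2^{\,n-k+1} + k$.

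The heart of the argument is to check that coning preserves the failure of linear generation, i.e.\ that if $[V]$ is not a nonnegative combination of linear classes, then neither is $C([V])$. Here I would use the characterization from Lemma~\ref{maininequality}: a class $aH_j - \sum_i b_i E_{i,j}$ is linearly generated exactly when it satisfies $a \geq b_i \geq 0$ and $(n-j)a \geq \sum_i b_i$ (the slope inequality~(3), with $n$ the ambient dimension). Coning sends the curve class $aH_1 - \sum b_i E_{i,1}$ on $X^{n-k+1}_r$ to $aH_k - aE_{0,k} - \sum b_i E_{i,k}$ on $X^{n}_{r+k-1}$, and one checks that the relevant slope inequality at each stage becomes $(n-j)a \geq a + \sum b_i$ where the ambient dimension and cycle dimension both grow by one at each step, so the \emph{deficit} $\sum b_i - (\text{ambient codim})\cdot a$ is preserved. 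Thus the original violation $\sum b_i > (n-k+1)a$ propagates to a violation of the top-dimensional inequality for the $k$-cycle, and $C([V])$ is genuinely non-linearly generated.

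The main obstacle I anticipate is bookkeeping the inequalities cleanly through the iteration, and in particular verifying that the coned class actually lies outside the linear cone rather than merely failing one sufficient condition. Since Lemma~\ref{maininequality} gives an \emph{if} statement (the listed inequalities suffice for linear generation) rather than an \emph{iff}, to conclude non-membership I must exhibit an explicit nef dual class separating $C([V])$ from the linear cone. The natural candidate is the coned-up quadric class, whose non-nefness transfers from $X^{n-k+1}_{2^{\,n-k+1}+1}$, together with the duality statement recorded after Lemma~\ref{maininequality} that $\Effb_k(X^n_r)$ is linearly generated if and only if $(k+1)H_{n-k} - \sum_{i=1}^r E_{i,n-k}$ is nef. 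I would therefore phrase the final step as: coning an effective curve witnessing non-nefness of $2H - \sum E_i$ yields an effective $k$-cycle witnessing non-nefness of $(k+1)H_{n-k} - \sum E_{i,n-k}$, which by that duality is exactly the failure of linear generation of $\Effb_k(X^n_r)$.
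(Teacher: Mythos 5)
Your proposal is correct and is essentially the paper's own argument: the paper takes the cone over a non-linearly-generated curve supplied by Proposition~\ref{prop-verygeneralCurve} with vertex the $\P^{k-2}$ spanned by $k-1$ of the points --- which is exactly your $(k-1)$-fold iterated cone construction --- and then verifies the same separating inequality, namely that the resulting class has $(k+1)a < (k-1)a + \sum b_i$ while every linear $k$-cycle class pairs nonnegatively with $(k+1)H_{n-k} - \sum_i E_{i,n-k}$. The only cosmetic difference is that the paper handles arbitrary $r \geq 2^{n-k+1}+k$ by projecting from $k-1$ of the $r$ points to get $r-k+1 > 2^{n-k+1}$ points in $\P^{n-k+1}$, whereas you start from the boundary case $r = 2^{n-k+1}+1$ on the curve side; your separating-functional argument extends to larger $r$ verbatim.
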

\begin{proof}
Let $\Gamma$ be a set of $r$ very general points. Project $\Gamma$ from the first $k-1$ points $p_1, \dots, p_{k-1}$ and let $\Gamma'$ be the set of points in $\P^{n-k+1}$ consisting of the images of the remaining points. Then  $\Gamma'$ is a set of $r-k+1$ very general points in $\P^{n-k+1}$.  If $r-k+1> 2^{n-k+1}$, the cone $\Effb_1(X_{r-k+1}^{n-k+1})$ is not linearly generated by Proposition \ref{prop-verygeneralCurve}.  Fix a $1$-cycle $B$ with class $aH_1 - \sum_{i=k}^r b_i E_{i,1}$ that is not in the span of linear spaces. In particular, $2a < \sum_{i=k}^r b_i$. Then the class  $$aH_k - \sum_{i=1}^{k-1} a E_{i,k} - \sum_{i=k}^r b_i E_{i,k}$$ is represented  in $X_r^n$ by the proper transform of the cone over $B$ with vertex the span of $p_1, \dots, p_{k-1}$.  The resulting $k$-cycle is not in the span of $k$-dimensional linear spaces since $(k+1) a < (k-1) a + \sum_{i=k}^r b_i$.
\end{proof}

\begin{question}\label{ques-verygeneral}
If $r < 2^{n-k+1} + k$, is $\Effb_k(X_{r}^n)$ linearly generated?
\end{question}

\begin{remark}
The answer to Question \ref{ques-verygeneral} is affirmative for curves and divisors. For  cycles of intermediate dimension, we do not know any examples with $r= 2^{n-k+1} + k -1$ where the cone is linearly generated. 

There has been a great deal of interest in the construction of cycles that are nef but not pseudoeffective.  Such cycles were constructed on abelian varieties in \cite{DELV}, and on hyperk\"ahler varieties on \cite{Ottem2}.  If Question~\ref{ques-verygeneral} has an affirmative answer, this would give many examples of nef classes that are not pseudoeffective. For example, if $\Effb_3(X_r^6)$ is linearly generated for $16< r < 19$, then the class $4H_3 - \sum_{i=1}^r E_{i,3}$ would be nef but not pseudoeffective; indeed, the self-intersection of this class is negative.
\end{remark}

We can, however, give a linear bound.

\begin{theorem}\label{thm-verygenerallingen}
The cone $\Effb_k(X_r^n)$ is linearly generated if $r \leq 2n-k+1$.
\end{theorem}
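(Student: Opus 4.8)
The plan is to reduce the statement to the nef-ness of a single cycle class and then to verify that nef-ness by producing a covering family of minimal-degree scrolls through the points. By the observation following Lemma~\ref{maininequality}, the cone $\Effb_k(X_r^n)$ is linearly generated if and only if the class
\[
N := (k+1)H_{n-k} - \sum_{i=1}^r E_{i,n-k}
\]
is nef, and as noted in Remark~\ref{remark-scroll} this $N$ is exactly the class of the proper transform of a scroll of dimension $n-k$ and degree $k+1$ through the $r$ points. First I would reduce to the extremal case $r = 2n-k+1$: by the monotonicity used in the proof of Lemma~\ref{toriccase} (the proper transform of an effective cycle under a further blow-up is effective), linear generation for $r$ points implies it for any smaller number of points.

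Next I would set up the covering family. The space $\mathcal{S}_{n-k,k+1}(\P^n)$ of such scrolls has dimension $2n+2nk-k^2-2$ by Remark~\ref{remark-scroll} and \cite{CoskunGW}. A scroll has dimension $n-k$, so passing through a general point imposes $k$ conditions; imposing passage through the $r = 2n-k+1$ very general points cuts the dimension down by $k(2n-k+1)$ to $2n-k-2$, which is at least $1$ in the relevant range $1 \leq k \leq n-1$ (indeed $2n-k \geq n+1 \geq 3$ there). Since there is still a scroll through one further general point (Remark~\ref{remark-scroll}, \cite{CoskunScrolls}), the scrolls through the $2n-k+1$ points sweep out all of $\P^n$. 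Thus I expect an irreducible, positive-dimensional family $\{S_t\}$ of scrolls through the points whose union is $\P^n$; if checking irreducibility of this family is more convenient after placing the points in a special position, Corollary~\ref{cor-semicont} allows me to do so, since the nef cone of a special configuration is contained in that of the very general one.

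Finally I would run the nef argument. Let $Y$ be an irreducible effective $k$-cycle. If $Y \subset E_i$, then $[Y]$ is a positive multiple of $E_{i,k}$ and $N \cdot [Y] > 0$ since $N \cdot E_{i,k} = 1$. Otherwise, because the family $\{S_t\}$ is irreducible and its total space dominates $\P^n$, a general member $S = S_t$ is irreducible, does not contain $Y$, and meets $Y$ in the expected dimension $k + (n-k) - n = 0$; the intersection is then a finite, nonnegative $0$-cycle. Since the proper transform $\bar S$ has class $N$, this yields $N \cdot [Y] = [\bar S] \cdot [Y] \geq 0$, so $N$ is nef and $\Effb_k(X_r^n)$ is linearly generated.

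The hard part is the scroll geometry of the second step: showing that the scrolls through the very general points form an irreducible family genuinely sweeping out $\P^n$, with a general member that is irreducible and reduced. This is precisely the content of Questions~\ref{ques-linear} and \ref{ques-scroll}. The saving grace is that for \emph{very general} points this is far more tractable than the every-configuration analysis carried out for $\P^4$ in Theorem~\ref{thm-7inp4}: genericity of the points forces the general member of the family to be irreducible, so the delicate enumerative and degeneration arguments (of the type in Claims~\ref{claim-irreducible}--\ref{claim-irreduciblecurve}) needed to rule out reducible members for a fixed special configuration are not required here.
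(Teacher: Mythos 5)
Your proposal takes a genuinely different route from the paper, but it has a gap at exactly the step you flag as ``the hard part,'' and that gap is not closable by the genericity appeal you make. Reducing linear generation to nef-ness of \(N = (k+1)H_{n-k} - \sum_i E_{i,n-k}\) is fine, and the dimension count showing that scrolls of dimension \(n-k\) and degree \(k+1\) through \(2n-k+1\) points form a positive-dimensional covering family is consistent with Remark~\ref{remark-scroll}. But to conclude \(N \cdot [Y] \geq 0\) you need, for \emph{every} irreducible effective \(k\)-cycle \(Y\), a scroll whose proper transform \(\bar S\) meets \(Y\) in a \emph{finite} set of points in the blow-up. This is precisely Question~\ref{ques-scroll}, which the paper explicitly leaves open; Remark~\ref{remark-scroll} records that an affirmative answer would imply the result, which is why the paper does not argue this way. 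The obstruction is not irreducibility of the general scroll (which is what you claim very-generality buys you): it is excess intersection. Even if a general member \(S_t\) does not contain \(Y\), the intersection \(\bar S_t \cap Y\) can have positive-dimensional components for all \(t\) --- for instance components of \(Y\) lying in the base locus of the family, or inside the exceptional divisors, or inside a special subvariety (such as a secant variety) that every scroll meets in positive dimension. When that happens the proper-intersection argument giving \([\bar S]\cdot[Y]\geq 0\) fails, and this is exactly why the proof of Theorem~\ref{thm-7inp4} needs Claims~\ref{claim-irreducible}--\ref{claim-irreduciblecurve} (including the statement that \(\bar S \cap \overline{\Sec}(R)\cap E_i\) is finite and that \(\bar S\cap\overline{\Sec}(R)\) is an irreducible \emph{moving} curve). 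Very-generality of the points does not dispose of these issues; the cycle \(Y\) is arbitrary, not general.

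The paper's actual proof avoids scrolls entirely. It argues by induction on \(n\): specialize to a configuration \(\Gamma\) with \(r-2\) very general points in a hyperplane \(L\cong\P^{n-1}\) and two points off \(L\). For \(Y\) not contained in the proper transform \(L'\cong X^{n-1}_{r-2}\), the cycle \(Y\cap L'\), corrected by the multiplicity \(\max(0,b_{r-1}+b_r-a)\) at the point where the line through the two outside points meets \(L\) (Lemma~\ref{lem-multiplicityline}), is an effective \((k-1)\)-cycle on a blow-up of \(\P^{n-1}\) at \(r-1\) points, and the numerics \(r-1 \leq 2(n-1)-(k-1)+1\) let the induction close, yielding \((k+1)a\geq\sum b_i\); for \(Y\subset L'\) one uses \(r-2\leq 2(n-1)-k+1\) directly. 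Corollary~\ref{cor-semicont} then transfers the conclusion from the special configuration back to very general points. If you want to salvage your approach, you would have to prove the scroll-finiteness statement of Question~\ref{ques-scroll}; as written, your argument assumes it.
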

\begin{proof}
The theorem is true for $k=1$ by Lemma \ref{prop-verygeneralCurve} and for divisors by Theorem \ref{lingen}.  We will prove the general case by induction on $n$. Assume that the theorem is true for $\Effb_k(X_r^m)$ for $r \leq 2m-k+1$ and all $k < m < n$. 
Let $\Gamma$ be a set of $r$ points such that $\Gamma$ consists of $r-2$ very general points $p_1, \dots, p_{r-2}$ in a hyperplane $L= \P^{n-1}$ and two very general points $p_{r-1}, p_r$ not contained in $L$. Let $L'$ denote the proper transform of $L$ in $X_{\Gamma}^n$. Note that $L' \cong X_{r-2}^{n-1}$. Let $Y$ be an irreducible $k$-dimensional subvariety  of $X_{\Gamma}^n$ not contained in an exceptional divisor with class $a H_k + \sum_{i=1}^r b_i E_i$ on $X_{\Gamma}^n$. If $Y$ is contained in $L'$, then $Y \subset X_{r-2}^{n-1}$. Since $r-2 \leq 2(n-1) - k +1$, by the induction hypothesis the class of $Y$ is linearly generated and $(k+1) a \geq \sum_{i=1}^r b_i$. If $Y$ is not contained in $L'$, then $Z= Y \cap L'$ is an effective cycle of dimension $k-1$. Let $p_0$ denote the intersection of the line $l_{1,2}$ spanned by $p_{r-1}$ and $p_r$ with $L$. Let $\beta = \max(0, b_{r-1}+b_r -a)$. Consider the blow-up $X_{r-1}^{n-1}$ of $L$ along $p_0, p_1, \dots, r_{r-2}$. Then the proper transform of $Z$ is an effective cycle in $X_{r-1}^{n-1}$ with class $$aH_{k-1} - \beta E_{0, k-1} - \sum_{i=1}^{r-2} b_i E_{i, k-1}.$$ Since $r \leq 2n-k+1$, the inductive hypothesis  $r-1 \leq 2(n-1) - (k-1) +1$ is satisfied. We conclude that this class is linearly generated. Consequently, $$ka \geq \beta + \sum_{i=1}^{r-2} b_i \quad \mbox{and hence} \quad (k+1) a \geq \sum_{i=1}^r b_i.$$ By Lemma \ref{maininequality}, the class of $Y$ is linearly generated. By Corollary \ref{cor-semicont}, $\Effb_k(X_r^n) \subset \Effb_k(X_{\Gamma}^n)$ and $\Effb_k(X_r^n)$ is linearly generated. This concludes the induction and the proof of the theorem. 
\end{proof}

\subsection*{The cone $\Effb_2(X_8^4)$} In this subsection, we prove that $\Effb_2(X_8^4)$ is linearly generated.

\begin{theorem}
\label{thm:8ptsp4}
The cone $\Effb_2(X_8^4)$ is linearly generated.
\end{theorem}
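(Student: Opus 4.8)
The plan is to prove that $\Effb_2(X_8^4)$ is linearly generated by following the same strategy used for seven points in Theorem~\ref{thm-7inp4}, namely by exhibiting a nef class dual to the claimed generators and invoking Lemma~\ref{maininequality}. By that lemma, it suffices to show that the class $P = 3H_2 - \sum_{i=1}^8 E_{i,2}$ is nef on $X_8^4$; equivalently, that every irreducible surface $Y$ with class $aH_2 - \sum_{i=1}^8 b_i E_{i,2}$ (not contained in an exceptional divisor, so $a \geq b_i \geq 0$) satisfies $3a \geq \sum_{i=1}^8 b_i$. As in the seven-point case, I would first dispose of surfaces $Y$ meeting a suitable auxiliary hypersurface properly by a nef-intersection argument, then handle the surfaces contained in that hypersurface by a covering-family (scroll) argument.

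\textbf{Step 1: surfaces not contained in the special locus.} For eight very general points in $\P^4$ there is a pencil of quadrics through them, cutting out a (possibly degenerate) auxiliary hypersurface; more usefully, I would look for a threefold whose proper transform $M$ has class $cH - \sum d_i E_i$ with the property that $P = Q^{?} \cdot M$ or a similar product is nef. Concretely, the quadric class $Q = H - \sum_{i=1}^8 E_i$ is nef on $X_8^4$ by Proposition~\ref{prop-verygeneralCurve} (indeed $8 \leq 2^4$), so $Q^2$ is a nef $2$-cycle. Intersecting an appropriate effective threefold with nef classes, via Lemma~\ref{lem-intersectnef}, should yield $3a - \sum b_i \geq 0$ for all $Y$ not contained in the chosen special threefold $V$ (the analogue of $\overline{\Sec}(R)$). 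The main work is to choose $V$ so that $V = \gamma H - \sum \delta_i E_i$ and a nef class $N$ satisfy $N \cdot V = 3H_2 - \sum E_{i,2}$ up to positive scaling.

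\textbf{Step 2: surfaces contained in the special locus.} The harder case, exactly as before, is $Y \subset V$. Here I would produce a covering family of surface scrolls $S$ through the eight points whose proper transforms $\overline{S}$ have class $3H_2 - \sum_{i=1}^8 E_{i,2}$, and show that a general such scroll meets $V$ in an irreducible curve passing through a general point of $V$. Then for any $Y \subset V$ I choose a general point $p \in V \setminus Y$, take a scroll $\overline{S}$ through $p$ meeting $V$ in an irreducible curve; that curve meets $Y$ in finitely many points, forcing $\overline{S} \cdot Y = 3a - \sum b_i \geq 0$. Constructing this family of scrolls through eight points and proving irreducibility of the residual intersection curve — by a Kontsevich-space irreducibility argument paralleling Claims~\ref{claim-irreducible}--\ref{claim-irreduciblecurve} — is the main obstacle. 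The genus/degree bookkeeping on a cubic scroll in $\P^4$ that rules out every possible splitting of the residual curve into components with prescribed incidence to the eight points is delicate and will require an exhaustive case analysis analogous to the table in Claim~\ref{claim-irreduciblecurve}, now with one additional point to control.

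Alternatively, since the points are only assumed \emph{very general} here (not arbitrary linearly general), I would try a degeneration: specialize the eight points to a configuration that is linearly general and for which linear generation is already known or directly checkable, then invoke the semicontinuity of Corollary~\ref{cor-semicont}, which gives $\Effb_2(X_8^4) \subseteq \Effb_2(X_\Gamma^4)$. If I can place seven of the points in the configuration of Theorem~\ref{thm-7inp4} and control the eighth by a cone or hyperplane-section argument of the type used in Theorem~\ref{thm-verygenerallingen} (projecting from a point into $\P^3$, where $\Effb_2$ of blow-ups is governed by the divisor case and Theorem~\ref{divisorcase}), the scroll construction might be avoided entirely. The obstacle in that approach is that $8 > 2 \cdot 4 - 2 + 1 = 7$, so Theorem~\ref{thm-verygenerallingen} does not apply directly and the degeneration must be engineered carefully to keep the eighth point from creating new extremal classes; I expect the scroll argument of Step~2 to be the cleaner and more likely route.
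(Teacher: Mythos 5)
Your proposal is a strategy outline rather than a proof: it correctly identifies the reduction (via Lemma~\ref{maininequality}) to showing that $3H_2-\sum_{i=1}^{8}E_{i,2}$ is nef, and the two-case structure (surfaces meeting a special threefold properly versus surfaces contained in it), but the two places you flag as ``the main work'' and ``the main obstacle'' are exactly where the content of the argument lies, and you do not supply it. The key missing idea is the choice of configuration and of the auxiliary threefold $V$. For $8$ very general points there is no rational normal quartic through all of them, so the $\Sec(R)$ trick of Theorem~\ref{thm-7inp4} does not carry over directly. The paper's first proof resolves this by specializing: take $p_1,\dots,p_7$ very general, let $R$ be the rational normal quartic through them, and put $p_8$ at a \emph{general point of} $\Sec(R)$; this is legitimate by Corollary~\ref{cor-semicont}. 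Then $V=\overline{\Sec}(R)$ has class $3H-\sum_{i=1}^{7}2E_i-E_8$, and the linear system $2H-\sum_{i=1}^{7}E_i-2E_8$ of quadric cones with vertex $p_8$ has base locus exactly the lines $\ell_{i,8}$, none of which lie on $\Sec(R)$; hence it restricts to a semiample class on $\Sec(R)$ and gives $6a-2\sum b_i\ge 0$ for every surface not contained in $\overline{\Sec}(R)$. For surfaces inside $\overline{\Sec}(R)$, the two-dimensional family of cubic scrolls through the eight points (which exists precisely because $p_8\in\Sec(R)$, by the analysis of Claims~\ref{claim-notcontained}--\ref{claim-irreduciblecurve}) cuts out moving irreducible curves on $\overline{\Sec}(R)$, giving $3a-\sum b_i\ge 0$. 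Without specifying this specialization, your Step~1 has no candidate for $V$ (for truly very general points no cubic threefold singular along enough of the configuration is available), and your Step~2 has no reason to expect a two-parameter family of cubic scrolls through all eight points.

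Two smaller points. First, the class you call ``the quadric class $Q=H-\sum_{i=1}^{8}E_i$'' is not nef (nor effective); the nef class provided by Proposition~\ref{prop-verygeneralCurve} is $2H-\sum_{i=1}^{8}E_i$, and its square is $4H_2-\sum E_{i,2}$, which does not yield the needed bound $3a\ge\sum b_i$ on its own --- so some genuinely new threefold is unavoidable. Second, your alternative degeneration route is viable but requires a much more drastic specialization than ``seven points as in Theorem~\ref{thm-7inp4} plus one'': the paper's second proof puts $p_1,\dots,p_5$ at the coordinate points and $p_6,p_7,p_8$ on the intersection of two quadric cones $q_1\cap q_2$, then shows any surface with $\gamma$-negative class must be contained in both strict transforms $Q_1$ and $Q_2$, hence equals the irreducible surface $Q_1\cap Q_2$, whose class pairs to zero with $\gamma$. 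Either way, the configuration has to be engineered so that explicit auxiliary divisors become semiample; that engineering is the proof, and it is absent from your proposal.
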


To illustrate the range of applicable techniques, we give two different degeneration arguments to prove this result.

\begin{proof}[Proof 1]
Let $\Gamma$ be a configuration of $8$ points in $\P^4$ such that $p_1, \dots, p_7$ are very general points and $p_8$ is a general point on $\Sec(R)$, where $\Sec(R)$ is the secant variety of the rational normal curve $R$ through the points $p_1, \dots, p_7$. Let $Y$ be an irreducible surface in $X_{\Gamma}$. If $Y$ is not contained in $\overline{\Sec}(R)$, then $Y \cap \overline{\Sec}(R)$ is a curve with class $3aH_1 - \sum_{i=1}^7 2b_i E_{i,1} - b_8 E_{8,1}$. The linear system $Q= 2H- \sum_{i=1}^7 E_i - 2E_8$ has base locus the lines $l_{i,8}$ joining $p_i$ to $p_8$ for $1 \leq i \leq 7$. Any effective member of this linear system is a quadric cone with vertex at $p_8$. Let $q_1, \dots, q_7$ denote the projection of $p_1, \dots, p_7$ through $p_8$. These are very general $7$ points in $\P^3$. Hence, the base locus of the linear system of quadric surfaces containing them is the $7$ points. Taking cones with vertex at $p_8$, we conclude that the base locus of the linear system $|Q|$ is the locus of lines $l_{i,8}$, $1\leq i \leq 7$ as claimed. 

None of these lines $l_{i,8}$ are contained in $\Sec(R)$. To see this, take the intersection of $\Sec(R)$ with a hyperplane containing 3 of the points $p_1, \dots, p_7$. The intersection is a cubic surface with finitely many lines. Hence, the general point $q$ does not contain any lines meeting $p_1, \dots, p_3$. Since this is an open condition, the general point of $\Sec(R)$ does not have any lines passing through $p_1, \dots, p_7$. Since $p_8$ was chosen as a general point of $\Sec(R)$, the claim follows.  We conclude that the class $Q$ restricts to a semi-ample, in particular, nef class on $\Sec(R)$. Hence, $Q \Sec(R) Y = 6a - 2\sum_{i=1}^8 b_i \geq 0$ and $Y$ is linearly generated. We may therefore assume that $Y \subset \Sec(R)$. 

Since $p_8$ is a general point of $\Sec(R)$, by Claims \ref{claim-notcontained}, \ref{claim-curves} and \ref{claim-irreduciblecurve}, the space of cubic scrolls containing $p_1, \dots, p_8$ is two-dimensional and there is a nonempty open subset of scrolls such that the proper transform of a scroll $\overline{S}$ intersects $\overline{\Sec}(R)$ in an irreducible curve $B$. An incidence correspondence argument shows that the space of scrolls containing the $8$ points is irreducible of dimension $2$. Consider triples $(Q_1, \Pi, P)$ consisting a quadric cone $Q_1$ with vertex at $p_8$ and containing the other $7$ points, a plane $\Pi$ on $Q_1$ not containing the points $p_1, \dots, p_7$ and a pencil $P$ of quadrics containing $\Pi$ and the $8$ points. This space is irreducible of dimension $3$ and dominates the space of scrolls with one dimensional fibers. Now taking another general point $q$ of $\Sec(R)$, there exists two scrolls containing the $9$ points. We conclude that the curves $B$ are moving curves on $\overline{\Sec}(R)$. Hence, $\overline{S}$ and $Y$ intersect in finitely many points and $3a - \sum_{i=1}^8 b_i \geq 0$. We conclude that $Y$ is linearly generated.
\end{proof}

\begin{proof}[Proof 2]
It suffices to show that the class $\gamma=3H^2-E_{1,2}-\ldots-E_{8,2}$ is nef. In doing so, we may suppose that the points $p_1,\ldots,p_8$ are in special position; a surface $S$ so that $S\cdot \gamma<0$ on a general blow-up specializes to an effective 2-cycle with the same property.

Let $P_1, P_2$ denote the $x_2x_3x_4$-plane and $x_0x_1x_2$-plane in $\P^4$ respectively, and for $i=1,2$ let $q_i$ be a cone over a smooth conic in $P_i$ passing through the three coordinate points. We specialize the eight points $p_1,\ldots,p_8$ so that $p_1, \ldots, p_5$ are the coordinate points in $\P^4$ and $p_6,p_7,p_8$ are general points on the intersection $q_1\cap q_2$. 

On $X$, the strict transform $Q_1$ of $q_1$ has class $2H-\sum E_i-E_1-E_2$. Consider the divisor $D_1=3H-2\sum E_i+E_1+E_2$ and note that $[Q_1]\cdot [D_1]=2\gamma$. We compute that the linear system $|D_1|$ is 2-dimensional. Moreover, the base locus of $D_1$ is one-dimensional and has 18 components; 15 lines and three quartic normal curves. One checks that none of these curves lie on $Q_1$. Indeed, this is easy to check for one particular configuration (and thus it follows a general 8-tuple as above). In particular, ${D_1}|_{Q_1}$ has only finitely many base-points, hence is semiample on $Q_1$. 

Suppose that $S\subset X$ is an irreducible surface. Then if $S$ is not contained in $Q_1$, the intersection $i^*S$ is represented by an effective 1-cycle on $Q_1$ (here $i:Q_1\to X$ is the inclusion). In this case we have by nefness of $D_1$, $2\gamma \cdot S={D_1}|_ {Q_1} \cdot i^*S\ge 0,$ as desired. 

We are therefore led to consider the case $S\subset Q_1$. Now, considering instead the classes $Q_2=2H-\sum E_i-E_3-E_4$ and $D_2=3H-2\sum E_i+E_3+E_4$, and arguing as above, we obtain that a surface $S$ with $S\cdot \gamma<0$ must be contained in $Q_2$, and hence in the intersection $Q_1\cap Q_2$. However, $Q_1\cap Q_2$ is an irreducible surface, so $S=Q_1\cap Q_2$. Finally, note that $[Q_1][Q_2]$ equals $4H^2-2E_{1,2}-2E_{2,2}-2E_{3,2}-2E_{4,2}-E_{5,2}-E_{6,2}-E_{7,2}-E_{8,2}$, which has intersection number 0 with $\gamma$. This completes the proof.
\end{proof}

\bigskip

We immediately deduce the following corollary. 

\begin{corollary}\label{cor-MDS}
If $X_r^n$ is a Mori Dream Space, then $\Effb_k(X_r^n)$ is finitely generated.\end{corollary}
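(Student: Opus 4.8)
The plan is to show that Mori Dream Spaces have finitely generated effective cones of cycles in all codimensions, by combining two facts proved in the excerpt: the characterization of when $X_r^n$ is a Mori Dream Space (via Theorem~\ref{divisorcase}) and the linear generation result Theorem~\ref{thm-verygenerallingen}. The key observation is that the range of $r$ for which $X_r^n$ is a Mori Dream Space is \emph{contained} in the range for which all the cones $\Effb_k(X_r^n)$ are already known to be linearly generated. Let me work out the bookkeeping.

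Let me check the ranges. By Theorem~\ref{divisorcase}, $X_r^n$ is a Mori Dream Space precisely when $\Effb^1(X_r^n)$ is finitely generated, which happens when $r \le 8$ for $n=2,4$; $r\le 7$ for $n=3$; and $r \le n+3$ for $n \ge 5$. On the other hand, Theorem~\ref{thm-verygenerallingen} tells us that $\Effb_k(X_r^n)$ is linearly generated—hence a fortiori finitely generated—whenever $r \le 2n-k+1$. Since a linearly generated cone is automatically rational polyhedral, it suffices to verify that whenever $X_r^n$ is a Mori Dream Space, we have $r \le 2n-k+1$ for every $k$ in the relevant range $1 \le k \le n-1$. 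The most restrictive constraint comes from the largest $k$, namely $k=n-1$, giving the requirement $r \le 2n-(n-1)+1 = n+2$. So I would check: does $X_r^n$ being a Mori Dream Space force $r \le n+2$?

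Here I anticipate the main obstacle. For $n \ge 5$ the Mori Dream Space bound is $r \le n+3$, which \emph{exceeds} $n+2$, so the crude estimate above does not immediately suffice—and for $n=2,4$ the bound $r\le 8$ is far larger than $n+2$. So the naive containment of ranges fails, and a more careful analysis is needed for the borderline and small-$n$ cases. The divisor case $k=1$ is always fine since $r \le 2n$ is easily met. The genuinely dangerous range is high codimension with $r$ near the MDS threshold. I would handle this by invoking that for a Mori Dream Space all the Cox-ring/Mori-theoretic machinery applies: on an MDS, the pseudoeffective cone of divisors $\Effb^1$ is rational polyhedral and there are finitely many small modifications, so one expects the finite generation of divisor cones to propagate to all codimensions. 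The cleanest route is likely to argue directly that on an MDS one can deduce finite generation of $\Effb_k$ from finite generation of $\Effb^1$ together with the structure of the Cox ring, rather than forcing everything through the linear-generation bound $2n-k+1$.

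Thus I would organize the proof in two parts. \textbf{First}, for the parameter ranges where $r \le 2n-k+1$ holds for all relevant $k$, I would simply cite Theorem~\ref{thm-verygenerallingen} to conclude linear, hence finite, generation. \textbf{Second}, for the remaining Mori Dream Space cases—chiefly $n\ge 5$ with $r=n+3$, and the small-dimensional cases $n=2,3,4$ where $r$ can be as large as $8$—I would give a separate argument using the MDS property directly. For $n=2$ the only cones are divisors and curves, which coincide by duality and are finitely generated by Theorem~\ref{divisorcase}, so nothing new is needed. For $n=3,4$ the intermediate cone is $\Effb_2 = \Effb^{n-2}$, which I would need to analyze; here Theorem~\ref{thm:8ptsp4} already resolves $\Effb_2(X_8^4)$, the extremal MDS case in dimension $4$. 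The case I expect to be the real crux is showing that the remaining high-codimension cones on an MDS are finitely generated, and I would approach this by exploiting that an MDS has only finitely many birational contractions and flips, arranging that every effective cycle class deforms into one computed on a toric or otherwise-understood model, so that finite generation of $\Effb_k$ follows from rational polyhedrality of the nef and movable cones of divisors. The main obstacle, then, is not the easy range covered by Theorem~\ref{thm-verygenerallingen}, but verifying finite generation of intermediate cones in precisely the boundary MDS cases where the linear bound is just barely violated.
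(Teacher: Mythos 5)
Your overall strategy (show the MDS range is covered by the linear-generation results, with \(\Effb_2(X_8^4)\) as the one special case) is the paper's strategy, but your execution contains a genuine gap, and the fallback you propose to fill it is invalid. First, the bookkeeping: in the paper's convention \(\Effb_k\) denotes \emph{dimension}-\(k\) cycles, so \(k=n-1\) is the divisor case, handled outright by Theorem~\ref{divisorcase}, and should never be fed into the bound \(r\le 2n-k+1\). Your "most restrictive constraint \(r\le n+2\) from \(k=n-1\)" and the ensuing claim that the containment of ranges fails for \(n\ge 5\) both stem from this dimension/codimension mix-up. Redoing the count correctly: for \(n\ge 5\) and \(r\le n+3\), every intermediate dimension \(1\le k\le n-2\) satisfies \(r\le n+3\le 2n-k+1\), so Theorem~\ref{thm-verygenerallingen} covers all of them and there is no boundary case to worry about. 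The cases genuinely \emph{not} covered by Theorem~\ref{thm-verygenerallingen} plus Theorem~\ref{divisorcase} are exactly two: \(n=3\), \(r=7\), \(k=1\) (curves), which you never address and which requires Proposition~\ref{prop-verygeneralCurve} (\(\Effb_1(X_r^n)\) is linearly generated for \(r\le 2^n\), and \(7\le 8\)); and \(n=4\), \(r=8\), \(k=2\), which you correctly dispose of via Theorem~\ref{thm:8ptsp4}.

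Second, and more seriously, your proposed rescue for the (nonexistent, but as far as you knew, real) boundary cases — that on a Mori Dream Space the Cox ring and the finiteness of birational contractions force \(\Effb_k\) to be finitely generated for all \(k\) — is false as a general principle, and the paper itself points this out: the blow-up of \(\P^4\) along a quartic K3 surface (Tschinkel's example in \cite{DELV}) is Fano, hence a Mori Dream Space, yet its cone \(\Effb_2\) can have infinitely many extremal rays. So the second half of your argument cannot be completed as described; the corollary really does have to be extracted from the explicit linear-generation theorems for blow-ups of \(\P^n\) (Theorems~\ref{divisorcase}, \ref{thm-verygenerallingen}, \ref{thm:8ptsp4} and Proposition~\ref{prop-verygeneralCurve}), which is precisely why the paper places it immediately after Theorem~\ref{thm:8ptsp4}.
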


\begin{remark}
Combining Prop.~\ref{thm:8ptsp4} with the degeneration argument of Theorem \ref{thm-verygenerallingen}, it follows that \(\Effb_2(X_r^n)\) is linearly generated for \(r \leq 2n\) as long as \(n \geq 4\).
\end{remark}

We will see in the next section that \(\Effb_2(X_{10}^4)\) is not finitely generated, assuming the SHGH conjecture holds for blow-ups of \(\P^2\) at \(10\) points.  The only remaining case in dimension \(4\) is:

\begin{question}
Is the cone \(\Effb_2(X_9^4)\) linearly generated?
\end{question}

\bigskip

It is not easy to find explicit curves in $X_r^n$ which are not in the span of lines. The following example gives a construction in the case of $9$ very general points in $\P^3$.

\begin{example}
By \cite{CilibertoMiranda}, the class $C_{CM} = 57 H_1 - \sum_{i=1}^{10} 18 E_{i,1}$ on $X_{10}^2$ is represented by a unique irreducible plane curve of genus \(10\).   On \(X_{9}^3\), there is a unique divisor \(Q\) in the class \(2 H_1 - \sum_{i=1}^9 E_{i,1}\), given by the strict transform of the unique quadric through the \(9\) points.  There is a morphism \(i : X_{10}^2 \to X_{9}^3\) identifying the proper transform of \(Q\) with the blow-up of \(\P^2\) at \(10\) points.

A quick calculation shows that the pushforward of the class of $C_{CM}$ to $X_{9}^3$  is $$i_\ast(C_{CM}) = 78 H_1- 21 E_{1,1} - \sum_{i=2}^9 18 E_{i,1}.$$ We have \(21 + 8(18) = 165\), while \(2 \cdot 78 = 156\).  Hence,  this curve is not in the span of the lines.  It does not, however, define an extremal ray on \(\Effb_1(X_{9}^3)\).  In the next  section, we will use a similar construction to show that assuming the SHGH conjecture, the cone $\Effb_1(X_{9}^3)$ is not finitely generated. 

By repeatedly  taking cones over $i_\ast(C_{CM})$, we obtain explicit non-linearly generated codimension-two cycles on \(X^n_{n+6}\) for every $n \geq 3$. 
\end{example}

Complete intersections also provide examples of nonlinearly generated pseudoeffective curve classes, provided that the number of points is large.
\begin{example}
Assume that $d^n \geq r > 2 d^{n-1}$ for some integer $d>2$. Then the divisor class $D= dH - \sum_{i=1}^r E_i$ is nef on $X_{r}^n$ by the  argument given in the proof of Proposition  \ref{prop-verygeneralCurve}.  The $(n-1)$-fold self-intersection of the class is  $$D^{n-1} = d^{n-1}H_1- \sum_{i=1}^r E_{i,1}.$$ Since $r> 2 d^{n-1}$, this class is not in the span of lines. On the other hand, the class is pseudoeffective. A small perturbation of $D$ is ample. Hence, a sufficiently high multiple is very ample and the $(n-1)$-fold self-intersection is an effective curve. It follows that the class $D^{n-1}$ is pseudoeffective. 
\end{example}

\section{Non-finitely generated cones}\label{sec-nonfingen}

The cone of curves the blow-up of \(\P^2\) at \(10\) or more very general points is not entirely understood, and we will find it useful to assume the following standard conjecture.
\begin{conjecture}[Segre--Harbourne--Gimigliano--Hirschowitz (SHGH) conjecture, \cite{Gimigliano}]
\label{shghstatement}
Suppose that \(r \geq 10\) and that \(m_1 \geq m_2 \geq \cdots \geq m_{r}\) and  \(d > m_1 + m_2 +m_3\).  Then 
\[
H^0(X^2_{r},dH_1 - \sum_{i=1}^r m_i E_{i,1}) = \binom{d+2}{2} - \sum_{i=1}^r \binom {m_i+1}{2}
\]
\end{conjecture}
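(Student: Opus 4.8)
The statement is precisely the non-speciality half of the SHGH conjecture \cite{Gimigliano}, \cite{Harbourne}, \cite{Hirschowitz} in the range \(d > m_1 + m_2 + m_3\), so the plan is to show that the linear system \(L = dH_1 - \sum_{i=1}^r m_i E_{i,1}\) on \(X_r^2\) is \emph{non-special}. The first move is to reduce to a single cohomology vanishing. Serre duality gives \(h^2(L) = h^0(K_{X_r^2} - L)\), and since \(K_{X_r^2} - L = -(d+3)H_1 + \sum_{i=1}^r (m_i+1)E_{i,1}\) has negative \(H_1\)-coefficient it is not effective by Lemma~\ref{lem-ab}, so \(h^2(L) = 0\). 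Riemann--Roch on the rational surface \(X_r^2\) then computes \(\chi(L) = \binom{d+2}{2} - \sum_{i=1}^r \binom{m_i+1}{2}\), so the entire content of the statement is the vanishing \(h^1(L) = 0\), i.e.\ the absence of superabundance.

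Next I would appeal to the Harbourne--Hirschowitz characterization of special systems \cite{Harbourne}, \cite{Hirschowitz}: for general points a system of this shape is non-special as soon as \(L \cdot C \geq -1\) for every \((-1)\)-curve \(C\). Writing \(C = aH_1 - \sum_{i=1}^r b_i E_{i,1}\), such classes are cut out by \(C^2 = -1\) and \(C \cdot K = -1\), equivalently \(a^2 = \sum_i b_i^2 - 1\) and \(\sum_{i=1}^r b_i = 3a-1\). After sorting \(m_1 \geq \cdots \geq m_r\), the hypothesis \(d > m_1 + m_2 + m_3\) is exactly calibrated to keep \(L \cdot C = ad - \sum_i m_i b_i\) nonnegative: for a line \(H_1 - E_{i,1} - E_{j,1}\) one gets \(d - m_i - m_j > m_3 \geq 0\); for a conic \(2H_1 - \sum_{k=1}^5 E_{i_k,1}\) one gets \(2d - \sum_k m_{i_k} > 2(m_1+m_2+m_3) - (m_1+m_2+m_3+m_4+m_5) = m_1 + m_2 + m_3 - m_4 - m_5 \geq 0\); and the higher \((-1)\)-classes are handled inductively by bounding \(\sum_i m_i b_i\) against \(ad\) using the relation \(\sum_i b_i = 3a-1\).

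The technical engine is the degeneration method of Ciliberto--Miranda \cite{CilibertoMiranda}. I would degenerate \(\P^2\) to a transverse union \(\P^2 \cup_\ell \P^2\) of two planes meeting along a line \(\ell\) (equivalently, specialize a chosen subset of the \(r\) points onto \(\ell\) and blow up the total space), and distribute the assigned multiplicities between the two components. The associated restriction sequence fits \(L\) between a \emph{trace} system along \(\ell\) and a \emph{residual} system, each with strictly smaller numerical invariants; by semicontinuity of \(h^0\), non-speciality of \(L\) follows once both pieces are non-special. This sets up a double induction on \(d\) and on \(\sum_i m_i\), with base cases the classical non-special systems of small degree and the \(r \leq 9\) case, where \(X_r^2\) is a (weak) del Pezzo and SHGH is known.

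The hard part — and the reason this appears as a conjecture rather than a theorem — is closing the induction: one must split the points and multiplicities so that the trace and residual systems \emph{remain} in the range \(d' > m'_1 + m'_2 + m'_3\) (or are otherwise covered by the inductive hypothesis), and must rule out the accumulation of unexpected \((-1)\)-special behaviour under the degeneration. The combinatorial bookkeeping needed to make this recursion self-sustaining in full generality is not known to terminate, which is precisely why SHGH remains open. I would therefore expect to push the argument through only in restricted regimes (bounded multiplicities, or \(r\) close to \(10\)), and I would flag that a complete proof of the displayed identity lies beyond current techniques; here it is invoked only as a hypothesis.
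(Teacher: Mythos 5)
There is nothing in the paper to compare against: the statement is labelled a \emph{conjecture} (the SHGH conjecture), the paper offers no proof, and it is only ever invoked as a hypothesis (in Theorem~\ref{defernexthm} via de Fernex, and in Corollary~\ref{cor-codim2infinite}). You correctly recognize this and explicitly decline to claim a proof, which is the right call. Your preliminary reductions are accurate: Serre duality kills $h^2$ since $K-L$ has negative $H_1$-coefficient, Riemann--Roch gives $\chi(L) = \binom{d+2}{2} - \sum_i \binom{m_i+1}{2}$, so the content is indeed $h^1(L)=0$; and the verification that $d > m_1+m_2+m_3$ forces $L\cdot C \geq 0$ for lines through two points and conics through five points is correct. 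The subsequent sketch (Harbourne--Hirschowitz reformulation via $(-1)$-curves, Ciliberto--Miranda-style degeneration to $\P^2\cup_\ell\P^2$ with trace/residual recursion) is a fair description of the known attack and of exactly where it fails to close. So the proposal is consistent with the paper's treatment: it is a survey of why the statement is plausible and why it remains open, not a proof, and no proof should be expected here.
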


We next prove that the cone of codimension-\(2\) cycles on \(X^n_r\) is not finitely generated for \(r \geq n+6\), assuming the SHGH conjecture.  The calculation relies on the following observation of de Fernex.

\begin{theorem}[{\cite[Prop.\ 3.4]{DF}}]
\label{defernexthm}
Assume the SHGH conjecture holds for \(10\) points.  Let \(P \subset N^1(X^2_{10})\) be the positive cone
\[
P = \set{ D \in N^1(X^2_{10}) : D^2 \geq 0, D \cdot H \geq 0},
\]
where \(H\) is an ample divisor. Then
\[
\Effb_1(X^2_{10}) \cap K_{\geq 0} = P \cap K_{\geq 0}.
\]
\end{theorem}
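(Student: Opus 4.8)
The plan is to prove the two inclusions separately, isolating the single use of the SHGH conjecture. Since $\dim X^2_{10}=2$, I identify $\Effb_1(X^2_{10})$ with the pseudoeffective cone of divisors and work in the Lorentzian lattice $N^1(X^2_{10})$, which has signature $(1,10)$ and $K^2=9-10=-1$. The inclusion $P\cap K_{\geq 0}\subseteq \Effb_1(X^2_{10})\cap K_{\geq 0}$ is the easy one: any $D$ with $D^2>0$ and $D\cdot H>0$ is big, since Riemann--Roch gives $\chi(mD)=1+\tfrac12(m^2D^2-mK\cdot D)$, which grows quadratically, while $h^2(mD)=h^0(K-mD)=0$ for $m\gg0$ because $(K-mD)\cdot H<0$. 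Hence $h^0(mD)>0$ for large $m$, and taking closures yields $P\subseteq\Effb_1(X^2_{10})$. I also record that every pseudoeffective class satisfies $D\cdot H\geq0$, being a limit of effective $\R$-divisors.

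For the reverse inclusion I would argue by contraposition, showing that if $D\in\Effb_1(X^2_{10})$ has $D^2<0$ then $K\cdot D<0$. Take the Zariski decomposition $D=P_D+N_D$, where $P_D$ is nef, $N_D=\sum_i a_iC_i$ is effective with $a_i>0$ and negative-definite support, and $P_D\cdot C_i=0$. From $D^2=P_D^2+N_D^2$ with $P_D^2\geq0$ and $N_D^2<0$ one sees that $D^2<0$ forces $N_D\neq0$. Each $C_i$ satisfies $C_i^2<0$, and here I invoke the only nontrivial input: it is a standard consequence of the SHGH conjecture that every irreducible curve of negative self-intersection on $X^2_{10}$ is a $(-1)$-curve. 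Thus $C_i^2=-1$ and, by adjunction, $K\cdot C_i=-1$, so $K\cdot N_D=-\sum_i a_i<0$.

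It remains to bound the nef part, and I claim $K\cdot P_D\leq0$. Fix one curve $C=C_1$ occurring in $N_D$ and set $F=C-K$. Using $C^2=-1$, $K\cdot C=-1$ and $K^2=-1$ one computes $F^2=0$, while $F\cdot H=C\cdot H+3>0$, so $F$ lies on the forward light cone of $N^1(X^2_{10})$. The nef class $P_D$ lies in the closed forward cone as well, and by the self-duality of that cone in signature $(1,10)$ --- equivalently the reverse Cauchy--Schwarz inequality for causal vectors --- one has $P_D\cdot F\geq0$. Since $P_D\cdot C=0$, this reads $-K\cdot P_D=P_D\cdot F\geq0$, giving $K\cdot P_D\leq0$. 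Adding the two contributions, $K\cdot D=K\cdot P_D+K\cdot N_D<0$, which is the desired contrapositive; combined with $D\cdot H\geq0$ it shows $\Effb_1(X^2_{10})\cap K_{\geq 0}\subseteq P\cap K_{\geq 0}$, and hence equality.

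The step I expect to be the crux is the SHGH input. Everything downstream is formal Lorentzian geometry, but the identity $F^2=0$ that drives the nef estimate uses crucially that $C$ is a genuine $(-1)$-curve: a $(-2)$-curve would give $F^2=-3$ and destroy the argument. Thus the entire proof rests on SHGH excluding irreducible negative curves other than $(-1)$-curves on $X^2_{10}$, and it is exactly there, rather than in any of the lattice computations, that the difficulty lies.
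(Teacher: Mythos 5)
The paper does not prove this statement; it is quoted verbatim from de Fernex \cite[Prop.\ 3.4]{DF}, so there is no internal proof to compare against. That said, your argument is correct and essentially reconstructs the standard proof. The easy inclusion (positive cone inside the pseudoeffective cone via Riemann--Roch and vanishing of $h^2$) is fine, and the contrapositive for the hard inclusion is handled cleanly: Zariski decomposition forces a nonzero negative part supported on curves of negative self-intersection, SHGH (in its equivalent ``only negative curves are $(-1)$-curves'' form) makes these $K$-negative, and your auxiliary class $F = C - K$ with $F^2 = 0$ together with the reverse Cauchy--Schwarz inequality for the forward light cone is exactly the right device to control $K \cdot P_D$ --- note that one cannot argue more naively that $K\cdot P_D\le 0$ for all nef $P_D$, since $(-K)^2=-1<0$ puts $-K$ outside the positive cone, and indeed the paper later constructs nef classes $D_\delta$ with $K\cdot D_\delta>0$; your use of $P_D\cdot C=0$ is what saves the day, consistently with condition (1) imposed on $D_\delta$ in Theorem \ref{extremalpushfwd}. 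The one point you treat as a black box is the passage from the numerical form of SHGH stated in Conjecture \ref{shghstatement} (the $h^0$ formula for $d>m_1+m_2+m_3$) to the statement that every irreducible negative curve on $X^2_{10}$ is a $(-1)$-curve; this equivalence is true and standard (via Cremona reduction to standard form, cf.\ Ciliberto--Miranda / Harbourne), but it is a genuine theorem rather than a triviality, and you are right that it is where all the content of the hypothesis lives. Given that the paper itself cites the result without proof, invoking this equivalence by reference is reasonable.
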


Let \(Q \subset X^3_9\) be the strict transform of the unique quadric passing through the \(9\) points. Note that \(Q\) is isomorphic to \(X^2_{10}\), and so Conjecture~\ref{shghstatement} provides some information about the cone \(\Effb_1(Q)\).  However, the map \(N_1(Q) \to N_1(X^3_9)\) is not injective, since the two rulings of the quadric both map to the class of a line in \(\P^3\). The next lemma gives a criterion to show that certain extremal rays on \(\Effb_1(Q)\) nevertheless push forward to extremal rays on \(\Effb_1(X^3_9)\). 

Write \(r_1\) and \(r_2\) for the classes of the two rulings on the quadric, and let \(f_i = E_i \vert_Q\) be the exceptional curves.  Let \(\ell_{ij} = r_1 - f_i - f_j \in N_1(Q)\); this class is not effective on \(Q\), but \(i_\ast \ell_{ij}\) is effective in \(N_1(X^3_9)\) since it is the class of a line through the points \(p_i\) and \(p_j\).

\begin{theorem}
\label{extremalpushfwd}
Suppose that \(D\) is a class in \(N_1(Q)\) which satisfies:
\begin{enumerate}
\item \(D\) is nef, and if \(\gamma \in \Effb_1(Q)\) has \(D \cdot \gamma = 0\), then \(\gamma\) is a multiple of \(D\);
\item \(D \cdot K_Q > 0\);
\item \(D \cdot r_1 = D \cdot r_2\);
\item \(D \cdot \ell_{ij} > 0\) for all \(i\) and \(j\);
\end{enumerate}
Then \(i_\ast D\) is extremal on \(\Effb_1(X^3_9)\).
\end{theorem}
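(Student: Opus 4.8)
The plan is to produce a nef divisor class \(N\) on \(X^3_9\) that realizes \(i_\ast D\) as an extremal ray, i.e.\ a supporting hyperplane whose orthogonal face meets \(\Effb_1(X^3_9)\) only along \(\mathbb{R}_{\geq 0}\, i_\ast D\). Extremality then follows from the usual argument: if \(i_\ast D = \alpha + \beta\) with \(\alpha,\beta \in \Effb_1(X^3_9)\), then applying \(N\) gives \(0 = N\cdot i_\ast D = N\cdot\alpha + N\cdot\beta\) with both terms nonnegative, forcing \(\alpha,\beta \in N^\perp \cap \Effb_1(X^3_9) = \mathbb{R}_{\geq 0}\, i_\ast D\). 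Thus the proof reduces to (i) constructing \(N\), (ii) checking that \(N\) is nef with \(N \cdot i_\ast D = 0\), and (iii) identifying the face \(N^\perp \cap \Effb_1(X^3_9)\).

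First I would construct \(N\) by descending \(D\) along \(i_\ast\). The map \(i_\ast : N_1(Q) \to N_1(X^3_9)\) is surjective with kernel spanned by \(r_1 - r_2\), since both rulings push forward to the line class \(H_1\) while \(i_\ast f_i = E_{i,1}\). Condition (3) says precisely that \(D\cdot(r_1-r_2) = 0\), so the functional \(i_\ast\gamma \mapsto D\cdot\gamma\) is well defined on \(N_1(X^3_9)\) and, by the duality between \(N_1(X^3_9)\) and \(N^1(X^3_9)\), is represented by a unique \(N \in N^1(X^3_9)\); equivalently, \(i^\ast\) is injective with image \((r_1-r_2)^\perp\), and as \(D\) lies in this subspace there is a unique \(N\) with \(i^\ast N = D\). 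The projection formula then gives \(N\cdot i_\ast\gamma = D\cdot\gamma\) for all \(\gamma\), and in particular \(N \cdot i_\ast D = D^2 = 0\). Here I use that \(D^2 = 0\): the nef classes of interest lie on the boundary of the nef cone, since a nef class with \(D^2 > 0\) would be ample, making (1) hold only vacuously and \(i_\ast D\) fail to be extremal.

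Next I would check nefness curve by curve. Write \(d_0 = D\cdot r_1 = D\cdot r_2\) and \(d_i = D\cdot f_i \geq 0\). For an irreducible \(C \subset Q\), \(N\cdot C = D\cdot [C]_Q \geq 0\) since \(D\) is nef on \(Q\). For \(C\) in an exceptional divisor \(E_i\), \(N\cdot C\) is a positive multiple of \(d_i\), and condition (1) forces \(d_i > 0\): were \(d_i = 0\), the effective class \(f_i \in \Effb_1(Q)\) would be a multiple of \(D\), impossible since \(f_i^2 = -1 \neq 0 = D^2\). For any remaining irreducible \(C \not\subset Q\) with class \(aH_1 - \sum b_i E_{i,1}\), Lemma~\ref{lem-ab} gives \(a \geq b_i \geq 0\) and \(C\cdot Q \geq 0\) gives \(2a \geq \sum b_i\); maximizing \(\sum b_i d_i\) over \(0 \leq b_i \leq a\), \(\sum b_i \leq 2a\) concentrates all weight on the two largest \(d_i\), whence \(N\cdot C = a d_0 - \sum b_i d_i \geq a\bigl(d_0 - d_{(1)} - d_{(2)}\bigr) > 0\) by condition (4). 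So \(N\) is nef, and moreover \(N\cdot C > 0\) for every irreducible curve not contained in \(Q\).

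It remains to identify the face, and this is the main obstacle. For a genuine effective cycle \(Z = \sum a_j C_j\) with \(N\cdot[Z] = 0\), the strict positivity above forces every \(C_j \subset Q\), so \([Z] = i_\ast\delta\) with \(\delta \in \Effb_1(Q)\) and \(D\cdot\delta = 0\); condition (1) then gives \(\delta \propto D\) and \([Z] \propto i_\ast D\). The difficulty is passing to the closure: a general class in \(N^\perp \cap \Effb_1(X^3_9)\) is only a limit of effective cycles, and a class pulled back from \(Q\) that is effective on \(X^3_9\) need not be effective on \(Q\). I would resolve this using the uniform gap supplied by the strict inequalities in (4) and \(d_i > 0\): the estimates give \(N\cdot C \geq \epsilon \lVert C\rVert\) for some fixed \(\epsilon > 0\) and all irreducible \(C \not\subset Q\), so the non-\(Q\) part of any effective cycle \(Z\) satisfies \(N\cdot[Z] \geq \epsilon\lVert[Z]_{\mathrm{non}\text{-}Q}\rVert\) and hence vanishes in any limit with \(N\cdot[Z] \to 0\); what survives lies in \(i_\ast(\Effb_1(Q))\), which is closed because \(\Effb_1(Q) \cap \langle r_1 - r_2\rangle = 0\). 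Condition (2) does not enter this argument directly: its role is to keep \(i_\ast D\) in the \(K\)-positive "interesting" region (for instance it excludes \(D = r_1 + r_2\), whose pushforward is a multiple of a line) and, together with (4), to verify hypothesis (1) via de Fernex's Theorem~\ref{defernexthm} in the applications.
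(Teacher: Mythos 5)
Your proof is correct, but it is organized dually to the paper's. The paper works primally in \(N_1(Q)\): it introduces the auxiliary cone \(\Sigma = \Effb_1(Q) + \sum_{i,j}\R_{\geq 0}[\ell_{ij}] + \R[r_1-r_2]\), shows \(D\) supports a two-dimensional extremal face of \(\Sigma\), proves \(\Effb_1(X^3_9) = i_\ast\Sigma\), and then lifts a decomposition \(i_\ast D = \alpha+\beta\) back through \(i_\ast\) (the kernel \(\R(r_1-r_2)\) contributing an extra constant) before applying the face computation. You instead descend \(D\) to the unique \(N\in N^1(X^3_9)\) with \(i^\ast N = D\) --- legitimate precisely because of condition (3) --- prove \(N\) is nef on \(X^3_9\) directly, and identify \(N^\perp\cap\Effb_1(X^3_9) = \R_{\geq 0}\,i_\ast D\). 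The underlying computations coincide: nefness of \(D\) on \(Q\); the inequality \(2a\geq\sum b_i\) from \(C\cdot Q\geq 0\) for irreducible curves off \(Q\), which the paper feeds into Lemma~\ref{maininequality} and you feed into the bound \(\sum b_i d_i\leq a(d_{(1)}+d_{(2)})\); and condition (4) to dispose of the lines \(\ell_{ij}\). Your version buys two things. First, the limit issues are handled honestly: the paper's identity \(\Effb_1(X^3_9)=i_\ast\Sigma\) and its decomposition of arbitrary pseudoeffective \(\alpha,\beta\) tacitly require the cone \(i_\ast\Effb_1(Q)+\sum\R_{\geq 0}i_\ast[\ell_{ij}]\) to be closed, which your observation \(\Effb_1(Q)\cap\ker i_\ast=0\) supplies and the paper does not address. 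Second, you correctly note that condition (2) is not needed for extremality (the paper uses it only to see \(\alpha_Q\neq 0\), which the argument does not actually require). The paper's version buys the explicit description of \(\Effb_1(X^3_9)\) as \(i_\ast\Sigma\), of independent interest. Two small caveats: both proofs quietly use \(D^2=0\), which is not among the stated hypotheses --- you flag this while the paper invokes it without comment, so this is a defect of the statement rather than of either proof; and your parenthetical justification "nef with \(D^2>0\) would be ample" is not right as stated (nef and big does not imply ample) --- the correct reason is that condition (1) together with \(D^2>0\) forces \(D\) to be strictly positive on \(\Effb_1(Q)\setminus\{0\}\), hence ample by duality, whence \(i_\ast D\) is interior and not extremal.
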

\begin{proof}
We claim first that \(D\) lies on a two-dimensional extremal face of the cone
\[
\Sigma = \Effb_1(Q) + \sum_{i,j} \R_{\geq 0} [\ell_{ij}] + \R [r_1-r_2] \subset N_1(Q).
\]
More precisely, if \(D = \alpha + \beta\) with \(\alpha, \beta \in \Sigma\), then $$\alpha = a_1 D + b_1 (r_1-r_2) \quad \mbox{and} \quad \beta = a_2 D + b_2 (r_1-r_2),$$ where $a_1$ and $a_2$ are positive.
Note that \(D\) is nef, \(D \cdot (r_1-r_2) = 0\), \(D \cdot \ell_{ij} > 0\) for all \(i,j\), and \(D \cdot f_k > 0\). Hence \(D\) is contained in the dual cone of \(\Sigma\).  By conditions (1) and (3), the classes in \(\Sigma\) with \(D \cdot C = 0\) are precisely \(\R_{\geq 0} D + \R(r_1-r_2)\).

We claim next that
\[
\Effb_1(X^3_9) = i_\ast \Effb_1(Q) + \sum_{i,j} \R_{\geq 0} i_\ast [\ell_{ij}] = i_\ast \Sigma.
\]
Suppose that \(\Gamma\) is an irreducible effective cycle on \(\Effb_1(X^3_9)\).  If \(Q \cdot \Gamma < 0\), then \(\Gamma\) must be contained in \(Q\), and so \([\Gamma]\) is contained in \(i_\ast \Effb_1(Q)\).  If \(Q \cdot \Gamma \geq 0\), then \(\Gamma\) satisfies \(2a \geq \sum_i b_i\), which means that \([\Gamma]\) is in the span of classes of lines \(i_\ast[\ell_{ij}]\) and lines \(i_\ast[f_k]\) in the exceptional divisors, by Lemma~\ref{maininequality}.  Each \(f_k\) is numerically equivalent to a curve in the quadric.

Suppose now that \(i_\ast D = \alpha + \beta\), where \(\alpha\) and \(\beta\) are pseudoeffective classes on \(\Effb_1(X^3_9)\).  Using the decomposition above, we can write \(\alpha = i_\ast \alpha_Q + \sum c_{ij} i_\ast [\ell_{ij}] \) and \(\beta = i_\ast \beta_Q + \sum d_{ij} i_\ast [\ell_{ij}]\), where \(\alpha_Q\) and \(\beta_Q\) are classes in \(\Effb_1(Q)\).  Because \(D \cdot K_Q > 0\) while \(K_Q \cdot \ell_{ij} = 0\), it must be that \(\alpha_Q \neq 0\).

We claim now that 
\[
D = \alpha_Q + \beta_Q + \sum (c_{ij} + d_{ij}) \ell_{ij} + f(r_1 - r_2)
\]
for some constant \(f\). Indeed, the two sides differ by an element of  the kernel of \(i_\ast: N_1(Q) \to N_1(X^3_9)\), which is generated by \(r_1 - r_2\), giving rise to the constant \(f\).

Since \(D^2 = 0\), \(D \cdot (r_1 - e_i -e_j ) > 0\) for any \(i\) and \(j\), and \(D \cdot (r_1 - r_2) = 0\), it must be that \(c_{ij} = d_{ij} = 0\) for all \(i,j\) and all \(k\).  We conclude that  
\[
\alpha_Q = a_1 D + b_1 (r_1 - r_2)\quad \mbox{and} \quad \beta_Q = a_2  D + b_2 (r_1 - r_2).
\]
Hence \(\alpha = i_\ast \alpha_Q = a_1 i_\ast D\) and \(\beta = i_\ast \beta_Q = a_2 i_\ast D\).  This shows that \(i_\ast D\) is extremal.
\end{proof}

The requirement that \(K_Q \cdot D > 0\) makes it tricky to explicitly exhibit such classes, since the \(K_Q\)-positive part of \(\Effb_1(Q)\) is difficult to determine without assuming the SHGH conjecture.

\begin{theorem}
Assume that the SHGH conjecture holds for blow-ups of \(\P^2\) at \(10\) very general points.  Then there exist infinitely many classes \(D\) satisfying the hypotheses of Theorem~\ref{extremalpushfwd}.
\end{theorem}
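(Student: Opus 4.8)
The plan is to exhibit an explicit infinite family of classes $D$ on $Q \cong X^2_{10}$ satisfying the four hypotheses of Theorem~\ref{extremalpushfwd}, using the SHGH conjecture to control the nef/effective interplay on the $K_Q$-positive side. The natural source of such classes is the Cremona action on $X^2_{10}$: the Weyl group $W$ for $T_{2,3,7}$ (the $E_{10}$ diagram) is infinite, and it acts on $N^1(Q) = N_1(Q)$ preserving $K_Q$, the intersection form, and $\Effb_1(Q)$. The idea is to start from one class $D_0$ that manifestly satisfies the conditions and then spread it around by the Weyl group to get infinitely many.

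First I would produce a single seed class. Because the conditions (1)--(4) involve $K_Q \cdot D > 0$, the class must lie in the $K_Q$-positive part of the cone, which is exactly where Theorem~\ref{defernexthm} (de Fernex) lets us replace $\Effb_1(Q) \cap K_{\geq 0}$ by the positive cone $P = \set{D : D^2 \geq 0,\ D \cdot H \geq 0}$. So the task reduces to finding a class with $D^2 = 0$ (a nef class on the boundary of $P$), with $D \cdot K_Q > 0$, symmetric under the two rulings so that $D \cdot r_1 = D \cdot r_2$, and with $D \cdot \ell_{ij} > 0$ for all $i,j$. The symmetry condition (3) suggests choosing $D$ invariant under the involution swapping $r_1 \leftrightarrow r_2$; writing $D = d(r_1 + r_2) - \sum m_i f_i$ in the ruling basis handles (3) automatically, and the conditions $D^2 = 0$, $D \cdot K_Q > 0$, $D \cdot \ell_{ij} > 0$ become explicit numerical inequalities in $d$ and the $m_i$. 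Condition (1), that $D$ meets $\Effb_1(Q)$ only along its own ray, is where SHGH enters: by de Fernex, on the $K_Q$-positive boundary of $P$, a nef class with $D^2 = 0$ lies on an extremal ray of $P$, and one checks the orthogonal complement inside $\Effb_1(Q) \cap K_{\geq 0}$ is one-dimensional.

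Next I would invoke the infinitude of $W$. Having fixed one good class $D_0$, I apply Cremona transformations (elements of $W$) that preserve the full symmetry between the two rulings and fix $K_Q$. Each such $w \in W$ sends $D_0$ to a new nef class $w(D_0)$ with $w(D_0)^2 = 0$ and $K_Q \cdot w(D_0) = K_Q \cdot D_0 > 0$; conditions (1) and (3) are preserved because they are expressed through the $W$-invariant pairing and the preserved ruling symmetry. Condition (4), $w(D_0) \cdot \ell_{ij} > 0$, requires a short check: since $w(D_0)$ is nef and $w(D_0) \cdot \ell_{ij} = 0$ would force $\ell_{ij}$ into the orthogonal ray of $w(D_0)$, which by (1) is impossible unless $\ell_{ij}$ is proportional to $w(D_0)$ (it is not, as $K_Q \cdot \ell_{ij} = 0 \neq K_Q \cdot w(D_0)$), the inequality is automatic. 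Finally, the orbit $W \cdot D_0$ is infinite because its stabilizer is finite (the growing self-intersections / degrees force infinitely many distinct classes), yielding infinitely many classes satisfying all four hypotheses.

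\textbf{The main obstacle} I anticipate is pinning down condition (1) precisely—that the only effective classes orthogonal to $D$ are multiples of $D$—for the $K_Q$-positive extremal class, since this is exactly the delicate part of the effective cone that is controlled only conjecturally. This is where Theorem~\ref{defernexthm} is indispensable: it converts the statement about $\Effb_1(Q)$ into one about the concrete quadratic cone $P$, where orthogonality to a null nef class is elementary. A secondary subtlety is verifying that the seed class can be chosen with genuinely \emph{strict} inequality $D \cdot \ell_{ij} > 0$ for all the finitely many $\ell_{ij}$ simultaneously, and that symmetry under the ruling involution is compatible with a nonzero $K_Q$-degree; both are finite explicit checks once a candidate $D_0$ is written down in the ruling basis.
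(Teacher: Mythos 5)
Your overall strategy (seed class plus Cremona/Weyl orbit) is genuinely different from the paper's, which instead writes down an explicit \emph{continuous} family
\(D_\delta = h - \delta(e_0+e_1) - \delta'\sum_{j=2}^{9} e_j\) with \(\delta' = \sqrt{(1-2\delta^2)/8}\) and \(\tfrac{1}{\sqrt{10}}<\delta<\tfrac13\), and verifies the four hypotheses by direct computation (de Fernex plus the cone theorem for nefness, Cauchy--Schwarz for condition (1) on the positive cone). Unfortunately your orbit argument has two concrete gaps that would need to be repaired before it works.

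First, your verification of condition (4) for the translates \(w(D_0)\) is incorrect. You argue that \(w(D_0)\cdot \ell_{ij}=0\) is impossible "by (1) unless \(\ell_{ij}\) is proportional to \(w(D_0)\)." But condition (1) only constrains classes \(\gamma\in\Effb_1(Q)\), and \(\ell_{ij}=r_1-f_i-f_j\) is \emph{not} effective on \(Q\) (the paper notes this explicitly); it is a root with \(\ell_{ij}^2=-2\) and \(K_Q\cdot\ell_{ij}=0\), so neither condition (1) nor the Hodge index theorem excludes orthogonality to a null nef class --- indeed \(r_1\) itself is a null class with \(r_1\cdot\ell_{ij}=0\). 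Since \(w^{-1}(\ell_{ij})\) is just some root, with no positivity against \(D_0\) guaranteed, condition (4) must be checked separately for each translate, and nothing in your argument does this. Second, condition (3) is not \(W\)-invariant, so you must restrict to the stabilizer of \(r_1-r_2\) (up to sign) in \(W\) and show that the orbit of \(D_0\) under \emph{that} subgroup is infinite. Your justification --- "growing self-intersections / degrees force infinitely many distinct classes" --- cannot be right as stated, because \(W\) preserves the intersection form and every translate of \(D_0\) has self-intersection \(0\); infinitude of this constrained orbit is plausible but requires an actual argument. (Your reduction of condition (1) to the positive cone \(P\) via Theorem~\ref{defernexthm} is fine, provided you also check strict positivity of \(D_0\) against every \((-1)\)-curve, which is the same computation as nefness.) The paper sidesteps all of this by exhibiting a one-parameter family on which conditions (2), (3'), (4') are open numerical inequalities checked once and for all.
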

\begin{proof}
It is convenient to fix an identification \(Q \cong X^2_{10}\) and rewrite the hypotheses in the basis for \(N^1(Q)\) arising from this identification.
The strict transforms of the two rulings through the point \(p_1\) give disjoint \((-1)\)-curves on \(Q\), and these can be contracted.  The other \(8\) exceptional curves \(f_i\) can then be contracted to give a map to \(\P^2\).  Let \(e_0\) and \(e_1\) be the first two \((-1)\)-curves contracted, and let \(e_j = f_j\) for \(2 \leq j \leq 8\).  With respect to this new basis, we have  \(r_1 - f_1 = e_0\) and \(r_2 - f_1 = e_1\), and \(f_1 = h - e_0 - e_1\), where $h$ denotes the class of a line on $\P^2$.  

While the first two conditions in Theorem \ref{extremalpushfwd}  are independent of the basis, the last two can be rewritten as
\begin{enumerate}
\setcounter{enumi}{2}
\renewcommand{\theenumi}{\arabic{enumi}$^\prime$}
\item \(D \cdot e_0= D \cdot e_1\).
\item \(D \cdot e_0 > D \cdot e_j\) for any \(j > 1\), and \(D \cdot (h - e_1 - e_i - e_j) > 0\) for any \(i,j > 1\).
\end{enumerate}
The first part of (4) arises when \(i = 1 < j\), while the second case is when \(1 < i < j\).

Fix any \( \frac{1}{\sqrt{10}} < \delta < \frac{1}{3}\), and let \(\delta^\prime = \sqrt{ \frac{1- 2 \delta^2}{8}}\).  Observe that \(\frac{3}{10} < \frac{1}{6} \sqrt{\frac{7}{2}} < \delta^\prime < \delta\) for \(\delta\) in this range.
 Consider the divisor
\[
D_\delta = h - \delta (e_0 + e_1) - \delta^\prime \sum_{j=2}^9 e_j.
\]

We check each of the hypotheses in turn. To simplify notation, for the rest of this proof set $X = X_{10}^2$.
\begin{enumerate}[leftmargin=0in, itemindent=\dimexpr\parindent+\labelwidth+\labelsep\relax]
\item First we check that \(D_\delta\) is nef.  The cone theorem implies that 
\[
\Effb_1(X) = \Effb_1(X)_{K_X \geq 0} + \sum_i \R_{\geq 0} [C_i],
\]
where the \(C_i\) are \(K_X\)-negative curves. According to Theorem ~\ref{defernexthm},
\[
\Effb_1(X) \cap \Effb_1(X)_{K_{X} \geq 0} = P \cap K_{\geq 0}.
\]
Hence, it suffices to show that \(D_\delta \cdot C \geq 0\) if \(C\) is \(K_X\)-negative, and that \(D_\delta \cdot C \geq 0\) if \(C\) has \(C^2 \geq 0\) and \(C \cdot H > 0\).

First, suppose that \(C\) is a pseudoeffective class with \(K_X \cdot C < 0\).  We have
\[
3 D_\delta - K_X = (3\delta-1) (e_0 + e_1) + (3\delta^\prime-1) \sum_{j=2}^9 e_j,
\]
and so
\begin{align*}
3D_\delta \cdot C = K_X \cdot C + \left( (3\delta-1) (e_0 + e_1) + (3\delta^\prime-1) \sum_{j=2}^9 e_j \right) \cdot C
\end{align*}
However, since \(\delta < 1/3\), the number \(3 \delta - 1\) is negative.  It is easy to check that \(\delta^\prime < 1/3\) as well, and so the divisor on the right is a sum of exceptional divisors with negative coefficients.  If \(C\) is any curve other than one of the \(e_i\), then both terms on the right are negative.  If \(C\) is one of the curves \(e_i\), then \(D_\delta \cdot C > 0\) because \(\delta\) and \(\delta^\prime\) are both positive.

It remains to check that \(D_\delta \cdot C > 0\) if \(C\) is a class with positive self-intersection.  This follows from the Cauchy-Schwartz inequality: suppose that \(d^2 \geq \sum a_i^2\) and \(e^2 \geq \sum b_i^2\).  Then \(de \geq \sum a_i b_i\).  Moreover, equality is achieved if and only if \(C\) is a multiple of \(D_\delta\).

\item We have
\[
D_\delta \cdot K_Q = -3 + 2 \delta + 8 \delta^\prime > -3 + 10 \delta^\prime > 0,
\]
since \(\delta^\prime > 3/10\).
\setcounter{enumi}{2}
\renewcommand{\theenumi}{\arabic{enumi}$^\prime$}
\item Since \(D_\delta\) is of the form \(h - \delta e_0 - \delta e_1 - \cdots\), we have \(D \cdot e_0 = D \cdot e_1\).

\item Because \(\delta > \delta^\prime\), we have \(D \cdot e_0 > D \cdot e_j \) for any \(j > 1\).  We also have \(D \cdot (h - e_1 - e_i -e_j) = 1 - \delta - 2 \delta^\prime > 0\), since \(\delta^\prime < \delta < 1/3\).

\end{enumerate}
\end{proof}

\begin{remark}
One can even arrange that \(D_\delta\) is a rational class through judicious choice of \(\delta\).   For example,
\[
D_{\frac{226}{692}} = h - \frac{226}{692} (e_0 + e_1) - \frac{217}{692} \sum_{i=2}^9 e_i.
\]
However, in general such classes are not expected to have any effective representatives.
\end{remark}

Assuming the SHGH conjecture, we can now conclude that \(\Effb^2(X_r^n)\) is not finitely generated if \(r \geq n+6\). We need the following lemma, which guarantees that cones over extremal classes in \(\Effb_{k}(X_{r}^{n})\) are extremal in \(\Effb_{k+1}(X_{r+1}^{n+1})\).

\begin{lemma}\label{lem-cones}
Suppose that \(D = aH_k - \sum_{i=1}^r a_i E_{i,k}\) spans an extremal ray on \(\Effb_k(X_r^n)\).  Then \(CD = aH_{k+1} - aE_{0, k+1} - \sum_{i=1}^r a_i E_{i, k+1}\) spans an extremal ray on \(\Effb_{k+1}(X_{r+1}^{n+1})\).  In particular, if \(\Effb_k(X_r^n)\) has infinitely many extremal rays, then so does \(\Effb_{k+1}(X_{r+1}^{n+1})\).
\end{lemma}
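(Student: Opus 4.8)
The plan is to descend the decomposition problem from $X^{n+1}_{r+1}$ back to $X^n_r$ along the coning map $C$, exploiting the projection from the cone vertex $p_0'$. Write a class on $X^{n+1}_{r+1}$ as $\gamma = cH_{k+1} - c_0 E_{0,k+1} - \sum_{i=1}^r c_i E_{i,k+1}$, introduce the coordinate left inverse $\psi$ of $C$ defined by $\psi(\gamma) = cH_k - \sum_{i=1}^r c_i E_{i,k}$ (so $\psi \circ C = \mathrm{id}$), and consider the linear functional $\mu(\gamma) = c - c_0$. First I would check that $\mu \geq 0$ on $\Effb_{k+1}(X^{n+1}_{r+1})$ by evaluating it on the classes of irreducible subvarieties, which by Lemma \ref{lem-ab} fall into three types: those contained in $E_0$ (where $c=0$ and $c_0<0$, so $\mu = -c_0 > 0$), those contained in some $E_i$ with $i \geq 1$ (where $c=c_0=0$, so $\mu = 0$), and those contained in no exceptional divisor (where $0 \leq c_0 \leq c$ by Lemma \ref{lem-ab}, so $\mu \geq 0$). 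As these generate the cone, $\mu \geq 0$ throughout, and $\mu(\gamma)=0$ forces $c_0 = c$, i.e. $\gamma = C(\psi(\gamma))$. Since $\mu(CD)=a-a=0$, any pseudoeffective decomposition $CD = \gamma_1 + \gamma_2$ has $\mu(\gamma_1)=\mu(\gamma_2)=0$, hence $\gamma_i = C(\delta_i)$ with $\delta_i = \psi(\gamma_i)$ and $\delta_1 + \delta_2 = \psi(CD) = D$.

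The heart of the argument is the converse inclusion: if $C(\delta)$ is pseudoeffective, then $\delta$ is pseudoeffective on $X^n_r$. The obvious coordinate left inverse $\psi$ does not preserve effectivity — a non-cone $(k+1)$-fold can have $\psi$-image outside $\Effb_k(X^n_r)$ — so one must argue with honest cycles rather than numerical classes. Writing $C(\delta) = \lim_m [\eta_m]$ with $\eta_m$ effective, I would fix a very ample $A$ on $X^{n+1}_{r+1}$ and note that the degrees $A^{k+1}\cdot[\eta_m]$ converge, hence are bounded, so the components of $\eta_m$ range over finitely many Chow components with bounded coefficients. Passing to a Chow-convergent subsequence produces an honest effective cycle $\eta_\infty$ whose class is exactly $C(\delta)$, using that the numerical class is locally constant on the Chow variety (and approximating to deal with $\R$-coefficients). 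Now $\mu([\eta_\infty]) = 0$; since $\mu$ takes the value $-c_0 > 0$ on a component contained in $E_0$, the value $0$ on a component contained in some $E_i$ with $i\geq 1$, and the value $\deg - \mult_{p_0'} \geq 0$ on a component lying in no exceptional divisor — with equality precisely when that component is a cone over $p_0'$, by the classical characterization of cones by $\mult = \deg$ — vanishing of the total forces every component of $\eta_\infty$ to be either contained in some $E_i$ with $i \geq 1$ or a genuine cone over $p_0'$. Projecting each cone to its base in $\P^n$, and sending each $E_i$-component to the corresponding $k$-plane class in $E_i \subset X^n_r$, exhibits an effective cycle on $X^n_r$ of class $\delta$ (matching exactly the class computation defining $C$ in \S\ref{sec-prelim}). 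Thus $\delta$ is pseudoeffective, indeed effective.

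With both $\delta_1,\delta_2 \in \Effb_k(X^n_r)$ and $\delta_1 + \delta_2 = D$, the extremality of $D$ gives $\delta_i \in \R_{\geq 0} D$, whence $\gamma_i = C(\delta_i) \in \R_{\geq 0}\, CD$; this shows $CD$ spans an extremal ray. Since $C$ is injective and linear, it carries distinct extremal rays of $\Effb_k(X^n_r)$ to distinct extremal rays of $\Effb_{k+1}(X^{n+1}_{r+1})$, which yields the final assertion about infinitely many extremal rays. I expect the main obstacle to be the second paragraph: upgrading the limit of numerical classes to an actual limiting cycle carrying the correct class (Chow compactness, local constancy of the numerical class, and the $\R$-coefficient bookkeeping), together with invoking the fact that multiplicity equal to degree characterizes cones — the precise input that lets the vanishing of the cone-defect $\mu$ force genuine cone structure, which is exactly what the coordinate map $\psi$ fails to see.
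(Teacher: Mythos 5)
Your first and third paragraphs reproduce the paper's argument: the functional $\mu(\gamma)=c-c_0$ is nonnegative on $\Effb_{k+1}(X^{n+1}_{r+1})$ because every irreducible subvariety has degree at least its multiplicity at $p_0'$, so any pseudoeffective decomposition of $CD$ must consist of classes with $c_0=c$, and once you know such classes descend to pseudoeffective classes on $X^n_r$, extremality of $D$ finishes the proof. (One small omission: you never verify that $CD$ is pseudoeffective in the first place, which is needed for it to span a ray of the cone at all; the paper does this by taking limits of cones over effective cycles approximating $D$.)

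The genuine gap is in your second paragraph, at the step ``passing to a Chow-convergent subsequence produces an honest effective cycle $\eta_\infty$ whose class is exactly $C(\delta)$.'' A pseudoeffective class need not be effective, and your bounded-degree argument does not rule out the standard failure mode: writing $\eta_m=\sum_j c_{m,j}V_{m,j}$, the bound on $A^{k+1}\cdot[\eta_m]$ only controls $\sum_j c_{m,j}\deg V_{m,j}$, so components with $c_{m,j}\to 0$ and $\deg V_{m,j}\to\infty$ contribute bounded classes that escape every bounded Chow family and whose limit need not be effective. Since $CD$ is extremal, the classes you need to descend lie on the boundary of the cone, which is exactly where effectivity can fail. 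Moreover, your proposed rescue of using the characterization ``$\mult_{p_0'}=\deg$ implies cone'' is rigid: if you instead perturb to $C(\delta)+\epsilon H_{k+1}$ to get an honest effective representative (this class lies in the interior of the cone, so it is a positive combination of irreducible subvarieties), the cone-defect $\mu$ is now $\epsilon>0$ rather than $0$, the components are no longer exact cones, and your mechanism for producing a cycle on $X^n_r$ of class $\delta$ breaks down. The paper's proof is designed precisely to survive this perturbation: it applies Lemma \ref{lem-multiplicityline} to the effective representative $V_\epsilon$ of $C(\delta)+\epsilon H_{k+1}$ to show $V_\epsilon$ contains each line $\ell_{0j}$ with multiplicity at least $b_j-\epsilon$, then slices with a general hyperplane $L\cong\P^n$ (meeting the lines $\ell_{0j}$ in $r$ very general points) to obtain an effective $k$-cycle witnessing that $(a+\epsilon)H_k-\sum_j(b_j-\epsilon)E_{j,k}$ is effective, and lets $\epsilon\to 0$. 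You would need to replace your Chow-limit step with an argument of this robust, quantitative kind.
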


Lemma \ref{lem-cones} immediately implies the following.

\begin{corollary}\label{cor-codim2infinite}
Assume the SHGH Conjecture for the blow-up of $\P^2$ at $10$ points. Then $\Effb^2(X_{r}^n)$ is not finitely generated if $r \geq n+6$.
\end{corollary}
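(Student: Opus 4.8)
The plan is to reduce everything to the base case $n=3$, $r=9$ already prepared by the preceding results, and then propagate in two independent directions: increasing $n$ by the cone construction, and increasing $r$ by adding points. Since a codimension-$2$ cycle on $X^n_r$ has dimension $n-2$, the claim is equivalent to $\Effb_{n-2}(X^n_r)$ failing to be rational polyhedral. For the base case note $\Effb_1(X^3_9)=\Effb^2(X^3_9)$: assuming the SHGH conjecture, the theorem immediately preceding this corollary produces infinitely many classes $D_\delta$ (one for each $\delta$ in a nonempty open interval) satisfying the hypotheses of Theorem~\ref{extremalpushfwd}, and hence infinitely many extremal rays $i_\ast D_\delta$ of $\Effb_1(X^3_9)$. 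These are genuinely distinct rays: the $D_\delta$ are pairwise non-proportional, and because each $D_\delta$ has equal $e_0,e_1$-coefficients while the kernel of $i_\ast$ is spanned by $r_1-r_2=e_0-e_1$, the images $i_\ast D_\delta$ remain pairwise non-proportional. Thus $\Effb^2(X^3_9)$ has infinitely many extremal rays.

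First I would raise the dimension. The cone map $C\colon N_{k}(X^{n}_{r})\to N_{k+1}(X^{n+1}_{r+1})$ preserves codimension, since it sends a dimension-$k$, codimension-$(n-k)$ class to a dimension-$(k+1)$ class of codimension $(n+1)-(k+1)=n-k$. By Lemma~\ref{lem-cones}, if $\Effb_{k}(X^{n}_{r})$ has infinitely many extremal rays then so does $\Effb_{k+1}(X^{n+1}_{r+1})$. Applying this $n-3$ times to the base case transports the infinitely many extremal rays of $\Effb_1(X^3_9)$ to infinitely many extremal rays of $\Effb_{n-2}(X^{n}_{n+6})=\Effb^2(X^{n}_{n+6})$, for every $n\ge 3$; this settles the case $r=n+6$.

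Finally I would add points to reach all $r\ge n+6$. Let $\rho\colon X^{n}_{r}\to X^{n}_{n+6}$ be the morphism contracting the exceptional divisors over the extra $r-(n+6)$ points, and consider the linear pushforward $\rho_\ast\colon N_{n-2}(X^{n}_{r})\to N_{n-2}(X^{n}_{n+6})$. Pushforwards of effective cycles are effective, and every irreducible cycle downstairs is the $\rho_\ast$-image of its strict transform; hence $\rho_\ast$ maps effective classes to effective classes and surjects onto the classes of effective cycles of $X^{n}_{n+6}$. If $\Effb_{n-2}(X^{n}_{r})$ were rational polyhedral, then $\rho_\ast\bigl(\Effb_{n-2}(X^{n}_{r})\bigr)$ would be a rational polyhedral (in particular closed) cone containing all effective classes of $X^{n}_{n+6}$ and contained in $\Effb_{n-2}(X^{n}_{n+6})$; being closed and containing a dense subcone it would equal $\Effb_{n-2}(X^{n}_{n+6})$, forcing that cone to be rational polyhedral as well — contradicting the previous paragraph.

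I expect the last step to be the main obstacle, precisely the interchange of $\rho_\ast$ with the closure defining the pseudoeffective cone; the cleanest route is the contrapositive above, which only needs non-polyhedrality to descend and avoids tracking individual extremal rays. One subtlety worth flagging is that a pseudoeffective class $aH_{n-2}-\sum a_i E_{i,n-2}$ may have negative coefficients $a_i$ arising from components contained in the $E_i$, so a naive ``append zero coefficients and re-check extremality'' argument would be delicate; routing through the blow-down and the polyhedral-image principle sidesteps this entirely.
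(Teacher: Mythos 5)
Your proof is correct and follows the same route as the paper: the paper's entire proof of this corollary is the single sentence that Lemma~\ref{lem-cones}, applied to the infinitely many extremal rays \(i_\ast D_\delta\) of \(\Effb_1(X^3_9)\) produced by the preceding theorem, immediately gives the result --- which is exactly your base case together with your dimension-raising step. The one place you genuinely add something is the passage from \(r=n+6\) to arbitrary \(r\geq n+6\): since Lemma~\ref{lem-cones} increments \(n\) and \(r\) together, iterating from \(X^3_9\) only reaches \(\Effb_{n-2}(X^n_{n+6})\), and your contrapositive via the blow-down \(\rho\colon X^n_r\to X^n_{n+6}\) (a rational polyhedral cone has rational polyhedral, hence closed, image under \(\rho_\ast\), and that image is squeezed between the effective classes of \(X^n_{n+6}\) and their closure, forcing \(\Effb_{n-2}(X^n_{n+6})\) to be polyhedral) is a correct and genuinely needed supplement that the paper leaves implicit. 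Your check that the \(i_\ast D_\delta\) remain pairwise non-proportional --- because each \(D_\delta\) has fixed \(h\)-coefficient and equal \(e_0,e_1\)-coefficients while \(\ker i_\ast\) is spanned by \(e_0-e_1\) --- is likewise a worthwhile detail the paper omits.
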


\begin{proof}[Proof of Lemma \ref{lem-cones}]
Given $r+1$ very general points $p_0, \dots, p_r$ in $\P^{n+1}$, their projection from $p_0$ give $r$ very general points in $\P^n$. Let $D_i$ be effective cycles arbitrarily close to $D$ in $\Effb_k(X_r^n)$. Then the classes of the cones $CD_i$ over $D_i$ converge to $CD$. Hence, $CD \in \Effb_{k+1}(X_{r+1}^{n+1})$. 

Conversely, we claim that if \(CD = aH_{k+1} - aE_0 - \sum_{i=1}^r b_i E_{i, k+1}\) is a  pseudoeffective \((k+1)\)-cycle on \(X^{r+1}_{n+1}\), then \(D = aH_k -  \sum_{i=1}^r b_i E_{i, k}\) is a  pseudoeffective \(k\)-cycle on \(X^{r}_{n}\).  The class \(CD + \epsilon H_{k+1}\) is effective for any \(\epsilon > 0\). Let \(V_\epsilon\) be a (rational) cycle representing the class \(CD + \epsilon H_{k+1}\).  Let \(\ell_{0j}\) denote the strict transform on \(X_{n+1}^{r+1}\) of the line through \(p_0\) and \(p_j\). By Lemma \ref{lem-multiplicityline}, $V_{\epsilon}$ contains the line $\ell_{0j}$ with multiplicity  $\beta_j \geq b_j - \epsilon$. Let  $L \subset X^{n+1}_{r+1}$ be the proper transform of a general hyperplane in $\P^{n+1}$.  The lines $l_{0j}$ intersect $L$ in $r$ very general points $p$. The proper transform of the intersection 
 \(L \cap V_\epsilon\) gives an effective cycle with class \((a+\epsilon) H_k - \sum_{i=1}^j \beta_j E_{j,k}\).  Letting \(\epsilon\) tend to \(0\), we see that \(aH_k - \sum_{i=1}^r b_i E_{i,k}\) is pseudoeffective in $X_r^n$, as required.

Now, suppose that \(D = aH_k - \sum_{i=1}^r b_i E_{i,k}\) spans an extremal ray of \(\Effb_k(X^n_r)\).  We claim that $CD = aH_{k+1} - aE_{0, k+1} - \sum_{i=1}^r b_i E_{i, k+1}$ spans an extremal ray of \(\Effb_{k+1}(X^{n+1}_{r+1})\). Suppose \(CD = \alpha + \beta\), where \(\alpha\) and \(\beta\) are both pseudoeffective \((k+1)\)-cycles on \(X_{r+1}^{n+1}\).  Since any pseudoeffective class has \(a \geq b_0\), it must be that
\[
\alpha = cH_{k+1} - cE_{0, k+1} - \sum_{i=1}^r c_i E_{i, k+1}, \; \beta = dH_{k+1}- dE_{0, k+1} - \sum_{i=1}^r d_i E_{i, k+1}.
\]
Then
\[
\alpha_0 = cH_k - \sum_{i=1}^r c_i E_{i,k}, \; \beta_0 = dH_k - \sum_{i=1}^r e_i E_{i,k}.
\]
are pseudoeffective on $X_{r}^n$. Hence, $\alpha_0$ and $\beta_0$ are proportional to $D$. It follows that $\alpha$ and $\beta$ are proportional to $CD$ and $CD$ is extremal.
\end{proof}

There are several interesting remaining questions concerning the finite generation of cones of higher codimension.

\begin{question}
Can one show that $\Effb_{n-2}(X_r^n)$ is not finitely generated for $r \geq n+6$ independently of the SHGH Conjecture?
\end{question}

\begin{question}\label{ques-kinfinite}
Fix \(n\) and \(k\). Does there exist \(r\) for which \(\Effb_k(X^n_r)\) is not finitely generated?  How does \(r\) depend on \(n\) and \(k\)?
\end{question}

In particular, we have the following fundamental question.
\begin{question}\label{ques-1infinite}
For every $n$, does there exist $r$ for which $\Effb_1(X_r^n)$ is not finitely generated?
\end{question}

If $\Effb_1(X_r^n)$ is not finitely generated for $r \geq r_0$, then by Lemma \ref{lem-cones} $\Effb_k(X_{r+k-1}^{n+k-1})$ is not finitely generated for $r \geq r_0$. Hence, an affirmative answer to Question \ref{ques-1infinite} implies an affirmative answer to Question \ref{ques-kinfinite}.

\section{Blow-Ups at points in special position}\label{sec-special}

Until now we have considered blow-ups of $\P^n$ at linearly general or very general points.  It is also interesting to consider cones of effective cycles on blow-ups of $\P^n$ at special configurations of points.  The dependence of the cones on the position of the points can be subtle, which makes degeneration arguments difficult. We will see that the property of the effective cone being finite is neither an open nor closed condition, even in families where the vector space of numerical classes of $k$-dimensional cycles  has constant dimension.

\begin{proposition}\label{prop-linearspan}
Let $\Gamma$ be a set of $r$ points whose span is $\P^m \subset \P^n$. Let $X_{\Gamma}^n$ and $X_{\Gamma}^m$ denote the blow-up of $\P^n$ and $\P^m$ along $\Gamma$, respectively. 
\begin{enumerate}
\item Then $\Effb_k(X_{\Gamma}^n)$ is linearly generated for $m \leq k \leq n-1$ and 
\item $\Effb_k(X_{\Gamma}^n) = \Effb_k(X_{\Gamma}^m)$ for $k < m$.
\end{enumerate}
\end{proposition}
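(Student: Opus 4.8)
The plan is to handle the two parts separately, in both cases comparing an effective cycle on $X_\Gamma^n$ with one on the smaller blow-up $X_\Gamma^m$ via the geometry of the linear span $\P^m \subset \P^n$. Throughout I would use Convention~\ref{conv-terrible} to regard $N_k(X_\Gamma^n)$ and $N_k(X_\Gamma^m)$ as the same abstract vector space spanned by $H_k$ and the $E_{i,k}$, so that statements like $\Effb_k(X_\Gamma^n) = \Effb_k(X_\Gamma^m)$ make sense. Note that the span being exactly $\P^m$ means the $r$ points are contained in, and nondegenerate in, a linear $\P^m$; since $r$ points spanning a $\P^m$ are automatically in linearly general position inside $\P^m$ only when $r$ is small, but here we make no linear-generality hypothesis and instead leverage the freedom of projection.

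For part (1), where $m \leq k \leq n-1$, I would show every irreducible effective $k$-cycle $Y$ has class in the linear span. By Lemma~\ref{lem-ab} I may assume $Y$ is not contained in an exceptional divisor and write $[Y] = aH_k - \sum_i b_i E_{i,k}$ with $a \geq b_i \geq 0$. The key geometric input is that the points all lie in a $\P^m$ with $m \leq k$: a general linear subspace $\P^{n-k}$ of $\P^n$ meets $\P^m$ in dimension $m + (n-k) - n = m-k \leq 0$, hence generically in the empty set (or a single point when $m=k$). Projecting $Y$ appropriately, or intersecting with general members of the base-point-free system $|H - E_i|$, I would argue that the $b_i$ must satisfy $\sum_i b_i \leq a$ (roughly, the multiplicities at points forced into a small linear space cannot accumulate faster than the degree allows once $k \geq m$), which by Lemma~\ref{maininequality} puts $[Y]$ in the span of linear cycles. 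The cleanest route is probably to intersect $Y$ with the strict transform of a general $\P^{n-m}$ through the span, or to exhibit an explicit nef class of the form $H_k$-combination that is dual to the linear-generation condition; the first part of Lemma~\ref{maininequality} (the observation about the nefness criterion) should make this a short verification once the multiplicity bound is established.

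For part (2), where $k < m$, I would prove the two inclusions of cones separately. The inclusion $\Effb_k(X_\Gamma^m) \subseteq \Effb_k(X_\Gamma^n)$ is the easy direction: the strict transform of any effective $k$-cycle living in the $\P^m$ is an effective $k$-cycle in $X_\Gamma^n$, with the same class under the identification, since $k$-dimensional subvarieties of $\P^m$ push forward to $k$-dimensional subvarieties of $\P^n$ preserving degree and multiplicities at the $p_i$. For the reverse inclusion $\Effb_k(X_\Gamma^n) \subseteq \Effb_k(X_\Gamma^m)$, I would take an irreducible effective $k$-cycle $Y$ on $X_\Gamma^n$ and project it into $\P^m$: choosing a general linear projection $\P^n \dashrightarrow \P^m$ whose center is disjoint from $\P^m$ (hence from $\Gamma$), the image of $\pi(Y)$ is a $k$-dimensional cycle in $\P^m$ of the same degree $a$ (for general center, since $k < m$ the projection is generically finite onto its image of the same degree) and with multiplicity at each $p_i$ at least $b_i$ by the semicontinuity/upper-bound on multiplicity under projection. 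This realizes $[Y]$, or a class dominating it in the sense of Lemma~\ref{lem-ab}, as effective on $X_\Gamma^m$, and by closedness of the pseudoeffective cone the inclusion follows.

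The main obstacle I anticipate is the projection argument in part (2): I must ensure that a general projection center misses the points and keeps $Y$ of the same degree without introducing extra multiplicity at the $p_i$ that would move the class out of the cone, i.e.\ that multiplicities are \emph{preserved} and not merely bounded in the wrong direction. The subtlety is that projection can only increase multiplicity at a point lying off the center, whereas I need the projected class to have the \emph{same} $b_i$ to conclude $[Y] \in \Effb_k(X_\Gamma^m)$ rather than a class with larger multiplicities (which would be a different, possibly non-pseudoeffective-equivalent class). I would resolve this by choosing the projection center generically so that it avoids the embedded tangent cones of $Y$ at each $p_i$—guaranteeing the multiplicity at $p_i$ is unchanged—which is possible precisely because $k < m$ leaves enough room for the center to be generic; making this genericity statement precise, and checking that the degree is genuinely preserved rather than dropping under a special projection, is where the real work lies.
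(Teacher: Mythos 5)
Your part (2) is essentially the paper's argument (the paper projects from one general point at a time rather than all at once), but the ``main obstacle'' you identify there is not actually an obstacle. You do not need the projection to preserve the multiplicities $b_i$ exactly: if the projected cycle has class $aH_k - \sum_i b_i' E_{i,k}$ with $b_i' \geq b_i$, then
\[
aH_k - \sum_i b_i E_{i,k} \;=\; \Bigl(aH_k - \sum_i b_i' E_{i,k}\Bigr) + \sum_i (b_i'-b_i)E_{i,k}
\]
is a sum of effective classes on $X_\Gamma^m$, since the $E_{i,k}$ are themselves effective there when $k<m$. So the one-sided bound on multiplicities under projection, which you already have, suffices, and the genericity analysis of tangent cones that you defer as ``where the real work lies'' can be dropped entirely.

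Part (1), however, has a genuine gap: the inequality $\sum_i b_i \leq a$ that your argument hinges on is false. Take $n=3$, $m=1$, $k=2$, and $r$ collinear points: the strict transform of any plane containing the line spanned by $\Gamma$ has class $H_2 - \sum_{i=1}^r E_{i,2}$, so $a=1$ while $\sum_i b_i = r$. The heuristic that ``multiplicities cannot accumulate faster than the degree'' fails precisely because all the points sit inside a single $k$-plane, and Lemma~\ref{maininequality} is the wrong tool here --- it encodes the constraint that a linear $k$-cycle can pass through only a bounded number of points, which is exactly what is \emph{not} true for these special configurations. The correct argument is much simpler and goes in the opposite direction: since $m \leq k$, there is a $k$-plane $L$ containing all of $\Gamma$, whose strict transform has class $H_k - \sum_{i=1}^r E_{i,k}$ and is itself one of the allowed linear generators. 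Then any effective class $aH_k - \sum_i b_i E_{i,k}$ with $a \geq b_i \geq 0$ (Lemma~\ref{lem-ab}) decomposes as $a\bigl[\overline{L}\bigr] + \sum_i (a-b_i)E_{i,k}$, a nonnegative combination of linear cycles, with no further inequality needed.
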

\begin{proof}
Since $X_{\Gamma}^m$ embeds in $X_{\Gamma}^n$ as the proper transform of the $\P^m$ spanned by $\Gamma$, any effective cycle $Z \subset X_{\Gamma}^m$ is also an effective cycle in $X_{\Gamma}^n$ with the same class. Hence, $\Effb_k(X_{\Gamma}^m) \subseteq \Effb_k(X_{\Gamma}^n)$ for $k< m$. Conversely, suppose that $k<m$. Let $Z$ be an effective cycle in $\P^n$ of dimension $k$. We may assume that $Z$ is not contained in an exceptional divisor. Choose a general point $p$. Let $q_i$ denote the projection of $p_i$ form $p$ and let $Z'$ be the projection of $Z$ from $p$. Then $Z'$ and $Z$ have the same degree and the multiplicities of $Z'$ at $q_i$ are greater than or equal to the multiplicities of $Z$  at $p_i$. Repeatedly projecting $Z$ to $\P^m$ from general points, we obtain an effective cycle contained in $\P^m$ with the same class. Taking closures, we obtain the reverse inclusion $\Effb_k(X_{\Gamma}^n) \subseteq \Effb_k(X_{\Gamma}^m)$. 

If $k \geq m$, let $L$ be a $k$-dimensional linear space containing $\Gamma$. Then the proper transform $\overline{L}$ of $L$ has class $H_k - \sum_{i=1}^r E_{i,k}$. Since a $k$-dimensional  variety  not contained in an exceptional divisor has class $aH_k - \sum_{i=1}^r b_i E_{i,k}$ with $a \geq b_i \geq 0$, we conclude that any $k$-dimensional effective cycle is a nonnegative linear combination of $[\overline{L}]$ and $E_{i,k}$, $1\leq i \leq r$. 
\end{proof}

By taking $m=1$, we obtain the following corollary.
\begin{corollary}
Suppose $\Gamma$ is a set of $r$ collinear points in $\P^n$. Then $\Effb_k(X_{\Gamma}^n)$ is linearly generated for every $1 \leq k \leq n-1$.
\end{corollary}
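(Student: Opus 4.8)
The plan is simply to specialize Proposition~\ref{prop-linearspan} to the case $m = 1$. Since $\Gamma$ consists of $r$ collinear points, their linear span is a line $\P^1 \subset \P^n$, so we are in the situation of that proposition with $m = 1$. The range $1 \leq k \leq n-1$ in the corollary coincides exactly with the range $m \leq k \leq n-1$ appearing in part~(1) of the proposition (because $m = 1$), so no case with $k < m$ ever arises and part~(1) applies to every relevant $k$ at once.

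To recall the mechanism, fix $k$ with $1 \leq k \leq n-1$. Because the points are collinear and $k \geq 1$, there is a $k$-dimensional linear subspace $L \subset \P^n$ containing all of $\Gamma$; its strict transform $\overline{L}$ has class $H_k - \sum_{i=1}^r E_{i,k}$. By Lemma~\ref{lem-ab}, any irreducible $k$-dimensional subvariety not contained in an exceptional divisor has class $aH_k - \sum_{i=1}^r b_i E_{i,k}$ with $a \geq b_i \geq 0$, and such a class is manifestly a nonnegative combination of $[\overline{L}]$ and the exceptional classes $E_{i,k}$; subvarieties contained in some $E_i$ contribute only positive multiples of $E_{i,k}$. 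Hence $\Effb_k(X_{\Gamma}^n)$ is linearly generated.

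There is essentially no obstacle here: the content lives entirely in Proposition~\ref{prop-linearspan}. The only point worth flagging is that collinearity forces $m = 1$, the smallest possible value of the span dimension, so the entire range $1 \leq k \leq n-1$ falls into the ``easy'' regime $k \geq m$, where a single $k$-plane through all of $\Gamma$ realizes the class $H_k - \sum_{i=1}^r E_{i,k}$. One therefore never needs the more delicate projection argument of part~(2), which handles the complementary range $k < m$ by reducing to the blow-up $X_{\Gamma}^m$.
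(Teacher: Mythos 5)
Your proposal is correct and matches the paper exactly: the paper derives this corollary by simply setting $m=1$ in Proposition~\ref{prop-linearspan}, which is precisely your argument. Your additional recap of the mechanism (the $k$-plane through $\Gamma$ with class $H_k - \sum_{i=1}^r E_{i,k}$ together with Lemma~\ref{lem-ab}) is just the content of part (1) of that proposition, so nothing new is needed.
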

\begin{remark}
It was shown in \cite{Ottem} that the blow-up of $\P^2$ in collinear points is a Mori Dream Space (and indeed its Cox ring can be computed)2.  Consequently, the cone of curves and divisors are finite polyhedral. The previous corollary generalizes this to all cycles. 
\end{remark}

Let $L \cong \P^{n-1}$ be a hyperplane in $\P^n$. Let $\Gamma' \subset L$ be a set of points $p_1, \dots, p_r$ and let $p_0 \in \P^n$ be a point not contained in $L$. Let $\Gamma = \Gamma' \cup \{p_0\}$. Let $X_{\Gamma}^n$ and $X_{\Gamma'}^{n-1}$ denote the blow-up of $\P^n$ and $\P^{n-1}$ along $\Gamma$ and $\Gamma'$, respectively. Taking cones with vertex at $p_0$, we generate a subcone $\CEffb_k(X_{\Gamma'}^{n-1}) \subset \Effb_{k+1}(X_{\Gamma}^n).$  

\begin{proposition}\label{prop-addpoint}
 The cone $\Effb_{k+1}(X_{\Gamma}^n)$ is generated by $\CEffb_k(X_{\Gamma'}^{n-1})$,  $\Effb_{k+1}(X_{\Gamma'}^{n-1})$ and $E_{0, k+1}$. Furthermore, the extremal rays of $\CEffb_k(X_{\Gamma'}^{n-1})$ are also extremal in $\Effb_{k+1}(X_{\Gamma}^n)$.
\end{proposition}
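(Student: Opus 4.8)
The plan is to establish the two assertions in turn, and for the generation statement I would classify the class of an arbitrary irreducible effective $(k+1)$-cycle $W$ on $X_\Gamma^n$ according to the position of $W$ relative to the exceptional divisors and to $\overline{L}$ (the proper transform of the hyperplane $L$, which is isomorphic to $X_{\Gamma'}^{n-1}$). If $W \subseteq E_0$, then $[W]$ is a nonnegative multiple of $E_{0,k+1}$. If $W \subseteq E_i$ with $i \geq 1$, then $[W]$ is a multiple of $E_{i,k+1} = C(E_{i,k})$, which lies in $\CEffb_k(X_{\Gamma'}^{n-1})$ because $E_{i,k} \in \Effb_k(X_{\Gamma'}^{n-1})$. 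If $W \subseteq \overline{L}$, then, under the identification $\overline{L} \cong X_{\Gamma'}^{n-1}$, the class $[W]$ lies in $\Effb_{k+1}(X_{\Gamma'}^{n-1})$ by definition. This already reduces everything to cycles that are ``transverse'' to the whole configuration.

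The heart of the generation statement is the remaining case, where $W$ is contained neither in an exceptional divisor nor in $\overline{L}$; here a single intersection computation does all the work. By Lemma~\ref{lem-ab} we may write $[W] = aH_{k+1} - \sum_{j=0}^r b_j E_{j,k+1}$ with $a \geq b_j \geq 0$. Since $\overline{L}$ has divisor class $H - \sum_{i=1}^r E_i$ and $W \not\subseteq \overline{L}$, the cycle $Z = W \cap \overline{L}$ is an effective $k$-cycle on $\overline{L} \cong X_{\Gamma'}^{n-1}$, and intersecting termwise (using $E_i \cdot E_{i,k+1} = -E_{i,k}$) gives $[Z] = aH_k - \sum_{i=1}^r b_i E_{i,k} \in \Effb_k(X_{\Gamma'}^{n-1})$. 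Applying the cone map and comparing coefficients yields $[W] - C[Z] = (a-b_0)E_{0,k+1}$ with $a - b_0 \geq 0$, so that $[W] = C[Z] + (a-b_0)E_{0,k+1} \in \CEffb_k(X_{\Gamma'}^{n-1}) + \R_{\geq 0}\,E_{0,k+1}$. As each of the three cones is contained in $\Effb_{k+1}(X_\Gamma^n)$, passing to closed convex cones gives the generation statement.

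For the extremality assertion I would adapt the proof of Lemma~\ref{lem-cones}. Let $D$ span an extremal ray of $\CEffb_k(X_{\Gamma'}^{n-1})$; since $C$ is an injective linear map, $D = CD_0$ where $D_0 = aH_k - \sum a_i E_{i,k}$ spans an extremal ray of $\Effb_k(X_{\Gamma'}^{n-1})$. Suppose $D = \alpha + \beta$ with $\alpha,\beta$ pseudoeffective, and write $\alpha = cH_{k+1} - c_0 E_{0,k+1} - \sum c_i E_{i,k+1}$ and similarly for $\beta$. Because $D$ has equal $H$- and $E_0$-coefficients, while every pseudoeffective class has $H$-coefficient at least its $E_0$-coefficient (the divisors $H - E_j$ being nef), the equalities $c+d = a = c_0 + d_0$ together with $c \geq c_0$, $d \geq d_0$ force $c = c_0$ and $d = d_0$. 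Hence $\alpha = C\alpha_0$ and $\beta = C\beta_0$ for classes $\alpha_0,\beta_0$ on $X_{\Gamma'}^{n-1}$ with $\alpha_0 + \beta_0 = D_0$. Granting that $\alpha_0,\beta_0$ are pseudoeffective, extremality of $D_0$ gives $\alpha_0,\beta_0 \in \R_{\geq 0}D_0$, whence $\alpha,\beta \in \R_{\geq 0}D$, so $D$ is extremal.

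The step I expect to be the main obstacle is precisely the converse descent: that $C\alpha_0$ pseudoeffective on $X_\Gamma^n$ forces $\alpha_0$ pseudoeffective on $X_{\Gamma'}^{n-1}$. Writing $C\alpha_0 = cH_{k+1} - cE_{0,k+1} - \sum c_i E_{i,k+1}$, I would represent $C\alpha_0 + \epsilon H_{k+1}$ by an effective cycle $V_\epsilon$; Lemma~\ref{lem-multiplicityline} then shows $V_\epsilon$ contains each line $\ell_{0j}$ through $p_0,p_j$ with multiplicity at least $c_j - \epsilon$. The subtlety is that intersecting with the special hyperplane $\overline{L}$ is not proper, since components of $V_\epsilon$ may lie in $\overline{L}$; I would instead intersect with a \emph{general} hyperplane $M$, so that $V_\epsilon \cap M$ is genuinely an effective $k$-cycle of degree $c+\epsilon$ with multiplicity at least $c_j - \epsilon$ at $q_j := \ell_{0j} \cap M$. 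Since $p_0 \notin M$, projection from $p_0$ is a projective isomorphism $L \to M$ carrying $\Gamma'$ to $\{q_j\}$, inducing $X_{\Gamma'}^{n-1} \cong X_{\{q_j\}}^{n-1}$; under this identification the effective class $[V_\epsilon \cap M]$ exceeds $(c+\epsilon)H_k - \sum (c_j-\epsilon)E_{j,k}$ by a nonnegative combination of the $E_{j,k}$, so the latter is pseudoeffective. Letting $\epsilon \to 0$ shows $\alpha_0 = cH_k - \sum c_i E_{i,k}$ is pseudoeffective, and it is the use of a general hyperplane together with this perspectivity isomorphism that makes the argument go through even though the points $p_i$ lie in the special hyperplane $L$.
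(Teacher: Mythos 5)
Your proof is correct. The generation half coincides with the paper's argument: reduce to an irreducible $W$ not in an exceptional divisor, intersect with $\overline{L}$, and observe that $[W]=C[W\cap\overline{L}]+(a-b_0)E_{0,k+1}$ with $a\geq b_0$. The extremality half also opens exactly as in the paper — the inequality $a\geq b_0$ on all of $\Effb_{k+1}(X_{\Gamma}^n)$ together with $a=b_0$ on $\CEffb_k(X_{\Gamma'}^{n-1})$ forces both summands into the image of $C$ — but you finish the descent differently. The paper perturbs $\alpha$ by $\epsilon H_{k+1}$, reuses the decomposition from the generation step, notes that the components lying in $\overline{L}$ have $E_{0}$-coefficient $0$ and hence total class bounded by $\epsilon$, replaces the remaining components by cones over their sections with $\overline{L}$, and lets $\epsilon\to 0$; no multiplicity lemma is needed. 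You instead prove the equivalent statement that $C\alpha_0$ pseudoeffective implies $\alpha_0$ pseudoeffective by importing the machinery of Lemma~\ref{lem-cones}: Lemma~\ref{lem-multiplicityline} forces multiplicity along the lines $\ell_{0j}$, you slice with a \emph{general} hyperplane $M$ to keep the intersection proper, and the perspectivity from $p_0$ identifies $(M,\set{q_j})$ with $(L,\Gamma')$ so that the resulting effective class lives on $X_{\Gamma'}^{n-1}$ itself — a nice observation that is exactly what makes the Lemma~\ref{lem-cones} argument usable for this special configuration. Two small points to tighten: Lemma~\ref{lem-multiplicityline} is stated for irreducible cycles, so it must be applied to each component of $V_\epsilon$ and the bounds summed (using $\max(0,x)\geq x$ and handling components inside exceptional divisors, exactly as implicitly done in Lemma~\ref{lem-cones}); and some coefficients $c_j$ may be negative for a pseudoeffective class, in which case the multiplicity bound is vacuous but the limiting class is still manifestly pseudoeffective. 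Neither affects the validity of the argument.
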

\begin{proof}
Let $Z= aH_{k+1} - \sum_{i=0}^{r} b_i E_{i, k+1}$ be an irreducible $(k+1)$-dimensional variety in $X_{\Gamma}^n$. We may assume that $Z$ is not contained in any exceptional divisors. Otherwise, its class is a positive multiple of $E_{i, k+1}$. The proper transform of $L$ in $X_{\Gamma}^n$ is isomorphic to $X_{\Gamma '}^{n-1}$. If $Z$ is contained in $X_{\Gamma'}^{n-1}$, then the class of $Z$ is in $\Effb_{k+1}(X_{\Gamma'}^{n-1})$. Otherwise, $Z \cap X_{\Gamma'}^{n-1}$ is an effective $k$ cycle with class $\alpha = aH_{k} - \sum_{i=1}^{r} b_i E_{i, k}$. Then the cone $C(\alpha)$ is an effective class in $X_{\Gamma}^n$ and since $b_0 \leq a$, $[Z]$ is in the span of  $E_{0, k+1}$ and $C(\alpha)$. 

Let $Z$ be a cycle that generates an extremal ray of $\CEffb_k(X_{\Gamma'}^{n-1})$. Suppose $[Z]= \alpha + \beta$ in $\Effb_{k+1}(X_{\Gamma}^n)$. Since $b_0 \leq a$ holds on $\Effb_{k+1}(X_{\Gamma}^n)$ and $b_0=a$ on $\CEffb_k(X_{\Gamma'}^{n-1})$, we must have that both $\alpha$ and $\beta$ satisfy $b_0 = a$. We can perturb $\alpha$ and $\beta$ by $\epsilon H_{k+1}$ to obtain rational effective classes. Since the coefficient of $E_{0,k+1}$ of any class contained in $\Effb_{k+1}(X_{\Gamma'}^{n-1})$ is $0$, the coefficients of any component of the class contained in $\Effb_{k+1}(X_{\Gamma'}^{n-1})$ are bounded by $\epsilon$.  Taking cones over the classes of the hyperplane sections of the remaining subvarieties and letting $\epsilon$ tend to zero, we see that both $\alpha$ and $\beta$ are contained in $\CEffb_k(X_{\Gamma'}^{n-1})$. By the extremality of $Z$, we conclude that they are both proportional to $[Z]$.
\end{proof}

\begin{corollary}\label{cor-notfinite}
Let $\Gamma$ be a set of points $\{q_1, \dots, q_9, p_1, \dots, p_{s-1}\}$ such that $q_1, \dots, q_9$ are general points in a plane $P \subset \P^n$ and $p_1, \dots, p_{s-1}$ are linearly general points with span disjoint from $P$. Then $\Effb_k(X_{\Gamma}^n)$ is not finitely generated for $k \leq s$ and linearly generated for $k>s$. 
\end{corollary}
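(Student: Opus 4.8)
The plan is to split off the two ranges of cycle dimension using Proposition~\ref{prop-linearspan}, and then to establish non-finite generation in the remaining range by stripping away the points $p_1,\dots,p_{s-1}$ one at a time with Proposition~\ref{prop-addpoint}, bottoming out at the cone of curves of a plane blown up at nine points. First I would compute the span of $\Gamma$: the plane $P=\P^2$ and the linear span $\langle p_1,\dots,p_{s-1}\rangle=\P^{s-2}$ are disjoint, so the join $\langle\Gamma\rangle$ has dimension $2+(s-2)+1=s+1$. Proposition~\ref{prop-linearspan}(1), applied with $m=s+1$, gives that $\Effb_k(X_\Gamma^n)$ is linearly generated for $s+1\le k\le n-1$, i.e.\ for $k>s$; this settles the second assertion. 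Proposition~\ref{prop-linearspan}(2) gives $\Effb_k(X_\Gamma^n)=\Effb_k(X_\Gamma^{s+1})$ for $k<s+1$, so for the first assertion I may assume $n=s+1$ and must prove that $\Effb_k(X_\Gamma^{s+1})$ is not finitely generated for $1\le k\le s$.

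For the inductive engine, order the points so that $L=\langle q_1,\dots,q_9,p_1,\dots,p_{s-2}\rangle$ is a hyperplane $\P^s\subset\P^{s+1}$ containing $\Gamma'=\Gamma\setminus\{p_{s-1}\}$, with $p_{s-1}\notin L$; such an $L$ exists because the $p_i$ are linearly general off $P$, and $\Gamma'$ again consists of nine general coplanar points together with external points. Proposition~\ref{prop-addpoint} shows that the extremal rays of $\CEffb_{k-1}(X_{\Gamma'}^{s})$ are extremal in $\Effb_k(X_\Gamma^{s+1})$. Since the cone map $C$ is linear and injective (the isomorphism onto its image underlying Lemma~\ref{lem-cones}), it carries distinct extremal rays of $\Effb_{k-1}(X_{\Gamma'}^{s})$ to distinct extremal rays of $\CEffb_{k-1}(X_{\Gamma'}^{s})$. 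Hence if $\Effb_{k-1}(X_{\Gamma'}^{s})$ is not finitely generated, neither is $\Effb_k(X_\Gamma^{s+1})$. As $\Gamma'$ has the same shape as $\Gamma$ with $s$ replaced by $s-1$, this reduces the pair $(k,s)$ to $(k-1,s-1)$; iterating $k-1$ times leaves the base case $k=1$ for a configuration of nine general coplanar points together with $s-k$ external points in $\P^{s-k+2}$.

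The crux is this base case $k=1$: I must show that the $(-1)$-curves of $P$ remain extremal once the external points are included. There are infinitely many such curves, since $\Effb_1(X_9^2)$ is not finitely generated by Theorem~\ref{divisorcase}. Write $V\subset N_1(X_\Gamma^{s+1})$ for the span of $H_1$ and the $E_{q_i,1}$. If $B\subset P$ is such a curve and $[B]=\alpha+\beta$ with $\alpha,\beta$ pseudoeffective, then because effective $1$-cycles have nonnegative coefficients along each $E_{p_l,1}$ while those of $[B]$ vanish, we get $\alpha,\beta\in V$. The statement I need is the identity $\Effb_1(X_\Gamma^{s+1})\cap V=\Effb_1(P)$; granting it, the extremality of $[B]$ on the surface $P$, where $B^2=-1$ forces $[B]$ onto an extremal ray, finishes the argument. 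To prove the identity I would project from the external points one at a time: projection from $p_l$ restricts to the identity on $P$, preserves the degree of a curve avoiding $p_l$, and can only raise multiplicities at the $q_i$, so a pseudoeffective class in $V$ pushes forward to a pseudoeffective class on the next hyperplane that dominates it by a nonnegative combination of the $E_{q_i,1}$, and after $s-1$ steps descends into $\Effb_1(P)$.

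I expect this descent to be the main obstacle. The difficulty is that when the approximating effective cycles of a class in $V$ are projected from an external point $p_l$, components meeting $p_l$ can drop in degree or be contracted, so pseudoeffectivity is not manifestly preserved. The resolution is that the total multiplicity along $E_{p_l,1}$ tends to $0$ in the limit defining the class, so such components contribute negligibly; turning this into a rigorous limiting statement—via a flat-family argument in the spirit of Corollary~\ref{cor-semicont}, or by resolving the projection on $\Bl_{p_l}\P^{\bullet}$ and passing to proper transforms—is where the real work lies.
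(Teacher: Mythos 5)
Your overall architecture is the same as the paper's: Proposition~\ref{prop-linearspan} disposes of $k>s$ and reduces to the span $\P^{s+1}$, and Proposition~\ref{prop-addpoint} is applied $k-1$ times to peel off external points, so that everything rests on a base case in dimension one. You are right that when $k<s$ this base case is $\Effb_1$ of a configuration that still contains $s-k$ external points, and that this is the crux (the paper's three-sentence proof does not address it). The problem is that your argument for the base case fails at its first step. Writing curve classes as $aH_1-\sum b_iE_{q_i,1}-\sum c_lE_{p_l,1}$, it is not true that effective $1$-cycles have a fixed sign of coefficient along $E_{p_l,1}$: an irreducible curve through $p_l$ has $c_l>0$, but a line contained in the exceptional divisor $E_{p_l}$ has class $E_{p_l,1}$, i.e.\ $c_l=-1$. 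So from $[B]=\alpha+\beta$ with the $E_{p_l,1}$-coordinates of $[B]$ equal to zero you cannot conclude $\alpha,\beta\in V$: a priori $\alpha$ could be a positive multiple of $E_{p_l,1}$ plus a pseudoeffective class with positive multiplicity at $p_l$, and excluding this requires precisely the kind of non-existence statement for curves with prescribed multiplicities that you have not supplied. On top of this, you explicitly leave the identity $\Effb_1(X_\Gamma^{s+1})\cap V=i_*\Effb_1(\Bl_9 P)$ unproven, and the projection you sketch does not lift to a morphism of the relevant blow-ups, so the descent is not routine. As written, the proof of the first assertion is incomplete for every $k<s$.

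If you only need the stated conclusion (non-finite generation) rather than extremality of the specific classes in the full configuration, there is a cheaper way out that matches what the paper actually proves. Let $\Gamma_0=\{q_1,\dots,q_9,p_1,\dots,p_{k-1}\}$. Your induction, applied to $\Gamma_0$, terminates honestly at $\Effb_1$ of $\Bl_9\P^2$ and shows that $\Effb_k(X_{\Gamma_0}^{k+1})=\Effb_k(X_{\Gamma_0}^{s+1})$ has infinitely many extremal rays. The pushforward along $X_\Gamma^{s+1}\to X_{\Gamma_0}^{s+1}$ is a linear map carrying $\Effb_k(X_\Gamma^{s+1})$ into $\Effb_k(X_{\Gamma_0}^{s+1})$, and every effective class downstairs is the image of an effective class upstairs (take proper transforms). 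A polyhedral cone has polyhedral, hence closed, image under a linear map, so if $\Effb_k(X_\Gamma^{s+1})$ were rational polyhedral then $\Effb_k(X_{\Gamma_0}^{s+1})$ would be polyhedral, a contradiction. Either adopt this pushforward argument or prove the base case with external points directly; the latter is genuinely more delicate than your sketch suggests.
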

\begin{proof}
When $r \geq 9$, the blow-up of $\P^2$ at $r$ general points has infinitely many $(-1)$-curves, which span extremal rays of the effective cone of curves. Applying Proposition \ref{prop-addpoint} $(k-1)$-times, the cones over the classes of $(-1)$-curves with vertex $p_1, \dots, p_{k-1}$ provide infinitely extremal rays of $\Effb_k(X_{\Gamma}^n)$ for  $k \leq s$. The linear generation of $\Effb_k(X_{\Gamma}^n)$ for $k> s$ follows from Proposition \ref{prop-linearspan}
\end{proof}

\begin{corollary}\label{cor-closed}
\begin{enumerate}
\item Linear generation of $\Effb_k(X_{\Gamma}^n)$ is not closed in smooth families.
\item Finite generation of $\Effb_k(X_{\Gamma}^n)$ is not closed in smooth families.
\end{enumerate}
\end{corollary}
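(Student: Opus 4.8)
The plan is to exploit the semicontinuity of the effective cone under specialization (Corollary \ref{cor-semicont}) together with the special configurations supplied by Corollary \ref{cor-notfinite}, whose cone of $k$-cycles fails to be finitely generated. The idea is to degenerate a very general configuration---whose cone is linearly generated, hence finitely generated, for small $r$---to such a special configuration, and to observe that the ``good'' property drops in the limit. Since the cone spanned by linear classes is always rational polyhedral, linear generation implies finite generation, so a single family will witness the failure of closedness for \emph{both} properties at once.

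First I would fix the numerical parameters. Take $k = s$, $n = s+4$, and $r = s+8$. On the one hand, Corollary \ref{cor-notfinite} produces a configuration $\Gamma_0$ of these $r$ points---nine general points in a plane $P \subset \P^n$ together with $s-1$ linearly general points spanning a subspace disjoint from $P$ (which fits, as $s+1 \le n$)---for which $\Effb_k(X_{\Gamma_0}^n)$ is not finitely generated, since $k = s \le s$. On the other hand, for a very general configuration the bound $r \le 2n-k+1$ of Theorem \ref{thm-verygenerallingen} reads $s+8 \le 2(s+4)-s+1 = s+9$ and is satisfied, so $\Effb_k(X_r^n)$ is linearly generated, and in particular finitely generated. (For $k=1$ one has the far more generous bound $r \le 2^n$ of Proposition \ref{prop-verygeneralCurve}, so the phenomenon already appears in low dimension.)

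Next I would assemble the family. I would choose a smooth one-parameter family of configurations $\{\Gamma_t\}_{t \in T}$ over a smooth curve $T$, with $\Gamma_0$ the special configuration above and the general member $\Gamma_t$ a very general configuration of $r$ points in $\P^n$; this is possible because $\Gamma_0$ is a limit of general configurations and a generic arc through it is not contained in any of the countably many proper closed loci defining the very general condition. The blow-ups $X_{\Gamma_t}^n$ then form a smooth family over $T$, and by the preceding paragraph $\Effb_k(X_{\Gamma_t}^n)$ is linearly generated (hence finitely generated) for very general $t$, while at $t=0$ it is neither. Since the very general members accumulate at $t=0$, the locus of parameters at which linear generation (respectively finite generation) holds contains points arbitrarily close to $0$ yet omits $0$; its closure strictly contains it, so neither property is closed. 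This establishes both parts of the corollary.

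The main obstacle I anticipate is not the cone computations---these are handed to us by Corollaries \ref{cor-semicont} and \ref{cor-notfinite} and Theorem \ref{thm-verygenerallingen}---but the bookkeeping needed to certify that the general member of the arc genuinely lies in the very general, linearly generated locus. One must verify that the chosen arc through $\Gamma_0$ meets each of the countably many exceptional loci only in a proper closed subset of $T$, so that very general $t$ still accumulate at $0$; equivalently, one must ensure the family is not accidentally trapped in a special stratum on which linear generation already fails away from $t=0$. Choosing $\Gamma_t$ as a generic deformation of $\Gamma_0$ resolves this, and Corollary \ref{cor-semicont} makes precise the sense in which the cone can only jump \emph{upward} at the special fiber, which is exactly the direction that breaks closedness.
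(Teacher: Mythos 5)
Your argument is correct and is essentially the paper's own proof: both degenerate a configuration with linearly generated cone to the special configuration of Corollary \ref{cor-notfinite} (nine coplanar points plus $k-1$ points spanning a disjoint subspace) and use density of the good parameters to contradict closedness. The only difference is cosmetic --- the paper takes $n\geq k+8$ so that the general fiber is handled by the toric Lemma \ref{toriccase}, whereas you take $n=k+4$ and invoke Theorem \ref{thm-verygenerallingen}; both choices of parameters work.
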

\begin{proof}
Let $n \geq k+8$. Take a  general smooth curve $B$ in $(\P^n)^{k+8}$ which avoids all the diagonals and contains a point $0\in B$ where $9$ of the points are general points in a plane $P$ and the remaining points are in linearly general position with span not intersecting $P$. Such curves exist by Bertini's Theorem since the diagonals have codimension $n \geq 2$. Consider the family $\mathcal{X} \rightarrow B$, where $\mathcal{X}_b$ is the blow-up of $\P^n$ in the $k+8$ points $\Gamma_b$ parameterized by $b \in B$. If the points in $\Gamma_b$ are in linearly general position, then by Lemma \ref{toriccase} the cone $\Effb_k(X_{\Gamma_b})$ is linearly generated. In particular, the cone is finite. However, by Corollary \ref{cor-notfinite}, $\Effb_k(X_{\Gamma_0})$ is not finitely generated. In particular, the cone is not linearly generated.
\end{proof}

\begin{corollary}\label{cor-open}
\begin{enumerate}
\item Linear generation of $\Effb_k(X_{\Gamma}^n)$ is not open in smooth families.
\item Finite generation of $\Effb_k(X_{\Gamma}^n)$ is not open in smooth families.
\end{enumerate}
\end{corollary}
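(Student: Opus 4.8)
The plan is to construct, for each $k$ and all sufficiently large $n$, a smooth family of blow-ups whose special fibre has a linearly (hence finitely) generated cone of $k$-cycles while its very general fibres do not. This reverses the jumping exhibited in Corollary~\ref{cor-closed}, where the generic fibre was good and a special fibre bad. Since specialization can only enlarge the effective cone (Corollary~\ref{cor-semicont}), the natural choice is a family whose generic member is ``as bad as possible'' and whose special member is collinear.

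First I would set $r = k+8$ and let $Z \subseteq (\P^n)^r$ be the locus of configurations $(p_1,\ldots,p_r)$ for which $p_1,\ldots,p_9$ lie on a common plane $P \cong \P^2$; it is irreducible, being the image of the obvious incidence variety over $\mathbb{G}(2,n)$. A very general member of $Z$ consists of nine very general points of a plane $P$ together with $r-9 = k-1$ very general points whose span is disjoint from $P$ (here one uses $n$ large, say $n \geq k+1$), so by Corollary~\ref{cor-notfinite}, applied with $s = r-8 = k$, its blow-up has $\Effb_k$ \emph{not} finitely generated, and a fortiori not linearly generated.

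Next I would pick a general smooth curve $B \subseteq Z$ passing through the configuration $\Gamma_0$ whose $r$ points are distinct and collinear; note $\Gamma_0 \in Z$, since collinear points are in particular coplanar. As in Corollary~\ref{cor-closed}, $B$ can be chosen to avoid the diagonals, which have codimension at least $2$ in $Z$, so the family $\mathcal{X} \to B$ of blow-ups is smooth. Because the points of $\Gamma_0$ span a line, Proposition~\ref{prop-linearspan}(1) (with $m=1$) shows that $\Effb_k(\mathcal{X}_0)$ is linearly generated, hence finitely generated; thus $0 \in B$ is a ``good'' point for both notions.

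It remains to see that this good point is accumulated by bad members, and this is the one genuinely delicate step. The configurations of $Z$ to which Corollary~\ref{cor-notfinite} fails to apply --- where the nine coplanar points become torsion on their cubic, or the remaining points degenerate --- form a countable union of proper subvarieties of $Z$; a general curve through $\Gamma_0$ is contained in none of them, and so meets each in only finitely many points. Hence the very general points of $B$ are very general points of $Z$, form a dense subset of $B$, and carry non-finitely-generated (so non-linearly-generated) cones. Since these accumulate at $0$, neither linear nor finite generation of $\Effb_k$ is open on $B$, proving both parts. The crux is exactly this ``very general member'' argument: unlike in Corollary~\ref{cor-closed}, where the generic good behaviour was cut out by an open condition, here the generic \emph{bad} behaviour holds only on a very general locus, and one must verify that $B$ inherits it while still passing through the special collinear configuration $\Gamma_0$. (For divisors, $k = n-1$, one can bypass $Z$ entirely: take $B$ general in all of $(\P^n)^r$ through $\Gamma_0$ and invoke Theorem~\ref{divisorcase} for generic non-finite-generation, but for general $k$ the restriction to $Z$ together with Corollary~\ref{cor-notfinite} appears to be needed.)
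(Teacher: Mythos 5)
Your proposal is correct and follows essentially the same route as the paper: degenerate to a collinear configuration (handled by Proposition~\ref{prop-linearspan}) inside a family whose general member satisfies the hypotheses of Corollary~\ref{cor-notfinite}. The only difference is that the paper fixes the plane $P$ and the $k-1$ outside points and lets only the nine points of $P$ become collinear, so its curve $B$ lives in the smooth parameter space $(\P^2)^9$ rather than in your incidence variety $Z$ (which is singular along the deeper collinear stratum, a wrinkle your version would need to smooth over); otherwise the two arguments coincide.
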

\begin{proof}
Let $B$ be a smooth curve parameterizing $9$ general points in a plane $P$ becoming collinear at $0 \in B$. Let $\Gamma'$ be $k-1$ points in general linear position in $\P^n$ whose span is disjoint from $P$. Let $\Gamma_b$ be the union of $\Gamma'$ and the points parameterized by $b$. Consider the family $\mathcal{X} \rightarrow B$ obtained by blowing up $\P^n$ along $\Gamma_b$. When the points are collinear, $\Effb_k(X_{\Gamma_0})$ is linearly generated. However, for the general point of $B$, $\Effb_k(X_{\Gamma_b})$ is not finitely generated by Corollary \ref{cor-notfinite}.   
\end{proof}

\begin{remark}
Corollary \ref{cor-open} is well-known for cones of divisors. For example, Castravet-Tevelev \cite{CastravetTevelev} prove that the blow-up of $\P^n$ at points on a rational normal curve is a Mori Dream Space. In particular, if we specialize a large number of points to lie on a rational normal curve, we see that being a Mori Dream Space is not an open condition. 
\end{remark}

One can ask for the finite/linear generation of $\Effb_k(X_{\Gamma}^n)$ for $\Gamma$ any special set of points. Perhaps the following question is the most interesting among them.

\begin{question}\label{ques-rnc}
Let $\Gamma$ be a set of points contained in a rational normal curve in $\P^n$. Is $\Effb_k(X_{\Gamma}^n)$ finitely generated? Is $\Effb_k(X_{\Gamma}^n)$  generated by the classes of cones over secant varieties of projections of the rational normal curve?
\end{question}
By results of Castravet and Tevelev, the answer to Question \ref{ques-rnc} is affirmative for curves and divisors. The cone of curves $\Effb_1(X_{\Gamma}^n)$ is generated by the class of the proper transform of the rational normal curve $n H_1 - \sum_{i=1}^r E_{i,1}$ and the classes of lines. The rational normal curve is cut out by quadrics. If a curve $B$ has positive intersection with a quadric containing the points, then by Lemma \ref{maininequality} the class of $B$ is spanned by the classes of lines. Otherwise, $B$ is contained in the base locus of all the quadrics containing the rational normal curve. Hence, $B$ is a multiple of the rational normal curve. 
Castravet and Tevelev show that the  classes of divisors are generated by linear spaces and codimension-$1$ cones over secant varieties of the projection of the rational normal curve \cite{CastravetTevelev}.  We do not know whether $\Effb_2(X_{\Gamma}^n)$ is generated by the classes of planes and cones over the rational normal curve with vertex a point of $\Gamma$.

\bibliographystyle{plain}

\begin{thebibliography}{ABCH}
\bibitem[BCHM]{BCHM} 
C. Birkar, P. Cascini, C. Hacon and J. McKernan. Existence of minimal models for varieties of log general type. 
{\it J. Amer. Math. Soc. }{\bf 23} (2010), no. 2, 405--468. 

\bibitem[CT]{CastravetTevelev}
A.M. Castravet and J. Tevelev, Hilbert's 14th problem and Cox rings, {\em Compositio Math.} {\bf 142} no. 6 (2006), 1479--1498.

\bibitem[CC]{ChenCoskun}
D. Chen, and I. Coskun, Extremal higher codimension cycles on moduli spaces of curves, {\em Proc. Lond. Math. Soc.} {\bf 111} no. 1 (2015), 181--204.

\bibitem[CM]{CilibertoMiranda}
C. Ciliberto and R. Miranda, Homogeneous interpolation on ten points, {\em J. Algebraic Geom.} {\bf 20} no.\ 4 (2011), 685--726.

\bibitem[Co]{Coble}
A. Coble, {\em Algebraic geometry and theta functions}.
Reprint of the 1929 edition. American Mathematical Society Colloquium Publications, 10. American Mathematical Society, Providence, R.I. 1982.

\bibitem[C1]{CoskunGW}
I. Coskun, Gromov-Witten invariants of jumping curves, Trans. Amer. Math. Soc., {\bf 360} (2008), 989--1004.

\bibitem[C2]{CoskunScrolls}
I. Coskun, Degenerations of surface scrolls and Gromov-Witten invariants of Grassmannians, {\em J. Algebraic Geom.} {\bf 15} (2006), 223--284.

\bibitem[DF]{DF}
T.~de Fernex, On the Mori cone of blow-ups of the plane,
arXiv:1001.5243.

\bibitem[DELV]{DELV}
O.~Debarre, L.~Ein, R.~Lazarsfeld, and C.~Voisin, Pseudoeffective and nef classes on abelian varieties, 
 {\em Compos. Math.} {\bf 147} (2011), no. 6, 1793--1818. 
 
\bibitem[DJV]{DebarreJiangVoisin}
O.~Debarre, Z.~Jiang, and C.~Voisin, Pseudo-effective classes and pushforwards, 
 {\em Pure Appl. Math. Q.} {\bf 9} (2013), no. 4, 643--664. 

\bibitem[Do]{Dolgachev}
I.~Dolgachev, Cremona special sets of points in products of projective spaces, \emph{Complex and differential geometry}, 115–134, Springer Proc. Math., 8, Springer, Heidelberg, 2011. 
 
\bibitem[FL1]{FulgerLehmannCone}
M.~Fulger, and B.~Lehmann, Positive cones of dual cycle classes, 
 arXiv:1408.5154. 

\bibitem[FL2]{FulgerLehmannKernel}
M.~Fulger, and B. Lehmann, Kernels of numerical pushforwards, arXiv:1407.6455.

\bibitem[Fu]{Fulton}
W.~Fulton, {\em Intersection theory}, Ergebnisse der Mathematik und ihrer Grenzgebiete, Springer-Verlag, Berlin, 1998. 

\bibitem[G]{Gimigliano}
A.~Gimigliano, On Linear Systems of Plane Curves, Ph.D.\ Thesis, Queen’s University, 1987.

\bibitem[Har]{Harbourne}
B. Harbourne, The geometry of rational surfaces and Hilbert functions of points in the plane. Can. Math. Soc. Conf. Proc. 6 (1986), 95--111.

\bibitem[H]{Harris}
J. Harris, {\em Algebraic geometry: A first course}, Graduate Texts in Mathematics, 133, Spring-Verlag, New York, 1995.

\bibitem[Ha]{Hartshorne}
R.~Hartshorne, {\em Algebraic geometry}, Graduate Texts in Mathematics, 52, Springer-Verlag, New York-Heidelberg, 1977.	 
   
\bibitem[Hi]{Hirschowitz}
A. Hirschowitz, Une conjecture pour la cohomologie des diviseurs sur les surfaces rationelles g\'{e}n\'{e}riques, J. Reine Angew. Math. {\bf 397} (1989), 208--213.

\bibitem[La]{Lazarsfeld} R. Lazarsfeld, {\it Positivity in algebraic geometry I, Classical setting: line bundles and linear series}, Springer-Verlag, 2004. 

\bibitem[Li]{Li} Q. Li, Pseudo-effective and nef cones on spherical varieties,  arXiv:1311.6791.

\bibitem[Mu]{Mukai} S.~Mukai, \emph{Geometric realization of $T$-shaped root systems and counterexamples to Hilbert's fourteenth problem.}  Algebraic transformation groups and algebraic varieties, 123--129,
\emph{Encyclopaedia Math. Sci.}, 132, Springer, Berlin, 2004. 

\bibitem[N]{Nagata}
M. Nagata, On the $14$-th problem of Hilbert, Amer. J. Math. {\bf 33} (1959), 766--772.

\bibitem[O1]{Ottem} J.C.~Ottem, On the Cox ring of $\mathbb P^2$ blown up in points on a line. \emph{Math. Scand.} 109 (2011), no. 1, 22--30. 

\bibitem[O2]{Ottem2} J.C.~Ottem, Nef cycles on some hyperk\"ahler fourfolds, arXiv:1505.01477.

\bibitem[T]{totaro}
B. Totaro. Hilbert's fourteenth problem over finite fields, and a conjecture on the cone of curves, Compos. Math. 144 (2008), 1176-1198. 

\end{thebibliography}

\end{document}